\theoremstyle{plain}
\numberwithin{equation}{section}
\newtheorem{theorem}{Theorem}[section]
\newtheorem{proposition}[theorem]{Proposition}
\newtheorem{lemma}[theorem]{Lemma}
\newtheorem{corollary}[theorem]{Corollary}
\theoremstyle{remark}
\newtheorem{remark}[theorem]{Remark}
\DeclarePairedDelimiterX\intff[2]{[}{]}{#1,#2}
\DeclarePairedDelimiterX\intfo[2]{[}{)}{#1,#2}
\DeclarePairedDelimiterX\intof[2]{(}{]}{#1,#2}
\DeclarePairedDelimiterX\intoo[2]{(}{)}{#1,#2}
\DeclarePairedDelimiter{\pars}{(}{)}
\DeclarePairedDelimiter{\absolute}{|}{|}
\DeclarePairedDelimiterX{\setof}[2]{\lbrace}{\rbrace}{#1\,{\colon}\,#2}
\DeclarePairedDelimiterX{\bracksof}[2]{[}{]}{#1\,\delimsize|\,#2}
\DeclarePairedDelimiterX{\parsof}[2]{(}{)}{#1\,\delimsize|\,#2}
\DeclarePairedDelimiterXPP\lnorm[2]{}\lVert\rVert{_{#1}}{#2}
\def\n{\mathbb N}
\def\z{\mathbb Z}
\def\r{\mathbb R}
\def\p{\mathbb P} %%Law wrt everything%%
\def\P{\mathbf P} %%law of BRW%%
\def\E{\mathbf E} %% expectation wrt BRW%%  
\def\RR{\mathfrak R}
\def\BScap{\mbox{\rm BScap}}%%Snake capacity%%
\def\Bcap{\mbox{\rm Bcap}}%%Discrete snake capacity
\def\ball{{\rm B}}  %%Ball in Rd or Zd%%
\def\brwrange{{\mathfrak R}}
\renewcommand{\Cap}{\mathrm{Cap}}
\newcommand{\rd}{\mathrm{d}}
\newcommand{\ind}{\mathbb{1}}
\newcommand{\cA}{\mathcal{A}}
\newcommand{\cK}{\mathcal{K}}
\newcommand{\cO}{\mathcal{O}} %%Open set%%
\newcommand{\open}{D}%\mathrm{O}} %%Open set%%
\newcommand{\cP}{\mathcal{P}}
\newcommand{\tauB}{T}
\newcommand{\cnbd}[2]{{{#1}^{#2}}}
\renewcommand{\ind}{\mathbf 1}
\newcommand{\dmin}{d_{\min}}
\newcommand{\phit}[1]{{\bf p}_K^{(#1)}} %%Escape probability \mathbf P_x(V_\alpha\cap K\ne\emptyset)
\def\adj{{\mathrm{adj}}}
\def\brwrange{{\mathscr R}}
\def\brwm{M_\theta} %%Transition matrix from general BRW to Bronwian snake%%
\def\coeffm{M_*}%%Transition matrix, version for the conditioned coupling Janson-Marckert%%
\def\pp{\mathbf P} \def\ee{\mathbf E} %law of the theta-random walk. determine if we use the same notation as law of the BRW or not.%
\newcommand{\ttree}[1]{\mathcal T_{#1}} %% Tree models %%
\newcommand{\tbrw}[2]{V_{#1}} %% BRW models %%
\def\I{\rm I} %% subscript for the model T_- \cup T_adj
\newcommand{\hdist}{d_H}
\def\diam{{\rm diam}} %diameter of K%%
\def\N{\mathcal N} %%Excursion measure for ISE%%%
\title{Branching capacity and Brownian snake capacity}
\date{\today}
\author{Tianyi Bai}
\address{Tianyi Bai, Chinese Academy of Sciences,   China.}
\email{tianyi.bai73@amss.ac.cn}
\author{Jean-Fran\c{c}ois Delmas}
\address{Jean-Fran\c{c}ois Delmas, CERMICS,
Ecole des Ponts,  France}
\email{delmas@cermics.enpc.fr}
\author{Yueyun Hu}
\address{Yueyun Hu, 
LAGA, Universit\'e Paris XIII, 99 av. J.B. Cl\'ement, 93430 Villetaneuse,
France}
\email{yueyun@math.univ-paris13.fr}
\begin{document}
\begin{abstract}
The branching capacity has been introduced by Zhu \cite{zhu2016critical}  as the limit of the hitting probability of a symmetric branching random walk in $\z^d$, $d\ge 5$. Similarly, we define the Brownian snake capacity  in $\r^d$, as the scaling limit of the hitting 
probability by the Brownian snake starting from afar. Then, we prove our main result on the vague convergence of the  rescaled branching capacity towards this Brownian snake capacity. 
Our proof relies on a precise convergence rate for the approximation of the branching capacity by hitting probabilities. 
\end{abstract}
    
\subjclass[2010]{60F05, 60J45, 60J80}

\keywords{Branching random walk, Brownian snake, Choquet capacity, branching capacity, Critical Galton-Watson tree.}

\maketitle

\section{Introduction}
Let  $\ttree  c$  be  a   critical  Galton-Watson  tree  with  offspring
distribution $\mu=(\mu(i))_{i\ge 0}$ with mean 1.   In other words, $\ttree  c$ is a
random  tree   that  begins  with   one  particle,  and   each  particle
independently produces a random number  of offspring according to $\mu$.
It is well-known that $\ttree c$ is
almost surely finite provided 
$\mu(1)< 1$.

Let $\theta$ be a probability distribution on $\z^d$. For $x\in\z^d$, we denote by $\P_x$ the law of  a branching random walk (BRW) $\tbrw c  x$ in $\z^d$,  which is  a  $\theta$-random walk  indexed by  the  tree $\ttree  c$
constructed as follows.  We pin the root  of $\ttree c$ at $x$, then to
each edge  of $\ttree  c$ we attach  an independent  random displacement
distributed  as $\theta$.  The position  of a  vertex of  $\ttree c$  is
defined as  $x$ plus the  sum of  all displacements associated  with the
edges in the simple path from the root to that vertex. The collection of
all (spatial) positions of vertices of $\ttree c$ is called the range of
$\tbrw c  x$ and  is denoted by  $\brwrange_c$. 
 
We consider  $d\ge 5$. For $x\in \r^d$, let $|x|$ denote its  usual Euclidean norm, and $\ball(x,r):= \{y \in \r^d: |y-x| <r\}$ the open ball centered at $x$ with radius $r> 0$. 
 Throughout the article, we shall  assume that 
\begin{align}
& \text{$\mu$ has mean $1$ and variance $\sigma^2\in (0, \infty)$},
\label{hyp-tree} 
\\
\label{hyp-brw} 
& \text{$\theta$ is symmetric, irreducible with covariance matrix $\brwm$
                   and}\\
&   \nonumber         \text{there exists a finite constant $c$ such that for all $r>0$:}  \quad \theta\big(\ball(0, r)^c\big) \le c\, r^{-d}.
\end{align} 
The last condition in~\eqref{hyp-brw}  is in particular satisfied when $\theta$ has  a  finite $d$-th moment.

\medskip

To introduce the branching capacity, we will use the Green function of a
random walk. Let $(S_n)_{n\ge 0}$ be a $\theta$-random walk  on $\z^d$, where $\theta$ serves as the step distribution of $(S_n)$. 
For $x\in \r^d$,  we also
consider its norm in relation with $(S_n)$ given by $
|x|_\theta:=\sqrt{x^T\brwm^{-1} x}$.  We define the Green function of $(S_n)$ by $g(x,y):=g(x-y)$ and for
$x\in \z$
\begin{equation}\label{c_g}
g(x):=\sum_{n=0}^\infty \pp_0(S_n=x) = c_{g}\, | x|_\theta^{2-d} + O(|x|^{1-d}), 
\end{equation}
with 
\[
c_{g}:= \frac{\Gamma(\frac {d-2} 2)}{2\pi^{d/2}\sqrt{\det\brwm}},
\]
where the asymptotic is due to Uchiyama \cite[Theorem 2]{MR1625467}. 
Let $K\subset \z^d$ be a nonempty finite set. Following Zhu
\cite{zhu2016critical}, we define the branching capacity of $K$ as
\begin{equation}
  \label{def-bcap}
  \Bcap(K):= \lim_{x \to\infty} \frac{\P_x(\brwrange_c \cap K\neq \emptyset)}{g(x)}
  =\frac1{c_{g}}\lim_{x \to\infty} | x|_\theta^{d-2}{\P_x(\brwrange_c \cap K\neq \emptyset)},   
\end{equation}
where $x\rightarrow\infty$ means  $|x|\rightarrow \infty $.
Moreover, $\Bcap(\cdot)$ viewed as a set function is non-decreasing, invariant under translations, and strictly positive when the set is not empty.

In our first  result we give 
a  rate of convergence  for \eqref{def-bcap}, which will  be
useful in the study of the branching capacity. 

\begin{theorem}\label{theo:compareBcap}
Let $d\ge 5$ and $\lambda>1$.
Assume \eqref{hyp-tree} and \eqref{hyp-brw}. 
There exists  a positive constant $C=C(d,\lambda)$ such that uniformly in  $r\ge 1$,  $K \subset \ball(0,r)$ nonempty and $x\in \z^d$ with $|x|\ge \lambda r$, we have, with  $\alpha=(d-4)/2(d-1)$,
\[
\Big|\Bcap(K)- \frac{ \P_x(\brwrange_c\cap K\neq \emptyset)}{g(x)} \Big| \le C   \left(\frac{r}{|x|}\right)^\alpha \, \Bcap(K).  
\]
\end{theorem}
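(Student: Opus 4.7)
The plan is to prove the estimate via a last-hit decomposition of the range $\brwrange_c$ combined with a second-moment control on the multiplicity of such last hits. Introduce
\[
N_K := \#\bigl\{v \in \ttree c \,:\, V_v \in K \text{ and } V_w \notin K \text{ for every strict descendant } w \text{ of } v\bigr\},
\]
so that $\{N_K \ge 1\} = \{\brwrange_c \cap K \neq \emptyset\}$. Applying the branching property together with the many-to-one formula for critical Galton--Watson trees yields
\[
\ee_x[N_K] \;=\; \sum_{y \in K} g(x-y)\,\phi_K(y), \qquad \phi_K(y) := \pp_y\bigl(\brwrange_c \cap K = \{y\}\bigr),
\]
together with the representation $\Bcap(K) = \sum_{y \in K} \phi_K(y)$ from~\cite{zhu2016critical}. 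Writing $\pp_x(\brwrange_c \cap K \neq \emptyset) = \ee_x[N_K] - \ee_x[(N_K-1)_+]$ splits the quantity to be bounded into a \emph{first-moment error} $\sum_y(1 - g(x-y)/g(x))\phi_K(y)$ and a \emph{multiplicity error} $\ee_x[(N_K-1)_+]/g(x)$.

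The first-moment error is handled directly by the Green function expansion \eqref{c_g}: for $|y|\le r$ and $|x|\ge \lambda r$ the estimate $|g(x-y)/g(x) - 1| \le C r/|x|$ gives a contribution of order $(r/|x|)\,\Bcap(K)$, which is sharper than the asserted rate and thus is not the binding constraint.

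The core of the argument is the multiplicity error, for which $\ee_x[(N_K-1)_+] \le \tfrac12 \ee_x[N_K(N_K-1)]$. Organizing ordered pairs of distinct last-hit vertices by their most recent common ancestor $w$ (whose two responsible children generate conditionally independent subtrees), the branching property and the many-to-one formula lead to
\[
\ee_x[N_K(N_K-1)] \;=\; \sigma^{2} \sum_{z \in \z^d} g(x-z)\,\bigl((\theta * \Psi)(z)\bigr)^{2}, \qquad \Psi(z) := \ee_z[N_K] = \sum_{y\in K} g(z-y)\phi_K(y).
\]
I would then split the $z$-sum at an intermediate radius $\rho \in [r,|x|/\lambda]$: on $\{|z|\ge \rho\}$, the decay $\Psi(z) \lesssim |z|^{2-d}\Bcap(K)$ combined with $g(x-z) \asymp g(x)$ (for $|z|\le |x|/2$; the tail $|z|>|x|/2$ is routine) produces a contribution of order $g(x)\,\rho^{4-d}\Bcap(K)^{2}$, while on $\{|z|<\rho\}$ one needs a refined near-field bound on $\Psi$, which I would derive by a bootstrap combining the crude potential bound $\Psi(z)\le g(0)\Bcap(K)$ with a hitting/overshoot estimate for the BRW reaching the $(d-1)$-dimensional sphere $\partial\ball(0,\rho)$. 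Balancing the two contributions via the optimal choice of $\rho$ is expected to produce the exponent $\alpha = (d-4)/(2(d-1))$, the $2(d-1)$ in the denominator reflecting a surface integration on $\partial\ball(0,\rho)$.

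The principal obstacle is precisely the near-field control of $\Psi$ and the accompanying scale optimization: the naive potential bound $\Psi\le g(0)\Bcap(K)$ is far from sharp, and the fractional $\alpha<1$ signals that a boundary/overshoot-type estimate---rather than a plain Green function decay---is what forces the final rate, with the second-moment term being the bottleneck throughout.
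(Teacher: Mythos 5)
Your starting identity is false, and the gap it opens is fatal to the whole strategy. The formula from \cite{zhu2016critical} is $\Bcap(K)=\sum_{a\in K}{\bf e}_K(a)$ with ${\bf e}_K(a)=\pp_a(\brwrange_-\cap K=\emptyset)$, an escape probability for the BRW indexed by the \emph{infinite} tree $\ttree-$ (a spine of adjoint trees), not $\sum_{y\in K}\pp_y(\brwrange_c\cap K=\{y\})$, which involves the finite critical tree. In fact your own decomposition shows these differ: from $\pp_x(N_K\ge1)=\ee_x[N_K]-\ee_x[(N_K-1)_+]$ and $\ee_x[N_K]=\sum_y g(x-y)\phi_K(y)$ one gets
\[
\Bcap(K)=\lim_{x\to\infty}\frac{\pp_x(N_K\ge1)}{g(x)}=\sum_{y\in K}\phi_K(y)-\lim_{x\to\infty}\frac{\ee_x[(N_K-1)_+]}{g(x)},
\]
and the subtracted limit is a \emph{strictly positive constant}, not an error term. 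Indeed, by your own second-moment formula, $\ee_x[N_K(N_K-1)]=\sigma^2\sum_z g(x-z)\bigl((\theta*\Psi)(z)\bigr)^2$ with $\Psi(z)\lesssim (1\vee|z|)^{2-d}$, so the sum over $z$ converges and is dominated by $z$ near $K$, giving $\ee_x[N_K(N_K-1)]/g(x)\to \sigma^2\sum_z(\theta*\Psi)(z)^2>0$. So your ``multiplicity error'' does not tend to $0$: it is of order $\Bcap(K)$ itself, and no choice of the intermediate radius $\rho$ or refinement of the near-field bound on $\Psi$ can make it $O((r/|x|)^\alpha)\Bcap(K)$. What the theorem actually requires is the \emph{rate of convergence} of $\ee_x[(N_K-1)_+]/g(x)$ to its (nonzero) limit, and the truncation $(N_K-1)_+\le\tfrac12 N_K(N_K-1)$ discards exactly the cancellations needed for that; a full inclusion--exclusion over all multiplicities would be needed, which does not truncate.

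The paper avoids this by resumming the multiplicity corrections exactly: it uses Zhu's identity $\phit c(x)=\sum_{a\in K}G_K(x,a)$, where $G_K$ is the Green function of the random walk killed at rate $\phit\adj$, together with a last-exit decomposition through a large ball $B$ and the exact formula $\Bcap(K)=\sum_{a\in K}\sum_{b\notin B}H^B_K(b,a)\,{\bf e}_K(b)$. The quantitative input is then a comparison $0\le 1-G_K(x,y)/g(x,y)\lesssim s^{-\alpha}$ (Lemma~\ref{lem:true_rs}), and the exponent $\alpha=(d-4)/(2(d-1))$ arises from balancing the exponents in that lemma and in the tail estimates for $H^B_K$ --- not from a surface integration on a sphere. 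If you want to salvage a moment-type argument, you would have to work with the killed walk $S^\kappa$ (equivalently, with the generating-function resummation encoded in $G_K$) rather than with the first two moments of $N_K$.
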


We do not claim that the value of $\alpha$ is optimal. 
We refer to Proposition \ref{c:compareBcap} and Proposition \ref{c:compareBcap2} for corresponding results on the branching models $\tbrw{\text{adj}}{}$, $\tbrw I{}$ and $\tbrw - {}$ defined in Section \ref{sec:tadj}.

Branching capacity is a relatively new concept defined and explored in a
series of  works by Zhu \cite{zhu2016critical,  zhu-2, zhu2021critical}.
In  particular, Zhu  \cite{zhu2016critical} showed  that $\Bcap(K)$  can
also be defined through the exit  probabilities from a BRW indexed by an
infinite  tree,   see  \eqref{Bcap-eK}.  Therefore,  Le   Gall  and  Lin
\cite[Proposition   3]{LeGall-Lin-range}'s   ergodicity   implies   that
$\Bcap(K)$ is the  almost sure limit of the rescaled  cardinality of the
sum of  $K$ and the  first $n$ points  in this BRW.   Recently, Asselah,
Schapira,  and Sousi  \cite{asselah2023local} studied  the link  between
branching capacity and Green's function.  In addition, when $(S_n)$ is a
simple  random  walk on  $\z^d$,  Asselah,  Okada, Schapira,  and  Sousi
\cite{AOSS}  demonstrated  that  $\Bcap(K)$  can be  compared  with  the
rescaled cardinality  of the  sum of  $K$ and the  Minkowski sum  of the
ranges of two  independent copies of $(S_n)$.  We also  mention a recent
study by Schapira \cite{Schapira}  and an ongoing work \cite{bdh-5d}
on the branching capacity of the range
of a random walk.

It is  a natural problem to  investigate the scaling limit  of branching
capacities. Loosely  speaking, the  branching capacity  is related  to a
critical  BRW  in  the  same  way as  the  (discrete)
Newtonian capacity does to a random  walk.  Therefore, we first define a
continuous counterpart  of $\Bcap$,  called Brownian snake  capacity, by
utilizing the  Brownian snake, which is the scaling limit of BRW.  The  introduction of the  Brownian snake
capacity aims to  establish a connection between  the branching capacity
and the  hitting probabilities  associated with  the Brownian  snake, as
investigated  by   Le  Gall  \cite{LeGall1994},  Dhersin   and  Le  Gall
\cite{dlg1997}, and Delmas \cite{Delmas99}.

Following Le  Gall \cite{LeGall1999}, let  $\mathcal N_x(\rd W)$  be the
excursion measure of a Brownian snake $W=(W_t)_{t\ge 0}$ started from $x\in
\r^d$, 
see Section~\ref{sec:BrwonianSnake} for  the precise definitions. Denote
by   $\mathfrak   R$   the   range    of   the   Brownian   snake   (see
\eqref{def-rangeBrownianSnake}).   In  the   next  theorem,   we  define
$\BScap(A)$ the Brownian snake capacity of $A$ for $d\geq 5$, and refer to 
Remark \ref{rmk:low_dim} for the case $d\leq 4$. 
\begin{theorem} \label{p:snakecap} 
Let $d\ge 5$. For any bounded Borel set $A \subset \r^d$, the following limit exists and is finite:
\[   \BScap(A):=\lim_{x\to\infty} |x|^{d-2}  \N_x(\RR\cap A\neq \emptyset). 
\] 
\end{theorem}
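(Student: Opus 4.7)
The strategy is to transfer the quantitative discrete bound of Theorem~\ref{theo:compareBcap} to the Brownian snake via the BRW-to-snake scaling limit, then close a Cauchy criterion in $x$. Since $\RR$ is $\N_x$-a.e.\ a compact random set, the functional $A\mapsto \N_x(\RR\cap A\neq \emptyset)$ is a Choquet capacity, inner regular on bounded Borel sets, so I would first prove existence of the limit for $A=K$ compact, and extend to bounded Borel $A$ by defining $\BScap(A):=\sup\{\BScap(K)\colon K\subset A,\ K\text{ compact}\}$ and sandwiching $\N_x(\RR\cap A\neq \emptyset)$ between hitting probabilities of inner compact approximations.

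\textbf{Scaling setup.} Fix a compact $K\subset \ball(0,r)$ and $x\in \r^d$ with $|x|>\lambda r$. For each $N\geq 1$, let $y_N\in \z^d$ be close to $\sqrt N\,x$ and let $K_N\subset \z^d$ be a lattice thickening of $\sqrt N\,K$, so that $|y_N|\sim \sqrt N\,|x|$, $K_N\subset \ball(0,r_N)$ with $r_N\sim \sqrt N\,r$, and $r_N/|y_N|\to r/|x|$. Invoking the classical scaling limit of critical BRW to the Brownian snake (summation over the heavy-tailed unconditioned GW-tree size), I would establish
\[
\P_{y_N}(\brwrange_c\cap K_N\neq \emptyset)\longrightarrow c_\star\,\N_x(\RR\cap K\neq \emptyset),\qquad N\to \infty,
\]
for an explicit constant $c_\star=c_\star(\mu,\theta,d)$, together with $N^{(2-d)/2}\Bcap(K_N)\to M(K)$ for some finite $M(K)\geq 0$, both limits being insensitive to the choice of lattice thickening because $\partial K$ is $\N_\cdot$-polar.

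\textbf{Closing the argument.} Multiplying the inequality of Theorem~\ref{theo:compareBcap} applied to $(K_N,y_N)$ by $g(y_N)\sim c_g\,N^{(2-d)/2}|x|_\theta^{2-d}$ gives
\[
\bigl|\Bcap(K_N)\,g(y_N)-\P_{y_N}(\brwrange_c\cap K_N\neq \emptyset)\bigr|\leq C\bigl(r/|x|\bigr)^\alpha \Bcap(K_N)\,g(y_N),
\]
with $\alpha=(d-4)/2(d-1)$. Passing to the limit $N\to \infty$ via the previous step then yields, for every $|x|>\lambda r$,
\[
\Bigl|c_g\,|x|_\theta^{2-d}\,M(K)-c_\star\,\N_x(\RR\cap K\neq \emptyset)\Bigr|\leq C\bigl(r/|x|\bigr)^\alpha c_g\,|x|_\theta^{2-d}M(K).
\]
Thus $|x|^{d-2}\N_x(\RR\cap K\neq \emptyset)$ is Cauchy as $|x|\to \infty$, with finite non-negative limit $\BScap(K):=c_g M(K)/c_\star$, and the reduction to compact sets finishes the general Borel case.

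\textbf{Main obstacle.} The delicate point is the scaling step: one must simultaneously establish the convergence of the hitting probability to $c_\star\,\N_x(\RR\cap K\neq \emptyset)$ \emph{and} the convergence of the rescaled $\Bcap(K_N)$ with \emph{the same} constant $c_\star$, which requires careful handling of the GW tree-size tail together with polar-set estimates for the Brownian snake near $\partial K$. Once this is in place, the uniform rate $(r/|x|)^\alpha$ of Theorem~\ref{theo:compareBcap} automatically propagates to the continuum and closes the Cauchy criterion in $x$.
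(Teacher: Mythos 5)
Your route (transferring Theorem~\ref{theo:compareBcap} to the continuum via the BRW--to--snake scaling limit and closing a Cauchy criterion in $x$) is entirely different from the paper's, which never discretizes: the paper applies It\^o's formula to $\psi u_A$, uses that $u_A$ solves $\Delta u=4u^2$ outside $\overline A$ (Lemma~\ref{lem:uA}) to obtain the explicit representation $2u_A(x)=-c_d\int(4\psi u_A^2+f)(y)|x-y|^{2-d}\,\rd y$, and then reads off the limit by dominated convergence. Unfortunately your scheme has genuine gaps, the central one being the ``scaling step'' that you yourself flag as the main obstacle.

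First, the claimed convergence $\P_{y_N}(\brwrange_c\cap K_N\neq\emptyset)\to c_\star\,\N_x(\RR\cap K\neq\emptyset)$ is not available for a general compact $K$. Even after fixing the normalization (the hitting probability decays like $N^{-1}$, so it must be multiplied by $N$; likewise $\Bcap(K_N)\asymp N^{(d-4)/2}$ by \eqref{upp-bcapK}, so your normalization $N^{(2-d)/2}\Bcap(K_N)$ forces $M(K)=0$ and degenerates the final inequality), the scaling limit only yields one-sided bounds: a $\limsup$ against $\N_x(\RR\cap K^\varepsilon\neq\emptyset)$ and a $\liminf$ against $\N_x(\RR\cap K\neq\emptyset)$ \emph{after} thickening the discrete target by $\varepsilon n$ --- this is exactly why Theorem~\ref{prop:A^epsilon} is stated as a $\limsup$ for compacts and a $\liminf$ for opens rather than as a pointwise limit. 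Your justification that the two coincide ``because $\partial K$ is $\N_\cdot$-polar'' is false: polar means $\Cap_{d-4}=0$, and the boundary of, say, a closed ball is a $(d-1)$-dimensional set of positive $\Cap_{d-4}$-capacity; more to the point, there are compacts (e.g.\ with empty interior) for which the limit of the discrete hitting probabilities genuinely exceeds $c_\star\N_x(\RR\cap K\neq\emptyset)$. Second, the companion limit $N^{(4-d)/2}\Bcap(K_N)\to M(K)$ is asserted without proof, and in the paper it is a \emph{consequence} of Theorem~\ref{p:snakecap} combined with Theorem~\ref{theo:compareBcap} (that is the content of Theorem~\ref{prop:A^epsilon}); assuming it here is essentially circular, and again it only holds in the limsup/liminf sense for general compacts. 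Finally, even granting the compact case, your reduction from a bounded Borel set $A$ to inner compacts requires interchanging $\sup_{K\subset A}$ with $\lim_{x\to\infty}$, i.e.\ a uniform-in-$x$ inner approximation of $|x|^{d-2}u_A(x)$ by $|x|^{d-2}u_K(x)$, which you do not provide. A workable version of your strategy would have to restrict to regular compacts (in the spirit of condition \eqref{eq:d-2_condition}) and then face a nontrivial extension problem; the paper's PDE argument sidesteps all of this.
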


In the next  proposition we state some properties of  the Brownian snake
capacity.   Choquet   capacities  are   precisely  defined   in  Section
\ref{sec:capacity}, see also~\eqref{eq:sub+} for the strong sub-additive
property; the Riesz capacity $\Cap_{d-\alpha}$ for $\alpha\in (0, d)$ is
defined  in  Remark \ref{rem:cap_d=cap}  below;  the  closure of  a  set
$A\subset\r^d$ is denoted by $\overline A$.

\begin{proposition}\label{prop:propertiesBScap}
  Let $d\ge 5$.
\begin{enumerate}
\item\label{it:BS=cap}
  \textbf{Choquet capacity.} The map $\BScap$ can be extended into a strongly sub-additive Choquet
capacity on $\cP(\r ^d)$,   the power set of $\r^d$. 

\item\label{it:BS=scaling}
  \textbf{Scaling and translation.}
For any $x\in\r^d$, $a\in \r$ and $A\subset \r^d$, we have
\begin{equation}\label{eq:BScap_scaling}
\BScap(x+aA)=a^{d-4}\, \BScap(A).
\end{equation}
\item\label{it:BS=d-4}
  \textbf{Comparison with  $\Cap_{d-4}$.}
There exist finite positive constants $c_1$ and $c_2$ such that for any $A\subset \r^d$
\begin{equation}\label{eq:BScap_Cap_d-4}
c_1\Cap_{d-4}(A)\le \BScap(A)\le c_2\Cap_{d-4}(A).
\end{equation}
\item \label{it:BS=reg}
  \textbf{Regularity.}  Let $\open\subset\r^d$ be a  bounded open set. If for all $y\in
\partial_{\text{ext}}\open$,  that is,  the boundary of the unbounded connected component of $D^c$,
\begin{align}\label{eq:d-2_condition}
\liminf_{n\rightarrow \infty } 2^{n(d-2)} \, \Cap_{d-2} \left(\open  \cap
   \ball(y, 2^{-n}) \right)>0,
\end{align}
then, we have
\[
\BScap(\open)=\BScap(\overline{\open}).
\]

\item\label{it:BS=6}
  \textbf{Particular value.} For $d=6$, we have
  $\BScap(\ball(0,1))=6$. 
\end{enumerate}
\end{proposition}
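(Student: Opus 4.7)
The plan is to treat the five items separately, writing $u(x,A):=\N_x(\RR\cap A\neq\emptyset)$ throughout and relying on the well-known link between $u(\cdot,A)$ and the semilinear equation $\Delta v=4v^2$ (Le Gall \cite{LeGall1999}).

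For \emph{(1)}, strong sub-additivity of $u(x,\cdot)$ on bounded Borel sets is an elementary consequence of the set identities $\{\RR\cap(A\cup B)\neq\emptyset\}=\{\RR\cap A\neq\emptyset\}\cup\{\RR\cap B\neq\emptyset\}$ and $\{\RR\cap(A\cap B)\neq\emptyset\}\subset\{\RR\cap A\neq\emptyset\}\cap\{\RR\cap B\neq\emptyset\}$, which give
\[
u(x,A\cup B)+u(x,A\cap B)\le u(x,A)+u(x,B);
\]
multiplying by $|x|^{d-2}$ and passing to the limit transfers this to $\BScap$. To promote $\BScap$ to a Choquet capacity on $\cP(\r^d)$, I would verify, at fixed $x$, continuity from below along Borel $A_n\uparrow A$ (by monotone convergence of $\ind_{\{\RR\cap A_n\neq\emptyset\}}$) and continuity from above along compact $K_n\downarrow K$ (using the $\N_x$-a.s.\ compactness of $\RR$), and then apply Choquet's extension theorem. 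For \emph{(2)}, translation invariance of $\N_x$ handles $x+A$; for the scaling, the PDE characterization shows that $a^{-2}\,u(\cdot/a,A)$ is the maximal non-negative solution of $\Delta v=4v^2$ on $(aA)^c$ vanishing at infinity, hence equals $u(\cdot,aA)$. Substituting $y=az$ in the defining limit and collecting powers of $a$ yields $\BScap(aA)=a^{d-4}\BScap(A)$.

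For \emph{(3)}, I would invoke the hitting estimate of Dhersin--Le Gall \cite{dlg1997}: for compact $K$ and $|x|$ large compared with $\diam(K)$,
\[
c_1\,|x|^{-(d-2)}\Cap_{d-4}(K)\le u(x,K)\le c_2\,|x|^{-(d-2)}\Cap_{d-4}(K).
\]
Multiplying by $|x|^{d-2}$ and letting $|x|\to\infty$ yields \eqref{eq:BScap_Cap_d-4} on compacts, and the Choquet extension from (1) passes the two-sided comparison to arbitrary $A\subset\r^d$. For \emph{(4)}, monotonicity gives $\BScap(\open)\le\BScap(\overline{\open})$; for the reverse I would split
\[
\N_x(\RR\cap\overline{\open}\neq\emptyset)=\N_x(\RR\cap\open\neq\emptyset)+\N_x(\RR\cap\overline{\open}\neq\emptyset,\;\RR\cap\open=\emptyset),
\]
and argue that the second term is $o(|x|^{-(d-2)})$. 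On the latter event, by connectedness of $\RR$ the range must touch some $y\in\partial_{\mathrm{ext}}\open$ without entering $\open$; condition~\eqref{eq:d-2_condition} is a quantitative Wiener criterion ensuring that $y$ is regular for $\open$ with respect to Brownian motion, i.e.\ $\p_y(T_\open=0)=1$, and any Brownian snake path $W_s$ passing close to $y$ behaves locally like a Brownian motion starting at $y$, so at least one such descendant path must enter $\open$ almost surely. This contradiction gives $\BScap(\open)=\BScap(\overline{\open})$.

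For \emph{(5)} with $d=6$, the function $u(x):=u(x,\ball(0,1))$ is radial, $u(x)=f(|x|)$, and is the maximal non-negative solution of $\Delta u=4u^2$ on $\{|x|>1\}$ blowing up on the unit sphere and vanishing at infinity. Trying the ansatz $f(r)=a(r^2-1)^{-2}$, a direct computation gives
\[
\Delta f=\frac{4a(6-d)r^2+4ad}{(r^2-1)^4},\qquad 4f^2=\frac{4a^2}{(r^2-1)^4},
\]
which for $d=6$ reduces to $24a=4a^2$, forcing $a=6$; the asymptotic $u(x)\sim 6|x|^{-4}$ as $|x|\to\infty$ then gives $\BScap(\ball(0,1))=6$. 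I expect the main obstacle to be part~\emph{(4)}: carefully linking the hitting of $\overline{\open}$ by the entire range $\RR$ to the hitting of $\open$ by individual snake paths, using only the $\Cap_{d-2}$ Wiener condition at exterior boundary points while handling the branching and connectedness of $\RR$ rigorously.
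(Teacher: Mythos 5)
Your items (1)--(3) follow essentially the paper's route: strong sub-additivity from the inclusion--exclusion identities, the Dellacherie--Meyer/Choquet extension from compacts, the scaling relation $u_A(x)=a^2u_{aA}(ax)$ (the paper gets it directly from the excursion-measure scaling rather than via maximality of solutions of $\Delta v=4v^2$, which is safer since the maximality characterization is only stated for compact $A$), and the two-sided bound of Dhersin--Le Gall for the comparison with $\Cap_{d-4}$. Your computation in (5) is the correct explicit solution $6(r^2-1)^{-2}$; the only thing to add is a justification that this blow-up solution vanishing at infinity is indeed the \emph{maximal} solution in $\{|x|>1\}$, i.e.\ that it coincides with $u_{\ball(0,1)}$ (the paper handles this through the series expansion of the maximal radial solution and the identification of $a_0$).

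The genuine gap is in item (4). You reduce to showing $\N_x(\RR\cap\overline{\open}\neq\emptyset,\ \RR\cap\open=\emptyset)=0$, which is the right target, but the mechanism you propose --- classical Brownian-motion regularity of each $y\in\partial_{\text{ext}}\open$ plus the heuristic that ``descendant paths must enter $\open$'' --- is not a proof and invokes the wrong Wiener test. A historical path $W_s$ with $\widehat W_s=y$ is a path \emph{stopped} at $y$; whether the snake subsequently enters $\open$ is governed by the sub-snakes grafted near time $s$, and the relevant criterion is the Dhersin--Le Gall Wiener-type test for the \emph{Brownian snake}, a dyadic-shell series involving $\Cap_{d-4}$, not the $\Cap_{d-2}$ test for Brownian motion. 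This is precisely why the hypothesis is the quantitative bound $\liminf_n 2^{n(d-2)}\Cap_{d-2}(\open\cap\ball(y,2^{-n}))>0$ rather than mere divergence of the Wiener series: the paper uses it, together with $\Cap_{d-4}\ge\Cap_{d-2}$ on small sets and sub-additivity over shells, to deduce the divergence of the snake series and hence \emph{super-regularity}, namely $\N_y(\RR\cap\open\neq\emptyset)=\infty$ for every $y\in\partial_{\text{ext}}\open$. The second missing ingredient is how to pass from boundary points to the starting point $x$: the paper introduces the exit measure $X^{\open'}$ of the unbounded component $\open'$ of $\overline{\open}^{\,c}$, shows via the special Markov property that $\N_x(\RR\cap\overline{\open}\neq\emptyset)=\N_x(X^{\open'}\neq0)$, and then uses super-regularity (plus Dynkin's lemma) to get $\N_x(\RR\cap\open\neq\emptyset)=\N_x(X^{\open'}\neq0)$ as well. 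Without the exit-measure/special-Markov machinery and the snake (not Brownian-motion) regularity criterion, your argument for (4) does not close.
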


The  scaling and  translation properties  of the  capacity $\BScap$  are
consequences  of  its  definition  and the  scaling  properties  of  the
Brownian  snake; the  comparison  with the  Riesz capacity  $\Cap_{d-4}$
comes  from  estimates  in~\cite{dlg1997},  but it  is  unclear  whether
$\BScap$ and $\Cap_{d-4}$  are equal (up to  a multiplicative constant);
the regularity of $\BScap$ on open sets relies on uniqueness of solution
of  $\Delta u=  4u^2$ on  open  set with  infinite boundary  conditions,
see~\cite{MR1341844} (this has  to be compared with the  Wiener test for
Brownian                 motion                 given                 by
$\sum_{n=0}^\infty  2^{n(d-2)}  \Cap_{d-2}   \left(\open  \cap  \ball(y,
  2^{-n})  \right)=\infty  $),   as  mentioned  in~\cite{MR1341844}  the
condition~\eqref{eq:d-2_condition} is  satisfied when  $D$ has  a Lipschitz
boundary; we are able to compute  the capacity of the balls in dimension
6 by  giving an explicit  formula for the  solution of $\Delta  u= 4u^2$
outside the unit  ball $\ball(0, 1)$ with  infinite boundary conditions,
see Remark~\ref{rem:u1} and the new formula~\eqref{eq:u-for-d6} therein.

\medskip

We refer to \cite{asselah2023local} for the analogue of
\eqref{eq:BScap_Cap_d-4} for branching capacity on finite subsets of $\z^d$ in the case when
$\theta$ has bounded jumps.
To state our next result we extend the capacity $\Bcap$ on $\z^d$ as a Choquet capacity on
$\r^d$ by setting $\Bcap(A)=\Bcap(A \cap \z^d)$ for all $A\subset \r^d$. 
For every set  $A\subset\r^d$ and $\varepsilon>0$, denote 
by  $\cnbd{A}{\varepsilon}=\setof*{x}{d(x,A)\le\varepsilon}$ the  closed
$\varepsilon$-neighborhood of $A$,  where $d(x,A):=\inf_{y\in  A}  |x-y|$.

On exploring the  link between  the discrete  branching capacity  and the  continuous
Brownian snake capacity, we have the following result.
Let $\cK$ and $\cO$ denote the families of compact and open subsets of
$\r^d$. 
\begin{theorem}\label{prop:A^epsilon}
  Let $d\ge 5$.
  Assume \eqref{hyp-tree} and \eqref{hyp-brw}.  
  We have
\begin{align} 
  \label{upper:nK}
  \limsup_{n\rightarrow\infty}\frac{\Bcap(nK)}{n^{d-4}} \le
  c_\theta\,\BScap(\brwm^{-1/2}K) \quad\text{for}\quad
  K\in \cK, 
  \\
  \label{lower:Open}
\liminf_{n\rightarrow\infty}\frac{\Bcap(n\open)}{n^{d-4}}\ge
  c_\theta\,\BScap(\brwm^{-1/2}\open)
  \quad\text{for}\quad
  \open\in \cO, 
\end{align}
with
\[
c_\theta:= \frac {2}
{\sigma^2{c_{g}}}= \frac {4\pi^{d/2}\, \sqrt{\det\brwm}}
{\sigma^2\, {\Gamma(\frac{d-2}{2})}}\cdot
\]
\end{theorem}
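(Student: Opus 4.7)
The plan is to combine the rate of convergence in Theorem~\ref{theo:compareBcap} with a BRW-to-Brownian-snake invariance principle at the level of hitting probabilities, and then close using the defining limit of $\BScap$ in Theorem~\ref{p:snakecap}.

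\medskip

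\emph{Reduction to a rescaled hitting probability.} Let $A\subset\ball(0,R)$ stand for either the compact $K$ or the open $\open$. Fix $\lambda>1$ (to be sent to $\infty$ eventually) and $z\in\r^d$ with $|z|>\lambda R$, and set $x_n=\lfloor nz\rfloor$. For $n$ large enough, $|x_n|\ge \lambda\cdot nR$, and Theorem~\ref{theo:compareBcap} applied with $r=nR$ yields
\[
\frac{1}{1+C\lambda^{-\alpha}}\cdot\frac{\P_{x_n}(\brwrange_c\cap nA\neq\emptyset)}{g(x_n)}\le\Bcap(nA)\le\frac{1}{1-C\lambda^{-\alpha}}\cdot\frac{\P_{x_n}(\brwrange_c\cap nA\neq\emptyset)}{g(x_n)}.
\]
Using the asymptotic $g(x_n)=c_g\,n^{2-d}|z|_\theta^{2-d}(1+o(1))$ from~\eqref{c_g} and dividing by $n^{d-4}$, the problem reduces to controlling $n^{2}\,\P_{\lfloor nz\rfloor}(\brwrange_c\cap nA\neq\emptyset)$ as $n\to\infty$.

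\medskip

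\emph{Scaling limit for the BRW hitting probability.} The core step is to establish the convergence
\[
n^{2}\,\P_{\lfloor nz\rfloor}(\brwrange_c\cap nA\neq\emptyset)\ \longrightarrow\ \frac{2}{\sigma^{2}}\,\N_{\brwm^{-1/2}z}\bigl(\RR\cap \brwm^{-1/2}A\neq\emptyset\bigr).
\]
The transformation $\brwm^{-1/2}$ absorbs the matching between the BRW step covariance $\brwm$ and the $\brwm^{1/2}$-Brownian spatial motion of the snake; the factor $2/\sigma^{2}$ arises from the Aldous-type rescaling of the critical Galton--Watson contour function of offspring variance $\sigma^{2}$ to reflected Brownian motion; and the prefactor $n^{2}$ reflects the heavy tail of the GW tree size, matching the infinite mass of $\N_x$ against the probability $\P_x$. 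For compact $A=K$ this should hold as $\limsup_n\le$, through upper Painlev\'e--Kuratowski convergence of the rescaled BRW range to $\RR$ combined with continuity from above of the Choquet capacity $B\mapsto\N_y(\RR\cap B\neq\emptyset)$ (enlarge $K$ by an $\eta$-neighborhood, then send $\eta\downarrow 0$). For open $A=\open$ it should hold as $\liminf_n\ge$, through inner approximation of $\open$ by compact subsets and the Choquet property of the snake capacity.

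\medskip

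\emph{Passage to $\BScap$ and conclusion.} Substitute the scaling limit into the sandwich and let $n\to\infty$, then $|z|\to\infty$. Writing $y=\brwm^{-1/2}z$ so that $|y|=|z|_\theta$, Theorem~\ref{p:snakecap} gives
\[
|z|_\theta^{d-2}\,\N_y\bigl(\RR\cap \brwm^{-1/2}A\neq\emptyset\bigr)\longrightarrow\BScap(\brwm^{-1/2}A).
\]
Finally send $\lambda\to\infty$ so $C\lambda^{-\alpha}\to 0$; the prefactors consolidate into $c_\theta=2/(\sigma^{2}c_g)$, yielding~\eqref{upper:nK} and~\eqref{lower:Open}. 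The main obstacle is the invariance principle at hitting-probability level: one needs quantitative control of the rescaled BRW range's convergence to $\RR$ (not merely of the underlying tree and snake separately), together with careful handling of boundary and lattice effects so that the upper- and lower-semicontinuity arguments close cleanly on either side of the sandwich.
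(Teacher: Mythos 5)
Your overall architecture coincides with the paper's: sandwich $\Bcap(nA)$ between hitting probabilities via Theorem~\ref{theo:compareBcap}, identify the scaling limit of $n^{2}\,\P_{\lfloor nz\rfloor}(\brwrange_c\cap nA\neq\emptyset)$, and then invoke Theorem~\ref{p:snakecap} together with the scaling identity $\N_{\coeffm^{-1}z}(\RR\cap\coeffm^{-1}A\neq\emptyset)=\frac{2}{\sigma}\N_{\brwm^{-1/2}z}(\RR\cap\brwm^{-1/2}A\neq\emptyset)$ — your constant $2/\sigma^{2}$ and the one-sided treatment of compacts (outer $\varepsilon$-enlargement, then continuity along decreasing compacts) versus opens (inner approximation by compacts, capacitability) are all correct and match the paper. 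However, there is a genuine gap: the step you yourself flag as ``the main obstacle,'' namely the convergence $n^{2}\,\P_{\lfloor nz\rfloor}(\brwrange_c\cap nA\neq\emptyset)\to\frac{2}{\sigma^{2}}\N_{\brwm^{-1/2}z}(\RR\cap\brwm^{-1/2}A\neq\emptyset)$, is asserted with a heuristic for the constants but never proved, and it is where essentially all of the work lies. A generic ``upper Painlev\'e--Kuratowski convergence of the rescaled BRW range'' does not exist unconditionally: the unconditioned critical BRW dies immediately with probability $\mu(0)$ and has no deterministic space-time scaling, so one cannot compare $\brwrange_c$ directly to $\RR$ under the infinite measure $\N$.

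The paper fills this gap by conditioning on the total progeny. One decomposes $\P_{\lfloor nz\rfloor}(\brwrange_c\cap nA\neq\emptyset)=\sum_j\P(\cdots,\#\ttree c=j)$ and splits the sum at $\delta n^{4}$ and $\delta^{-1}n^{4}$. The local limit theorem for the total progeny (Dwass plus the LCLT) gives $\P(\#\ttree c=j)\sim(\sigma\sqrt{2\pi})^{-1}j^{-3/2}$; the tail $j>\delta^{-1}n^{4}$ is then $O(\delta^{1/2}n^{-2})$, and the regime $j<\delta n^{4}$ is killed using a conditional moment bound on $\max_{y\in\brwrange_c}|y|$ given $\#\ttree c=j$ (a small tree cannot reach distance $n|z|/3$). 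In the bulk regime one applies the Janson--Marckert coupling \eqref{thm:jm05} for the range \emph{conditioned on} $\#\ttree c=j$, converting the sum over $j$ into $\int_0^\infty s^{-3/2}\,\N_0^{(1)}(\dmin(s^{1/4}\RR,\coeffm^{-1}(A-z))\le\varepsilon)\,\rd s$, which is reassembled into $\sqrt{2\pi}\,\N_0(\dmin(\RR,\coeffm^{-1}(A-z))\le\varepsilon)$ via the It\^o-measure decomposition \eqref{eq:scaling-ISE}. Without this conditioning scheme (or an equivalent quantitative substitute), the sandwich you set up cannot be closed, so as written the proposal is an accurate plan whose central step remains unestablished.
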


In the setting of Norberg \cite{norberg},
Equations~\eqref{upper:nK}-\eqref{lower:Open} correspond to the vague
convergence of  the capacities
$n^{-(d-4)} \Bcap(n \, \cdot\, )$ towards  the capacity
$c_\theta\BScap(\brwm^{-1/2}\, \cdot\, )$.

\begin{remark}
If $\theta$ is the uniform probability  distribution on  the $2d$ unit
vectors in $\z^d$, then $\brwm=\frac 1 d I$, and for any set $A\subset \r^d$ we have
\[
  c_\theta\, \BScap(\brwm^{-1/2}    A    )   =    \frac{2\pi^{d/2}}{\sigma^2
    \Gamma(\frac{d}{2}) }\, \BScap(A).
\]
\end{remark}

The paper  is organized as follows.   In Section \ref{sec:proof-BS-cap},
we   explore   the  Brownian   snake,   and   provide  proofs  for   Theorem
\ref{p:snakecap} and Proposition  \ref{prop:propertiesBScap}. In Section
\ref{sec:convergence4},   we  prove   Theorem  \ref{prop:A^epsilon}  by assuming
Theorem \ref{theo:compareBcap}.  And  finally we complete the
proof of Theorem \ref{theo:compareBcap} in Section~\ref{s:appendix}.

\medskip

For notational brevity, let $f_\delta(x),g_\delta(x)$ be two nonnegative functions
depending on some  parameter $\delta$, we write  $f_\delta(x)\lesssim g_\delta(x)$ when
there  is a  constant  $C>0$,  independent of  $x$  and  $\delta$, such  that
$f_\delta(x)\le  Cg_\delta(x)$.   We  also   denote  $f_\delta(x)\asymp   g_\delta(x)$  when
$f_\delta(x)\lesssim g_\delta(x)$ and $g_\delta(x)\lesssim
f_\delta(x)$.

We  write $\n=\z\cap [0, \infty )$ the set of non-negative integers.  We
denote  by $\diam(A):=\sup_{x,y\in A} |x-y|$  the diameter of $A\subset \r^d$.

\section{The Branching and the Brownian snake  capacities}
\label{sec:proof-BS-cap}

\subsection{Choquet capacity}
\label{sec:capacity}

 Let $(X,\cO)$  be an Hausdorff topological space,
$\mathcal  P(X)$ be  its power  set.  Let  $\cA\subset \cP(X)$  be 
stable  by   finite  union   and  intersection.  We   say  that   a  map
$I:\cA\rightarrow[0, +\infty ]$ is non-decreasing if:
\[
  I(A)\leq  I(B)
  \quad\text{for}\quad
  A, B\in \cA
  \quad\text{such that}\quad
  A  \subset B,
\]
and is strongly sub-additive if
\begin{equation}
  \label{eq:sub+}
  I(A\cup B) + I(A\cap B) \leq  I(A)+I(B)
  \quad\text{for}\quad
  A, B\in \cA. 
\end{equation}

Let  $\cK\subset\mathcal  P(X)$ denote the
set of  compact sets (which   is  stable by finite  union and
intersection).
Following                  Definition~27
in~\cite[Section~III.2]{dellacherie-meyer},             a            map
$I:\cP(X)\rightarrow[0, +\infty  ]$ is  a Choquet capacity  (relative to
$\cK$) if:
\begin{enumerate}[(i)]   
\item $I$ is non-decreasing. 
\item If $(A_n)_{n\in \n}$ is a non-decreasing sequence of subsets of $X$,  then 
\[
  I(\cup_{n} A_n)= \sup_{n} I(A_n).
\]
\item \label{it:cap-Kdec}
  If $(K_n)_{n\in \n}$ is a non-increasing sequence in $\cK$,
  then
\[
  I(\cap_{n} K_n)= \inf_{n} I(K_n).
\]
\end{enumerate}
A set $A\subset X$ is capacitable (with respect to $I$) if:
\begin{equation}
   \label{eq:capacitable}
  I(A)=\sup_{K\in \cK, \, K\subset A} I(K). 
\end{equation}
 
Assume that $(X,\cO)$ is a second countable Hausdorff locally compact
space. Then according to~\cite[Theorem~III.13]{dellacherie-meyer} and the comment
above  therein, we get that Borel
sets are  analytic and thus capacitable by Choquet theorem (see also
Theorem~28 therein).

\medskip

Given a map $J:\cK\rightarrow[0,\infty]$ defined only  on the compact sets, we
can  define  a  function $J^*:\mathcal  P(X)\rightarrow  [0,\infty]$  as
follows.  For every open set $\open\in \cO$, we set
\begin{equation}\label{eq:choquet}
  J^*(\open)=\sup_{K\in \cK, \,  K\subset \open} J(K),
\end{equation}
and for any $A\subset X$,
\begin{equation} \label{eq:choquet-borel}
   J^*(A)= \inf_{\open \in \cO , \,  A \subset \open}
   J^*(\open).
\end{equation}

  \begin{remark}
  \label{rem:sub-+J}
  It is not difficult, using \cite[Lemme~p.100]{dellacherie-meyer}, 
  to check that if $J$ is strongly sub-additive on
  $\cK$, then $J^*$ is also strongly sub-additive on
  $\cP(X)$. 
\end{remark}

The next theorem from \cite{dellacherie-meyer} states that $J^*$ extends $J$ into a Choquet capacity. We say that the map $J$ is
right continuous if  for every  $K\in \cK$
and every real  number
$a>  J(K)$,  there  exists  an  open  set  $\open$  such  that
$K\subset \open$ and  $ J(K')<a$ for every compact set $K'\subset
\open$.

\begin{theorem}[{\cite[Theorem~III.42]{dellacherie-meyer}}]
\label{theo:ext-J}
Let    $J:\cK\rightarrow[0,\infty]$    be    non-decreasing,    strongly
sub-additive  and  right continuous. 
Then the map $J^*$ is a Choquet  capacity (relative to $\cK$), and it coincides
with $J$ on $\cK$.  
\end{theorem}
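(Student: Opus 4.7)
The plan is to verify that the extension $J^*$ defined by~\eqref{eq:choquet}--\eqref{eq:choquet-borel} satisfies $J^*|_{\cK}=J$ together with the three defining properties (i)--(iii) of a Choquet capacity on $\cP(X)$.

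First I would check that $J^*=J$ on $\cK$. For $K\in\cK$ and any open set $\open\supset K$, the inclusion $K\subset \open$ with $K\in\cK$ gives $J^*(\open)\ge J(K)$ by~\eqref{eq:choquet}, and hence $J^*(K)\ge J(K)$ by~\eqref{eq:choquet-borel}. Conversely, right continuity of $J$ produces, for any $a>J(K)$, an open set $\open\supset K$ such that $J(K')<a$ for every compact $K'\subset \open$; therefore $J^*(\open)\le a$, and letting $a\downarrow J(K)$ yields $J^*(K)\le J(K)$.

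Next, monotonicity (property (i)) is immediate from the definitions. For continuity from above on $\cK$ (property (iii)), one uses the standard fact in a Hausdorff locally compact space that, for a non-increasing sequence $(K_n)\subset\cK$ with $K=\cap_n K_n$, every open neighborhood of $K$ eventually contains some $K_n$ (if not, compactness of $K_1$ yields a cluster point of a sequence $x_n\in K_n$ outside the neighborhood, a contradiction). Combined with $J^*|_{\cK}=J$, this gives $\inf_n J(K_n)\le J^*(K)$, and the reverse inequality is monotonicity.

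The main obstacle is continuity from below on arbitrary sets (property (ii)). I would first handle open sets: if $(\open_n)$ is non-decreasing with union $\open$, then any compact $K\subset \open$ is contained in some $\open_n$ by compactness, hence $J^*(\open)=\sup_n J^*(\open_n)$. For an arbitrary non-decreasing sequence $(A_n)$ with union $A$, fix $\varepsilon>0$ and choose open sets $\open_n\supset A_n$ with $J^*(\open_n)\le J^*(A_n)+\varepsilon 2^{-n}$. Setting $\cU_n=\open_1\cup\cdots\cup \open_n$, the strong sub-additivity of $J^*$ on $\cP(X)$ (Remark~\ref{rem:sub-+J}) applied to $\cU_{n-1}$ and $\open_n$, together with $A_{n-1}\subset \cU_{n-1}\cap \open_n$, yields the inductive bound $J^*(\cU_n)\le J^*(A_n)+\varepsilon\sum_{k=1}^n 2^{-k}$. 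Since $\cU_n\uparrow \cup_n\open_n\supset A$, the already proved open-set case gives $J^*(A)\le J^*(\cup_n\open_n)=\sup_n J^*(\cU_n)\le \sup_n J^*(A_n)+\varepsilon$, and $\varepsilon\downarrow 0$ concludes.
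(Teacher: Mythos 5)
The paper does not prove this statement — it is quoted directly from \cite{dellacherie-meyer} (Theorem~III.42) — so there is no in-paper argument to compare against. Your proof is a correct rendition of the standard one: the identification $J^*|_{\cK}=J$ via right continuity, continuity from above on compacts via the finite-intersection property, and continuity from below proved first for open sets by compactness and then in general by the $\varepsilon 2^{-n}$ outer approximation combined with the strong sub-additivity of $J^*$ on $\cP(X)$ from Remark~\ref{rem:sub-+J}. One small point to make explicit: before running the induction on $\cU_n$ you should dispose of the trivial case $\sup_n J^*(A_n)=\infty$, since rearranging the strong sub-additivity inequality to subtract $J^*(\cU_{n-1}\cap\open_n)\ge J^*(A_{n-1})$ requires these quantities to be finite.
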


\begin{remark}
  On $X=\z^d$ with  the Euclidean distance, $\cK$ simply  consists of finite
  sets.  Hence  one can  easily  check  that  $\Bcap$ defined  on  $\cK$
  by~\eqref{def-bcap}    satisfies    the    conditions    of    Theorem
  \ref{theo:ext-J}. Hence we can extend  it to a Choquet capacity, which
  we still denote by $\Bcap$. Notice that $\Bcap$ is strictly positive
  (except for empty set) thanks to~\eqref{bcap>0}.
\end{remark}

\begin{remark}
\label{rem:cap_d=cap}
Let  $X=\r^d$ and  $\gamma\in  (0,  d)$. We  consider  the Riesz  kernel
$k_\gamma(x)=          C_{d,\gamma}         |x|^{-\gamma}$          with
$C_{d,    \gamma}=\pi^{-\gamma+d/2}\Gamma(\gamma/2)/\Gamma((d-\gamma)/2)$
(see~(I.1.1.2) in~\cite{landkof72}). For every compact set $K$, define
\begin{equation}\label{def:capd=4}
    \Cap_{\gamma}(K):= \pars*{\inf_{\nu}\int k_\gamma(x-y) \, \nu(\rd  x) \nu(\rd y)}^{-1}, 
\end{equation}
where  the infimum  is  taken  over all  the  probability measures  with
support in $K$. 
In particular, the case $\gamma=d-2$ corresponds to the usual Newtonian
capacity.

For  $\gamma\in(0,d)$, the  function $\Cap_{\gamma}$  can be  extended
into  a  Choquet  capacity  (relative to  $\cK$)  on  $\cP(\r^d)$  using
again~\eqref{eq:choquet}        and~\eqref{eq:choquet-borel},        see
\cite[Section~II.1-2]{landkof72};  we  still  denote this  extension  by
$\Cap_{\gamma}$. We recall that $\Cap_{\gamma}$ is sub-additive, see
\cite[Eq.~(2.2.3)]{landkof72}: 
\[\Cap_{\gamma}(A\cup B) \leq
\Cap_{\gamma}(A)+ \Cap_{\gamma}(B),\quad A, B\subset \r^d. 
\]
   Let   us   stress
that~\eqref{eq:capacitable}  implies   that  Equation~\eqref{def:capd=4}
also holds  for all capacitable  sets, and  in particular for  all Borel
sets.

For    $\gamma\leq  d-2$,    we    mention    that,     according    to
\cite[Section~II.3]{landkof72},  the capacity  of a  compact set  $K$ is
equal to the capacity of its outer boundary, that is the boundary of the
unbounded connected component of $K^c$.

For  $\gamma\geq  d-2$, the  function $\Cap_{\gamma}$
is    strongly     sub-additive    by     \cite[Section~II.1,    Theorem
II.2.5]{landkof72}; so we could  also apply Theorem \ref{theo:ext-J} to
get its   extension   as a   Choquet   capacity. 

\end{remark}

\subsection{Brownian snake  capacities}\label{sec:BrwonianSnake}

Let
${\mathcal W}:=\{w :  [0, \zeta] \to \r^d \mbox{ is  continuous} \}$ be
the  space  of stopped  continuous  paths,  where $\zeta=\zeta_w\ge  0$
denotes the lifetime  of $w$. The end point, $w(\zeta)$,  is denoted by
$\widehat w$.  The space ${\mathcal W}$, equipped with the metric
\[
  d(w, w'):= \sup_{t\ge 0} |w(t\wedge \zeta_w)-  w'(t\wedge
   \zeta_{w'})|+ |\zeta_w- \zeta_{w'}|,
\]
is a Polish space. Let $x\in \r^d$ and ${\mathcal W}_x$ be the space of
stopped path  starting from  $w(0)=x$. The  Brownian snake,  denoted by
$(W_t)_{t\ge 0}$, is  a continuous strong Markov  process taking valued
in ${\mathcal W}_x$, where $W_0$ equals the trivial path (with $w(0)=x$
and  $\zeta_w=0$). The  excursion  measure outside  this trivial  path,
$\N_x$, can be characterized as follows, see \cite[Chapters 4 and 5]{LeGall1999}:
\begin{enumerate}[(i)]
\item 
The lifetime process $(\zeta_s)_{s\ge 0}$, under $\N_x$, is distributed
as a positive Brownian excursion under the It\^{o} measure. In
particular, with $T_0(\zeta):= \inf\{s>0: \zeta_s=0\}$ the length of the
excursion,
\[
  \N_x\Big(\sup_{0\le s\le T_0(\zeta)} \zeta_s > t\Big)= \frac{1}{2 t}, \qquad
  \N_x(T_0(\zeta) \in d t)= \frac{\rd t}{\sqrt{2\pi t^3}}
  \quad\text{for}\quad
  t>0. 
\]
 
\item  Conditionally  on $(\zeta_s)_{s\ge  0}$,  $(W_s)_{s\ge  0}$ is  a
  time-inhomogeneous continuous Markov process such that for any $s, s' > 0$, $W_s$
  and $W_{s'}$ coincide up to  $m_{s, s'}:=\inf_{t\in [s, s']} \zeta_t$,
  and   evolve    as   two    independent   Brownian    motions   during
  $[m_{s,   s'},  \zeta_{W_s}]$   and   $[m_{s,  s'},   \zeta_{W_{s'}}]$
  respectively.
 \end{enumerate}

 Under $\N_x$,  the range of the Brownian snake is defined by
 \begin{equation}
   \RR:=\{\widehat  W_s: 0\le s \le T_0(\zeta)\}= \{ W_s(t)\,\colon\,
   s\in [0,T_0(\zeta)],  \, t\in [0, \zeta_s]\} .
   \label{def-rangeBrownianSnake}
 \end{equation}

\medskip
Let $A$ be a Borel set of $\r^d$. 
 We define the function $u_A$ on $A^c$ by: 
\begin{equation}
\label{def-u_A}
u_A(x):= \N_x(\RR \cap A \neq \emptyset), \qquad x \in A^c.
\end{equation}
  Using the  scaling of  the Brownian  snake, one get  the existence  of a
regular conditional probability distribution  of the Brownian snake with
respect  to the excursion  length:
\[
  \N_x^{(1)}(\rd W)=\N_x(\rd W \, |\,
  T_0(\zeta)=1).
\]
By the scaling property of Brownian snake, we have 
\begin{align}  
\nonumber
u_A(x)
&= \int_0^\infty \N_0\Big(\RR \cap (A-x) \neq \emptyset \,|\,
         T_0(\zeta)=s\Big)\,  \frac{\rd s}{\sqrt{2\pi s^3}}\\ 
  &= \int_0^\infty \N_0^{(1)}\Big((s^{1/4}\RR)
    \cap (A-x) \neq \emptyset \Big)\,  \frac{\rd s}{\sqrt{2\pi s^3}}\cdot
\label{eq:scaling-ISE}   
\end{align}
We deduce the following scaling property
\begin{equation}\label{eq:scaling-Range}
  u_A(x)=a^2 u_{aA}(ax)
  \quad\text{for}\quad
    a>0.
\end{equation}
Moreover, according to  \cite[Theorem~1]{dlg1997},
Definition~\eqref{def-u_A} can be extended to $A\in \cP(\r^d)$
and, in dimension $d\geq 5$, 
there exists two constants $c_1$  and $c_2$ such that, 
if  $A\subset \ball(0,1)$ and
$|x|\geq 2$, then
\begin{equation}
   \label{eq:ineq-cap}
   c_1 \, \Cap_{d-4} (A) \leq  |x|^{d-2} \, u_A(x) \leq    c_2\,  \Cap_{d-4}   (A). 
\end{equation}
A set $A\subset  \r^d$ is called $\RR$-polar if $  \Cap_{d-4} (A)=0$, or
equivalently, if $u_A$ is identically zero.

We cite the following result from \cite{MR1341844}, and give a short proof for readers' convenience.
\begin{lemma}
  \label{lem:uA}
  Let $A$ be a Borel set. 
  The function $u_A$ is a nonnegative solution of
 \begin{equation}
   \label{eq:Du=u2}
   \Delta u= 4 \, u^2 \quad\text{in}\quad \overline A ^c.
 \end{equation}
When $A=K$ is compact, then $u_K$ is
the maximal nonnegative solution to~\eqref{eq:Du=u2}.
Furthermore, the function $u_A$  is  strictly  positive and  $C^\infty  $  on $\overline A^c$  unless $A$ is $\RR$-polar.
\end{lemma}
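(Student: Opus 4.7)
The backbone of the argument is the standard connection between the Brownian snake and solutions of semilinear elliptic PDEs. Recall that for any bounded nonnegative measurable function $g$ on $\r^d$, the Laplace functional
\[
v_g(x):=\N_x\Big(1-\exp\Big(-\int_0^{T_0(\zeta)} g(\widehat W_s)\,\rd s\Big)\Big)
\]
is the unique bounded nonnegative classical solution of $\tfrac12 \Delta v = 2v^2 - g$; this is the basic PDE/snake dictionary developed in \cite{LeGall1999}. My plan is to apply this with $g=\lambda \mathbf{1}_A$ and send $\lambda\to\infty$.

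First, monotone convergence gives $v_{\lambda\mathbf{1}_A}(x)\uparrow \N_x\bigl(\int_0^{T_0(\zeta)} \mathbf{1}_A(\widehat W_s)\,\rd s>0\bigr)$. The continuity of the snake paths together with the definition~\eqref{def-rangeBrownianSnake} of $\RR$ implies that for a Borel set $A$ this limit coincides with $u_A(x)=\N_x(\RR\cap A\neq\emptyset)$: the inclusion $\subseteq$ is obvious, and the reverse is obtained by approximating $A$ from outside by open sets and using the regularity coming from~\eqref{eq:choquet-borel} together with the occupation-density estimates already used in \cite{dlg1997} to prove~\eqref{eq:ineq-cap}. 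On the complement $\overline A^c$ the source term $\lambda\mathbf{1}_A$ vanishes, so each $v_{\lambda\mathbf{1}_A}$ solves $\tfrac12\Delta v=2v^2$ there; standard interior elliptic regularity yields $C^\infty$ bounds uniform in $\lambda$ on compact subsets of $\overline A^c$ (the right-hand side $2v^2$ is locally uniformly bounded by the trivial bound $v\le 1$). Passing to the limit, $u_A$ itself is $C^\infty$ on $\overline A^c$ and satisfies $\Delta u_A=4u_A^2$ there, which is~\eqref{eq:Du=u2}.

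For the maximality statement when $A=K$ is compact, let $w\ge 0$ be any solution of $\Delta w=4w^2$ on $\overline K^c=K^c$. The key point is that $v_{\lambda\mathbf{1}_K}$ realizes a prescribed ``boundary'' control by $\lambda$ and, thanks to the classical stochastic representation, can be shown to lie below $w$ for every $\lambda$ by a comparison principle for $\tfrac12\Delta v=2v^2-\lambda\mathbf{1}_K v_-$ on suitable sub-domains (truncating by a large ball and using that $w$ is nonnegative on $\partial K$ while $v_{\lambda\mathbf{1}_K}\to 0$ at infinity). Letting $\lambda\to\infty$ gives $u_K\le w$. Finally, strict positivity of $u_A$ on $\overline A^c$ when $A$ is not $\RR$-polar is a Harnack-type property of the equation $\Delta u=4u^2$: if $u_A$ vanished at one point of $\overline A^c$ it would vanish on the whole connected component, hence by continuity also near $\partial A$, contradicting $\liminf_{x\to A}u_A(x)>0$ on the non-polar part of $A$, which follows from the lower bound in~\eqref{eq:ineq-cap} applied to small pieces of $A$.

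The step I expect to be most delicate is the identification, for a general Borel set $A$, of $\lim_{\lambda\to\infty} v_{\lambda\mathbf{1}_A}$ with $u_A=\N_x(\RR\cap A\neq\emptyset)$: this requires controlling the difference between ``the range meets $A$'' and ``the path $\widehat W$ spends positive time in $A$'', which for non-open $A$ uses the total disconnectedness of the zero set of the lifetime $\zeta$ and the Hausdorff-dimension/capacity comparison underlying~\eqref{eq:ineq-cap}. Once that identification is in hand, the remaining PDE-theoretic inputs (interior regularity, comparison principle, Harnack) are essentially routine.
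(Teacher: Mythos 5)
There is a genuine gap at the step you yourself flag as delicate, and it is not repairable along the lines you sketch. The monotone limit $\lim_{\lambda\to\infty}v_{\lambda\ind_A}(x)=\N_x\bigl(\int_0^{T_0(\zeta)}\ind_A(\widehat W_s)\,\rd s>0\bigr)$ is in general \emph{strictly smaller} than $u_A(x)=\N_x(\RR\cap A\neq\emptyset)$, and the discrepancy occurs precisely for the sets the lemma must cover. Indeed, if $A$ has Lebesgue measure zero then, by Fubini and the fact that for each fixed $s>0$ the law of $\widehat W_s$ under $\N_x(\,\cdot\,\cap\{s<T_0(\zeta)\})$ is absolutely continuous (it is a Brownian endpoint), one gets $\N_x\bigl[\int_0^{T_0(\zeta)}\ind_A(\widehat W_s)\,\rd s\bigr]=0$, hence the occupation time of $A$ vanishes $\N_x$-a.e.\ and $\lim_\lambda v_{\lambda\ind_A}\equiv 0$. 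Yet $u_A$ can be strictly positive: take $A=\partial\ball(0,1)$ in $d\geq 5$, which has $\Cap_{d-4}(A)>0$ and hence $u_A>0$ by~\eqref{eq:ineq-cap}. So the identification fails even for compact $A$. The proposed fixes do not close this: outer approximation by open sets only yields $u_A\leq\lim_n u_{\open_n}$ with $\bigcap_n\open_n\supset A$ possibly much larger than $A$ in capacity, and no appeal to the structure of the zero set of $\zeta$ changes the Fubini computation above. Since the whole derivation of~\eqref{eq:Du=u2} (and of the interior regularity) rests on this identification, the argument as written does not prove the lemma.

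The paper's route avoids occupation times entirely and is the one you should use: for an open ball $D\subset\overline A^{\,c}$, the spatial Markov property of the Brownian snake (\cite[Theorem~2.4]{LeGall1995}) gives the exit-measure representation $u_A(x)=\N_x\bigl(1-\mathrm{e}^{-\langle X^D,u_A\rangle}\bigr)$ for $x\in D$, and \cite[Corollary~4.3]{MR1341844} then says that any such Laplace functional of the exit measure is $C^\infty$ in $D$ and solves $\Delta u=4u^2$ there; maximality for compact $K$ is \cite[Proposition~5.3]{MR1341844}, and strict positivity off the polar case is read off from the lower bound in~\eqref{eq:ineq-cap}. Your comparison-principle sketch for maximality and your use of~\eqref{eq:ineq-cap} for positivity are in the right spirit (though note $\overline A^{\,c}$ need not be connected, so a purely Harnack-type propagation from one point is insufficient on its own), but the central PDE step must go through exit measures, not through $g=\lambda\ind_A$.
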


\begin{proof}
  According to Proposition~5.3  in \cite{MR1341844}, 
  when $K$ is compact, $u_K$ is the maximal nonnegative solution to~\eqref{eq:Du=u2}.

Let $A$ be a Borel set and $D$  be an open ball in $\overline A^c$. It suffices to prove \eqref{eq:Du=u2} in $D$. Let $X^D$
denote   the   exit  measure   of $D$ for  the Brownian   snake  (see   Section   2
in~\cite{LeGall1995} with  $\Omega=\r_+ \times D$). 
For $x\in D$, by \cite[Theorem~2.4]{LeGall1995} on the spatial Markov
property, we have
\[
  u_A(x)
=  \N_x\left(1- \mathrm{e}  ^{-\int_{\r^d} u_A(x)\, X^D(\rd x)}\right).
\]
Then we deduce by \cite[Corollary 4.3]{MR1341844}, that $u_A$
is $C^\infty$ in $D$ and 
solves~\eqref{eq:Du=u2} in $D$.

{Finally, we use~\eqref{eq:ineq-cap} to conclude  that the function $u_A$
is strictly positive on $\overline A^c$ unless $A$ is $\RR$-polar.}
\end{proof}

\subsection{Proof of Theorem \ref{p:snakecap}}

Recall that  $d\ge 5$,  and $\ball(x,r)$  denotes a  ball of  radius $r$
centered at  $x$.  By the scaling  property \eqref{eq:scaling-Range}, we
can  assume  that  $A\subset  \ball(0,1)$.  Let  $x\in  \r^d$,  $|x|\geq
4$. Following the ideas of~\cite{dlg1997}, we consider a smooth function
$\psi$ on $\r^d$  such that $\psi\in [0, 1]$, $\psi=0$  on $\ball(0, 2)$
and  $\psi=1$ outside  $\ball(0, 3)$.   Let $(\beta_t)_{t\geq  0}$ be  a
Brownian motion started  at $\beta_0=x$. Using It\^{o}  calculus, we get
that
\[
  (\psi u_A) (\beta_t)= u_A(x) + \int _0 ^t \nabla (\psi u_A) (\beta _s) \, \rd \beta _s +
  \frac{1}{2} \int _0 ^t \Delta(\psi u_A) (\beta _s) \, \rd s.
\]
Let $a> |x|$ and $T_a=\inf\{ y\geq 0\, \colon\, |\beta _t|=a\}$, which is
a.s.\ finite. We get that for $t\geq 0$,
\begin{equation}\label{eq:ito}
  \E\left[(\psi u_A) (\beta _{T_a \wedge t})\right]
  = u_A(x) + \frac{1}{2}  \E\left[\int_0^{T_a \wedge t} \Delta (\psi u_A)
    (\beta _s)\,\rd s\right]. 
\end{equation}
Since $T_a$ is finite and $\psi(\beta _{T_a})=1$, by \eqref{eq:ineq-cap}, we deduce that
\[
 \lim_{t\rightarrow \infty }
    \E\left[(\psi u_A) (\beta _{T_a \wedge t})\right]=
       \E\left[u_A (\beta _{T_a
        })\right]\le c_2a^{2-d}\Cap_{d-4}(A),
\]
thus
\[
 \lim_{a\rightarrow\infty}\lim_{t\rightarrow \infty }
    \E\left[(\psi u_A) (\beta _{T_a \wedge t})\right]
=0.
\]
By Lemma \ref{lem:uA}, we decompose $\Delta(\psi u_A)$ into the sum of
$\psi\Delta u_A=4\psi u_A^2$ and $f:=  u_A \Delta  \psi  + 2  \nabla  \psi \,  \nabla  u_A$.  By  monotone convergence, we have
\begin{align*}
   \lim_{a\rightarrow \infty } \lim_{t\rightarrow \infty }
  \E\left[\int_0^{T_a \wedge t}  (\psi \Delta u_A)
  (\beta _s)\,\rd s\right]
  &= 4 \lim_{a\rightarrow \infty } \lim_{t\rightarrow \infty }
  \E\left[\int_0^{T_a \wedge t}  (\psi  u_A^2)
    (\beta _s)\,\rd s\right]\\
    &= 4 
  \E\left[\int_0^{\infty }  (\psi  u_A^2)
      (\beta _s)\,\rd s\right]\\
  &= 4 c_d \int_{\r^d}  (\psi  u_A^2)(y) |x-y|^{2-d}\, \rd y,
\end{align*}
where  
\[
c_d:= \frac{ \, \Gamma(\frac{d}2 -1)}{2 \pi^{d/2}}\cdot
\]
The latter integral is finite  since $\psi  u_A^2$ is bounded on
$\r^d$, and thus on $\ball(x, 1)$, and since 
\begin{equation}
   \label{eq:bd-psiu4}
  (\psi  u_A^2)(y) \leq  c\,
  (1 \vee |y|)^{4 -2d}
  \quad\text{for}\quad
  y\in \r^d
  \end{equation}
by~\eqref{eq:ineq-cap}, which entails that for some constant $c'>0$,
\begin{equation}
   \label{eq:intu4-fini}
  \int_{\ball(x,1)^c}   (\psi  u_A^2)(y) |x-y|^{2-d}\, \rd y \leq
  c' \int_0^\infty  (1 \vee r) ^{4 - 2d} \, r^{d-1}\, \rd r <\infty .
\end{equation}

Then, notice the function $f= u_A \Delta  \psi  + 2  \nabla  \psi \,  \nabla  u_A$ is continuous with support in $\overline \ball(0, 3)
\cap \ball(0,2)^c$. Since the time spent by the Brownian motion in $\ball(0, 3)$ has
a finite expectation, we deduce by dominated convergence that
\begin{equation}\label{eq:f_finite}
   \lim_{a\rightarrow \infty } \lim_{t\rightarrow \infty }
 \E\left[\int_0^{T_a \wedge t}  f
   (\beta _s)\,\rd s\right]
 =  
 \E\left[\int_0^\infty   f
   (\beta _s)\,\rd s\right]= c_d \int_{\r^d} f(y)  |x-y|^{2-d}\, \rd
 y\in \r.
\end{equation}

In conclusion we obtain from \eqref{eq:ito} that
\begin{equation}\label{eq:uK=}
  2 u_A(x)= - c_d \int_{\r^d} (4 \psi  u_A^2+ f)(y)\,  |x-y|^{2-d}\, \rd
 y.
\end{equation}

To conclude the proof of Theorem \ref{p:snakecap}, it is enough to prove
that 
\begin{equation}
  \label{eq:lim-uA}
 \lim_{x \rightarrow \infty }
|x|^{d-2} u_A(x) = -  \frac{c_d}{2}
  \int_{\r^d} (4 \psi u_A^2  - u_A \Delta \psi)(y)\, \rd y<\infty.
\end{equation}

To begin with, since the function $f$ is bounded with bounded support, we deduce by dominated convergence that
\begin{equation}\label{eq:f}
  \lim_{x\rightarrow \infty } |x|^{d-2} \int_{\r^d} f(y)\,
  |x-y|^{2-d}\, \rd y 
=  \int_{\r^d} f(y)\,\rd y<\infty. 
\end{equation}

Let $\varepsilon\in (0,\frac 1 2)$. Thanks to~\eqref{eq:bd-psiu4}, we know that
$     \psi     u_A^2$     is     integrable     on     $\r^d$.     Since
$y\mapsto \ind_{\ball(0, \varepsilon |x|)}(y)\,  |x|^{d-2} |x-y|^{2-d}$ is bounded
uniformly in  $x$ and converges as  $x\rightarrow \infty $ to  $1$, we
deduce by dominated convergence that
\begin{equation}
   \label{eq:cvuk-1}
\lim_{x\rightarrow \infty }       
 |x|^{d-2}  \int_{\ball(0, \varepsilon |x|)} \psi u_A^2 (y)
    |x-y|^{2-d} \, \rd y = \int_{\r^d} \psi u_A^2 (y)
    \, \rd y<\infty.
\end{equation}

For  $y\not \in \ball(0, \varepsilon |x|)$, by \eqref{eq:bd-psiu4}, we have
\[  \psi     u_A^2(y)
|x|^{d-2}|x-y|^{2-d} \lesssim
  |y|^{4 -2d} |x|^{d-2} |x-y|^{2-d}=: h(x,y).
\]
For  $y\in \ball(x, \varepsilon |x|)$, we have $h(x,y)\leq  (1-\varepsilon)^{4
  -2d} |x|^{ 2-d} |x-y|^{2-d}$, hence
\[
  \int_{\ball(x, \varepsilon |x|)} h(x,y)\, \rd y \lesssim \varepsilon ^{2} |x|^{4 -d}.
\]
For $y\not \in \ball(x, \varepsilon |x|) \cup \ball(0, \varepsilon |x|)$, we
also get that  $h(x,y)\leq  \varepsilon^{2-d}  |y|^{ 4-2d}$ and thus 
\[
  \int_{\ball(x, \varepsilon |x|)^c \cap \ball(0,\varepsilon |x|)^c } h(x,y)\,
  \rd y \lesssim \varepsilon^{6-2d} \,  |x|^{4 -d}.
\]
From the above we conclude that $\lim_{|x|\rightarrow \infty } \int_{\ball(0, \varepsilon
  |x|)^c} h(x,y)\, \rd y =0$.
This together with \eqref{eq:uK=}, \eqref{eq:f} and~\eqref{eq:cvuk-1} gives that  
\[
 \lim_{x \rightarrow \infty }
 |x|^{d-2} u_A(x)
 = - \frac{c_d}{2}  \int_{\r^d} (4 \psi  u_A^2+ f)(y)\,  \rd
 y<\infty.
\]
Performing  an integration  by part  and using  that the  derivatives of
$\psi$  are  zero  outside  $\overline  \ball(0,  3)$,  we  deduce  that
$\int _{\r^d} (\nabla \psi \nabla u_A) (y) \, \rd y = - \int_{\r^d} (u_A
\Delta \psi)(y)  \, \rd y$.  This  gives~\eqref{eq:lim-uA} and concludes 
the proof.  \hfill $\Box$

\begin{remark}\label{rmk:low_dim}
We now consider low dimension case, that is,  $d\in\{1,2,3,4\}$. We obtain by \cite[Eq.~(4.7)]{iscoe} that for all $r>0$
\begin{equation}\label{eq:d_le_4}
\lim_{x \rightarrow \infty} \varphi_d(|x|)\, u_{\ball(0,r)}(x)=1,
\end{equation}
where the function $\varphi_d$ is defined  for $t>1$ by
\[
\varphi_d(t)= \begin{cases}
\frac{2}{4-d}\, t ^{2},\,&1\le d\le 3,\\
 2\, t^{2}\log(t),\,&d=4.
\end{cases}
\]
Notice the function $\varphi_d$ does not depend on $r$. Let  $A\subset\r^d$ be a bounded Borel set with nonempty interior. There exists  $x_0\in A$ and $r_1,r_2>0$ such that $\ball(x_0,r_1)\subset A\subset\ball(x_0,r_2)$. We thus deduce from~\eqref{eq:d_le_4} that
 $\N_x(\RR\cap A\neq \emptyset)=u_A(x)$ satisfies the same asymptotic as $u_{\ball(0,r)}(x)$:
 \[
 \lim_{x \rightarrow \infty} \varphi_d(|x|)\, u_{A}(x)=1.
\]
\end{remark}

\subsection{Proof of Proposition~\ref{prop:propertiesBScap}}
 Recall $d\geq 5$ and  $\cK$ denotes the collection of compact sets in $\r^d$.
We first prove Point~\ref{it:BS=cap} of Proposition~\ref{prop:propertiesBScap} on $\BScap$ being a strongly
sub-additive Choquet capacity by considering the extension $J^*$ of
$J:=\BScap|_\cK$ as in Section
  \ref{sec:capacity}. 
Notice that for every $K\in\cK$, $J(K)$ is finite.

\begin{lemma}\label{lem:BScap=cap}
  Let $d\geq  5$.  Let $J^*$ denote  the extension of $J$ defined by 
using~\eqref{eq:choquet} and~\eqref{eq:choquet-borel}.  Then $J^*$  is  a  strongly sub-additive  Choquet
  capacity which coincides with $\BScap$ on bounded Borel sets.
\end{lemma}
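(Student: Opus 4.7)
The plan is to apply Theorem~\ref{theo:ext-J} to the restriction $J:=\BScap|_\cK$, which will produce $J^*$ as a Choquet capacity, strongly sub-additive by Remark~\ref{rem:sub-+J} and coinciding with $J$ on $\cK$. Separately I must identify $J^*$ with $\BScap$ on all bounded Borel sets. The three hypotheses of Theorem~\ref{theo:ext-J} to verify for $J$ are monotonicity, strong sub-additivity, and right continuity.

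Monotonicity and strong sub-additivity of $J$ descend from the corresponding pointwise properties of $u_K(x)=\N_x(\RR\cap K\neq\emptyset)$. For strong sub-additivity, I apply the identity $\ind_{E\cup F}+\ind_{E\cap F}=\ind_E+\ind_F$ with $E=\{\RR\cap K_1\neq\emptyset\}$ and $F=\{\RR\cap K_2\neq\emptyset\}$: since $\{\RR\cap(K_1\cup K_2)\neq\emptyset\}=E\cup F$ while $\{\RR\cap(K_1\cap K_2)\neq\emptyset\}\subset E\cap F$, integrating against $\N_x$, multiplying by $|x|^{d-2}$, and letting $x\to\infty$ (via Theorem~\ref{p:snakecap}) transfers the inequality to $J$.

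The main obstacle is right continuity. Given $K\in\cK$ and $a>J(K)$, I take $\open_n=\{y:d(y,K)<1/n\}$, whose closures $\overline{\open_n}$ are compact and decrease to $K$. Since $\RR$ is $\N_x$-a.s.\ compact, $\{\RR\cap\overline{\open_n}\neq\emptyset\}\downarrow\{\RR\cap K\neq\emptyset\}$, giving $u_{\overline{\open_n}}(x)\downarrow u_K(x)$ pointwise. To upgrade this to $\BScap(\overline{\open_n})\downarrow\BScap(K)$, I apply dominated convergence in the integral representation
\[
\BScap(A)=-\frac{c_d}{2}\int_{\r^d}\bigl(4\psi u_A^2-u_A\Delta\psi\bigr)(y)\,\rd y
\]
established in the proof of Theorem~\ref{p:snakecap} (after rescaling so that $A\subset\ball(0,1)$), using the uniform bound $u_A(y)\lesssim(1\vee|y|)^{2-d}$ from \eqref{eq:ineq-cap} to dominate. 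For large enough $n$ then $\BScap(\overline{\open_n})<a$, and by monotonicity $J(K')\leq\BScap(\overline{\open_n})<a$ for every compact $K'\subset\open_n$, giving right continuity.

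Finally, to identify $J^*$ with $\BScap$ on bounded Borel sets, I first handle bounded open $\open$ by writing $\open=\bigcup_nK_n$ as an increasing union of compacts ($\r^d$ being locally compact and $\sigma$-compact); then $u_{K_n}\uparrow u_\open$ pointwise, and monotone convergence in the same integral formula yields $\BScap(K_n)\uparrow\BScap(\open)$, so $J^*(\open)=\sup_{K\subset\open,\,K\in\cK}\BScap(K)=\BScap(\open)$. For a bounded Borel set $A$, Choquet capacitability of $J^*$ gives $J^*(A)=\sup_{K\subset A,\,K\in\cK}\BScap(K)\leq\BScap(A)$ by monotonicity of $\BScap$, while outer regularity of $J^*$ gives $J^*(A)=\inf_{\open\supset A,\,\open\in\cO}J^*(\open)=\inf_{\open\supset A}\BScap(\open)\geq\BScap(A)$ again by monotonicity, so the two coincide.
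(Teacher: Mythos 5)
Your proposal is correct and follows essentially the same route as the paper: verify monotonicity, strong sub-additivity and right continuity of $J=\BScap|_\cK$ (the latter via the neighborhoods $\open_n$, compactness of $\RR$, and dominated convergence in the integral representation \eqref{eq:lim-uA}), invoke Theorem~\ref{theo:ext-J} and Remark~\ref{rem:sub-+J}, and then identify $J^*$ with $\BScap$ on bounded Borel sets using capacitability; your last step, passing through bounded open sets and outer regularity, is only a cosmetic variant of the paper's direct supremum over compacts. Two trivial points to polish: the passage $\BScap(K_n)\uparrow\BScap(\open)$ should again invoke dominated (not monotone) convergence, since the integrand $4\psi u_A^2-u_A\Delta\psi$ is not of one sign, and in the outer-regularity step one should note that the infimum over open $\open\supset A$ may be restricted to bounded $\open$ by intersecting with a large ball.
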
 
\begin{proof}
For $K,K'\in\cK$ and  $x$  large enough, we trivially deduce from~\eqref{def-u_A} and inclusion-exclusion principle that 
$0\leq u_K(x)\leq u_{K\cup K'}(x)$  and
$u_{K\cup K'}(x) + u_{K\cap K'}(x) \leq u_K(x)+  u_{K'}(x)$, hence $J$ is non-decreasing and strongly sub-additive.  

Set  $\open_n=  \{x\in  \r^d\,  \colon\,  d(x,  K)<1/n\}$  which is  open  and
$K_n=\overline   \open_n$   which is   compact.    Since  the   map   $J$   is
non-decreasing, we  deduce that  $J(K)\leq J(K_n)$.  Moreover,  $\RR$ is
$\N_x$-a.e.\  compact, so
\[
  \ind_{\{\RR\cap K\ne\emptyset\}}
  =\lim_{n\rightarrow\infty}
  \ind _{\{\RR\cap K_n\ne\emptyset\}},\quad \text{$\N_x$-a.e..}
\]
Hence by dominated convergence in \eqref{def-u_A} and \eqref{eq:lim-uA},
we have  for $x$ large enough
\[
  \lim_{n\rightarrow\infty  }   u_{K_n}(x)=u_K(x)
  \quad\text{and then}\quad
   \lim_{n\rightarrow\infty   }    J(K_n)=J(K).
\]
Thus for every $a>J(K)$, we can find $n$ such that $a>J(K_n)$, and any $K'\subset \open_n$ must satisfy $J(K')\le J(K_n)<a$, showing the right continuity  of $J$ on $\cK$. 
We conclude by Theorem~\ref{theo:ext-J}
that $J^*$ is a Choquet capacity which coincides with $J$
on $\cK$. 

We also deduce from Remark~\ref{rem:sub-+J} that $J^*$ is strongly
sub-additive. 

\medskip

Finally, we show that $J^*$ and $\BScap$ agrees on every bounded Borel set $A$. Indeed, since
\[
\sup_{K\in\cK,K\subset A}\ind_{\{\RR\cap K\ne\emptyset\}}\le
\ind_{\{\RR\cap A\ne\emptyset\}}
=\sup_{x\in A}\ind_{\{\RR\cap \{x\}\ne\emptyset\}}
\le
\sup_{K\in\cK,K\subset A}\ind_{\{\RR\cap K\ne\emptyset\}},
\]
we deduce by dominated convergence as above that for $x$ large enough
\begin{equation}\label{eq:compact_sup_1}
 \sup_{K\in\cK,K\subset A}   u_{K}(x)=u_A(x)\quad\text{and then} \quad
 \sup_{K\in\cK,K\subset A}    \BScap(K)=\BScap(A).
\end{equation}
Since all Borel sets are capacitable (with respect to $J^*$), we also have
\[
\sup_{K\in\cK,K\subset A}    J^*(K)=J^*(A).
\]
Hence $J^*|_\cK=J=\BScap|_\cK$ implies that $J^*$ agrees with $\BScap$
on bounded Borel sets.
\end{proof}

We  deduce  Point~\ref{it:BS=scaling} of Proposition~\ref{prop:propertiesBScap} on the  scaling  and  translation
invariance properties for  bounded Borel sets from  the scaling property
of    the    Brownian     snake,    see~\eqref{eq:scaling-Range},    and
Point~\ref{it:BS=d-4}  on   the  comparison  with  the   Riesz  capacity
$\Cap_{d-4}$   from~\eqref{eq:ineq-cap},  then   we  can   extend  those
properties  first  to  bounded sets  using~\eqref{eq:choquet-borel}  and
general sets using the monotonicity of Choquet capacities.

Point~\ref{it:BS=reg}    corresponds   to    the    next   lemma.    For
$\open\subset \r^d$ open, let  $\partial_{\text{ext}}\open$ denote its
outer boundary,  that is  the boundary of  the unbounded  open connected
component of $\overline{\open}^c$.

  \begin{lemma}\label{lem:regular}
Let $d\geq 5$ and $D\subset\r^d$ be  bounded open set such that,  for all $y\in
\partial_{\text{ext}}\open$,  the quantity 
$ \liminf_{n\rightarrow \infty } 2^{n(d-2)} \, \Cap_{d-2} \left(\open  \cap
   \ball(y, 2^{-n}) \right) 
$ is positive.
Then, we have
\[
\BScap(\open)=\BScap(\overline{\open}).
\]
\end{lemma}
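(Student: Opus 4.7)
The inequality $\BScap(D) \le \BScap(\overline D)$ is immediate from the monotonicity of the Choquet capacity $\BScap$ established in Lemma~\ref{lem:BScap=cap}. To prove the reverse inequality, recall from Theorem~\ref{p:snakecap} that $\BScap(A) = \lim_{|x|\to\infty} |x|^{d-2} u_A(x)$ with $u_A(x) = \N_x(\RR \cap A \neq \emptyset)$, so it suffices to show that $u_D(x) = u_{\overline D}(x)$ for every $x$ of sufficiently large norm. Let $U$ denote the unbounded connected component of $\overline D^c$; then $\partial U = \partial_{\text{ext}} D$ and $\setof{x}{|x|\ge R} \subset U$ for $R$ large. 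By Lemma~\ref{lem:uA}, both $u_D$ and $u_{\overline D}$ are smooth nonnegative solutions of $\Delta u = 4 u^2$ on $U$ and vanish at infinity by~\eqref{eq:ineq-cap}; since $u_{\overline D}$ is the maximal nonnegative solution on the larger open set $\overline D^c$, its restriction to $U$ is the maximal such solution on $U$, and in particular $u_D \le u_{\overline D}$ on $U$.

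The core step is to show that the hypothesis~\eqref{eq:d-2_condition} forces $u_D$ to blow up at every outer boundary point:
\[
\liminf_{U \ni x \to y} u_D(x) = +\infty \qquad \text{for every } y \in \partial_{\text{ext}} D.
\]
This is a Wiener-type regularity criterion for the semilinear equation $\Delta u = 4 u^2$; the quantity $2^{n(d-2)} \Cap_{d-2}(D \cap \ball(y,2^{-n}))$ appearing in~\eqref{eq:d-2_condition} is the Brownian-snake analogue of the classical Wiener test (as the introduction notes, the latter uses the very same series), and the criterion is essentially due to Dynkin--Kuznetsov and contained in~\cite{MR1341844}. Granted this blow-up, both $u_D$ and $u_{\overline D}$ are nonnegative solutions of $\Delta u = 4u^2$ on $U$ with infinite boundary values on $\partial U$ and vanishing at infinity. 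The uniqueness of nonnegative solutions with prescribed infinite boundary values, also from~\cite{MR1341844} (applied to the bounded open sets $U \cap \ball(0,R)$ as $R\to\infty$, the common decay at infinity playing the role of matching finite data on the outer sphere), forces $u_D \equiv u_{\overline D}$ on $U$. Sending $|x|\to \infty$ yields $\BScap(D) \ge \BScap(\overline D)$.

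The principal obstacle is the blow-up statement. The naive lower bound on $u_{D\cap \ball(y,2^{-n})}(x)$ obtained from the scaling identity~\eqref{eq:scaling-Range} combined with~\eqref{eq:ineq-cap} is phrased in terms of $\Cap_{d-4}$, which is strictly weaker than the $\Cap_{d-2}$ hypothesis; bridging this gap and iterating the contributions from successive dyadic shells around $y$ to produce genuine divergence requires the finer PDE/probabilistic analysis of solutions of $\Delta u = 4u^2$ developed in~\cite{MR1341844}, rather than the elementary estimate~\eqref{eq:ineq-cap} alone.
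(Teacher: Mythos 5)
Your overall strategy --- monotonicity for one inequality, boundary blow-up of $u_{\open}$ plus uniqueness of large solutions of $\Delta u=4u^2$ for the other --- is the PDE shadow of what the paper actually does, but both of your key steps are left as appeals to \cite{MR1341844} and neither is actually available there in the form you need; the paper's proof is structured precisely so as to avoid them. First, the blow-up. What the paper proves is the different statement $\N_y(\RR\cap \open\neq\emptyset)=\infty$ \emph{at} each point $y\in\partial_{\text{ext}}\open$ (an infinite total mass under the excursion measure started at $y$, via super-regularity and \cite[Theorem~3, Proposition~2]{dlg1997}), not that $u_\open(x)\to\infty$ as $x\to y$ from inside the unbounded component $U$. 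Even for the statement the paper uses, converting the hypothesis~\eqref{eq:d-2_condition} (a liminf over balls, phrased with $\Cap_{d-2}$) into divergence of the Wiener series over dyadic shells phrased with $\Cap_{d-4}$ takes a small but genuine argument (subtracting capacities of nested balls to pass from balls to shells, then grouping blocks of $k$ shells); you do not address this reduction at all, and you correctly note yourself that the naive estimate from~\eqref{eq:ineq-cap} only yields a uniform positive lower bound on each scale rather than divergence. Second, the uniqueness of nonnegative solutions on $U$ with infinite boundary values on $\partial U$ and decay at infinity is a substantial unproved claim: on $U\cap\ball(0,R)$ the comparison principle cannot be applied on $\partial U$, where both candidate solutions are $+\infty$ (so the difference is indeterminate), and on $\partial\ball(0,R)$ the two solutions do not match but merely both decay; uniqueness of large solutions on domains with only Wiener-regular boundaries is exactly the hard part, and it is not contained in the survey \cite{MR1341844}. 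A side error: your assertion that $u_{\overline\open}$ restricted to $U$ is the maximal solution on $U$ is false (the maximal solution on $U$ is $u_{U^c}\ge u_{\overline\open}$); the inequality $u_\open\le u_{\overline\open}$ you want from it is of course trivial by monotonicity of the hitting events.

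The paper circumvents both difficulties probabilistically. Writing $\open'$ for the unbounded component of $\overline\open^{\,c}$ and $X^{\open'}$ for the exit measure, it shows $\N_x(\RR\cap\overline\open\neq\emptyset)=\N_x(X^{\open'}\neq 0)$ using \cite[Theorem~6.9]{LeGall1999}, and then, since the support of $X^{\open'}$ lies in $\partial_{\text{ext}}\open$ where $\N_y(\RR\cap\open\neq\emptyset)=\infty$, the special Markov property gives $\N_x(\RR\cap\open\neq\emptyset)=\N_x\left(\int\N_y(\RR\cap\open\neq\emptyset)\,X^{\open'}(\rd y)>0\right)=\N_x(X^{\open'}\neq 0)$. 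This only requires the pointwise infinity of $u_\open$ on the outer boundary (integrated against the exit measure), not a boundary limit, and it replaces the uniqueness theorem entirely. To repair your write-up you would either need to supply genuine proofs of the interior blow-up and of the uniqueness statement, or switch to the exit-measure argument.
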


\begin{proof}
In a first step, using the notion of super-regularity from
\cite{dlg1997}, we shall prove that
\begin{equation}
  \label{eq:super-reg}
   \N_y(\RR\cap D\neq \emptyset)=\infty
\quad\text{for all}\quad
y\in \partial_{\text{ext}}\open. 
\end{equation}
Let $y\in \partial_{\text{ext}}\open$ be fixed. For simplicity, write
$\ball_n=\ball(y, 2^{-n})$ for $n\in \n$ and set
\begin{equation}
   \label{eq:reg-x-D}
c_0= 4^{-1} \liminf_{n\rightarrow \infty } 2^{n(d-2)} \, \Cap_{d-2}
\left(\open  \cap 
      \ball_n\right),  
  \end{equation}
  which   is   positive   by   assumption.  By   scaling, we have
  $\Cap_{d-2} (B_n)= 2^{-n(d-2)} \,  \Cap_{d-2} (B_0)$.  Let $k\in \n^*$
  be  such  that  $2^{-k(d-2)}  \Cap_{d-2}  (B_0)  \leq  c_0$  and
  thus, by scaling, 
  $\Cap_{d-2} (B_{n+k})\leq c_0 2^{-n(d-2)}$ for all $n\in \n$.  We consider the spherical
  shell $C_{n,k}=\ball_n \cap \ball_{n+k}^c$.  By sub-additivity and scaling
  of $\Cap_{d-2}$, we have for $n$ large enough
\begin{align*}
\Cap_{d-2}\left(\open\cap C_{n,k}\right)
&  \geq    \Cap_{d-2}\left(\open\cap \ball_n\right)
  - \Cap_{d-2}\left(\open\cap \ball_{n+k}\right)
  \\
  & \geq   2 c_0 \, 2^{-n(d-2)}
    - \Cap_{d-2}\left( \ball_{n+k}\right) \\
  & \geq  c_0\, 2^{-n(d-2)}.   
\end{align*}
 Since  clearly 
$  \Cap_{d-4}(A) \geq  \Cap_{d-2}(A)$ for $A\subset \ball(0, 1)$,  we deduce
 by sub-additivity that:
\begin{align*}
  \sum_{n=0}^\infty 2^{n(d-2)}\,  \Cap_{d-4} (D \cap C_{n,1})
  & \geq   \sum_{n=0}^\infty 2^{n(d-2)}\,  \Cap_{d-2} (D \cap C_{n,1})
  \\
  & \geq   \sum_{\ell=0}^\infty 2^{\ell k (d-2)}\, \sum_{j=0}^{k-1} 
    \Cap_{d-2} (D \cap C_{\ell k+j,1}) 
  \\
    & \geq   \sum_{\ell=0}^\infty 2^{\ell k (d-2)}\, 
      \Cap_{d-2} (D \cap C_{\ell k, k})
  \\
  &=\infty .
\end{align*}
Thanks to~\cite[Theorem~3]{dlg1997}, we deduce that $y$ is super-regular
for $\open$, and thanks to Proposition~2 therein with the fact that $y\not
\in \open$, we get that~\eqref{eq:super-reg} holds. 

\medskip

Let $D'$ be the unbounded open connected component of
$\overline{\open}^c$ and $x\in D'$ be fixed. 
In a second step we shall prove that
\begin{equation}
  \label{eq:range-DD'}
   \N_x(\RR\cap \overline {\open}\neq \emptyset)= \N_x ( \RR\cap \open\neq \emptyset).
\end{equation}
Using for the inclusion the spatial Markov property of the Brownian snake, see
\cite[Theorem~2.4]{LeGall1995}, we get that $\N_x$-a.e.\ 
\[
  \{\RR\cap \overline {\open}\neq \emptyset\}= \{\RR\cap \open'^c\neq \emptyset\}  \subset \{X^{\open'} \neq 0\}, 
\]
where  $X^{\open'}$ is  the exit  measure of  $\open'$ for  the Brownian
snake      (see      Section       2      in~\cite{LeGall1995}      with
$\Omega=\r_+  \times \open'$).   As~\eqref{eq:reg-x-D} holds,  we deduce
from   ~\cite[Theorem~6.9]{LeGall1999}     (by    first    considering
$\open'\cap \ball(x,r)$ and then letting  $r$ goes to infinity and using
that $\RR$ is compact) that
\begin{equation}
   \label{eq:XD=RD-1}
  \N_x(\RR\cap \overline {\open}\neq \emptyset)=\N_x(X^{\open'} \neq 0).
\end{equation}

According to
  \cite[Lemma~2.1]{dynkin91}\footnote{
 Notice that \cite[Lemma~2.1]{dynkin91} is stated under $\p_\mu$ with
$\mu$ a finite measure on $\r^d$; and $\p_\mu$ can be seen as the distribution of
the Poisson point measure $\sum_{i\in \I} \delta_{W^i}(\rd W)$ of Brownian
snakes with intensity $\mu(dx)\, \N_x(\rd W)$. Using again the spatial
Markov property of the Brownian snake under $\N_x$ with the exit measure
 $X^B$ of $B$,  an open ball centered at $x$  whose closure lies
in $\open$, and Lemma~2.1 with $\mu=X^B$, we deduce
that~\eqref{eq:XD-RR-2} holds indeed under $\N_x$.}, we also have that $\N_x$-a.e.\ 
\begin{equation}
   \label{eq:XD-RR-2}
  \{X^{\open'}\neq 0\} \subset \{\RR\cap \open \neq \emptyset\}.
\end{equation}
Since the support of $X^{\open'}$ is a subset of $\partial \open'=
\partial_{\text{ext}}\open$, we 
deduce from~\eqref{eq:super-reg} that  $\int
\N_y(\RR
\cap \open\neq \emptyset) \, X^{\open '}(\rd y)=\infty $ if and only if
$X^{\open '}\neq
0$. By the spatial Markov property, we get that
\begin{equation}
   \label{eq:XD=RD-2}
  \N_x(\RR  \cap  \open \neq  \emptyset)
  =  \N_x\left(\int  \N_y(\RR  \cap \open \neq
    \emptyset) \, X^{\open '}(\rd y) >0 \right)
  =\N_x (X^{\open '} \neq 0).
\end{equation}
We thus deduce~\eqref{eq:range-DD'} from~\eqref{eq:XD=RD-1}
and~\eqref{eq:XD=RD-2}.
Then use the definition of $\BScap$ to conclude. 
\end{proof}

To conclude, Point~\ref{it:BS=6} is a direct consequence of the next
remark. 

\begin{remark}\label{rem:u1}
By \eqref{eq:scaling-Range} and symmetry, there is a function $u$
defined on $(1, \infty )$ such that
$u_{\ball(0,r)}(x)=r^{-2}u(r^{-1}|x|).$
By Lemma \ref{lem:uA}, $u$ is the maximal solution
on $(1, \infty )$ of
\begin{equation}
\label{eq:nonlin}
    u''(t) +
    \frac{(d-1)}{t}\,  u'(t) = 4 u(t)^2.
\end{equation} 
As in \cite[Section~5]{Delmas99}, we can expand it as 
  $u(t)=t^{2-d}  \sum_{n\geq 0} a_n  t^{-n(d-4)}$, where
  \[
    a_n=\frac{4}{n \delta (n\delta +1)} (d-2)^{-2} \sum_{k=0}^{n-1}a_k
    a_{n-k-1} \quad\text{for}\quad
    n\ge 1,
    \quad\text{and}\quad
     \delta=\frac{d-4}{d-2}\cdot
  \]
Solving \eqref{eq:nonlin} then reduces to determine $a_0>0$, the maximum number so that the series above converges when $t>1$. 
In particular, we have
$a_0=\lim_{t\rightarrow \infty } t^{d-2}u(t)=\BScap(\ball(0, 1))$, which is not known in general. 

Let us mention that for $d=6$,
\begin{equation}
  \label{eq:u-for-d6}
  u(t)=\frac{6}{(t^2-1)^{2}} \quad\text{for}\quad
  t>1
\end{equation}
is explicit  (this result seems to  be new in the  literature), and thus
$ \BScap(\ball(0, 1))=6 $.
\end{remark}

\section{Convergence of the branching capacity:  Proof of Theorem \ref{prop:A^epsilon}} \label{sec:convergence4}

To begin with, we recall a  comparison result between the discrete range
$\brwrange_c$ of the  branching random walk indexed by a  critical Galton-Watson tree
$ \ttree c$ and the Brownian  snake range $\RR$ under the normalized
excursion measure $\N^{(1)}$.
We define the Hausdorff distance between two  nonempty  compact   sets
$K, K'\subset\r^d$ by
\[
\hdist(K,K')=\max\pars*{\sup_{x\in K}d(x,K')\, ,\,     \sup_{x\in
    K'}d(x, K')}.
\]

By  Janson and  Marckert \cite[Theorem  2]{JM05}  (see Le  Gall and  Lin
\cite[formula  (18)]{legall_lowdim} for   the  statement  in  present
setting), on a common probability  space $(\Omega, {\mathcal F}, \P)$ we
may construct  a random set  $\RR_* \subset \r^d$, distributed  as $\RR$
under $\N_0^{(1)}$,  and a  sequence $(\brwrange_{(n)})_{n\ge 1}$  of random
sets in $\z^d$  such that for any $n$,  $\brwrange_{(n)}$ is distributed
as $ \brwrange_c $ conditioned on $\{\#\ttree c=n\}$ under $\P_0$, and almost surely,
\begin{equation} \label{thm:jm05}   
  \hdist\pars*{n^{-1/4} \brwrange_{(n)} ,\, \coeffm\RR_*}\rightarrow0
  \quad\text{as}\quad n\to\infty, 
\end{equation}
where, with $\sigma$ and $\brwm$ defined in~\eqref{hyp-tree}  and \eqref{hyp-brw}, 
\[
\coeffm=\left(\frac{2}{\sigma}\right)^{1/2} \, \brwm^{1/2}.
\]

Now we are ready to prove Theorem \ref{prop:A^epsilon} on admitting
Theorem \ref{theo:compareBcap}. Heuristically, we first rephrase
branching capacity in terms of hitting probability. Then we show that
when the (discrete) branching random walk has diameter of order $n$,
typically there are about $n^4$ vertices on the tree. Finally, we
compare discrete and continuous hitting events (up to the error of an
$\varepsilon$-neighborhood) by  \eqref{thm:jm05}. 

We recall that $o_n(1)$  denotes a function of $n$ which  goes to $0$ as
$n$ goes to infinity.

\begin{proof}[Proof of Theorem \ref{prop:A^epsilon} by admitting Theorem
  \ref{theo:compareBcap}.] 
  The  proofs  of  \eqref{upper:nK}  and  \eqref{lower:Open}  need  some
  preparations. At first we may assume for simplicity that the offspring
  distribution $\mu$ is aperiodic, as one  can easily adapt the proof
  line  by   line  for   the  periodic   case.   In   particular,  given
  aperiodicity,  by  Dwass  \cite{dwass}  and the  local  central  limit
  theorem (see \cite[Theorem
  2.3.9]{lawler2010random}), we have
\begin{align}\label{eq:ctree} 
  \P(\#\ttree c=n) = \Big(\frac{1}{\sigma\sqrt{2\pi}}+o_n(1)\Big)\,  n^{-3/2}.
\end{align}

Secondly,   let   $K\subset\r^d$   be   a   given   compact   set.   Let
$x\in \r^d\backslash \{0\}$, with $|x|$  large enough, and $\eta\in(0,1/2)$ be such
that, with $\alpha$ and $C$ from Theorem \ref{theo:compareBcap},
\begin{equation}
  \label{K-x-eta}
  K\subset\ball(0, \eta' |x|) \quad\text{with}\quad
  \eta=C\eta'^\alpha.
\end{equation} 
By Theorem \ref{theo:compareBcap}, for every $n\ge 1$   we have
\begin{equation}  
\label{eq:doubleeq}
\frac{\P_{\lfloor nx\rfloor}
  \pars*{\brwrange_c \cap n K\neq \emptyset}}{(1+\eta) g(\lfloor
  nx\rfloor)}
\le \Bcap(nK) \le \frac{\P_{\lfloor nx\rfloor}
  \pars*{\brwrange_c \cap nK\neq \emptyset}}{(1-\eta) g(\lfloor
  nx\rfloor)}\cdot
\end{equation}

{\bf Proof of \eqref{upper:nK}.} It suffices to show that for any $K$ compact set and $x\neq0$ satisfying $K\subset \ball(0, |x|/2)$   and  $\varepsilon>0$, \begin{equation}  \label{inter-nK-up1} \limsup_{n\rightarrow\infty}n^2\, \P_{\lfloor nx\rfloor}\pars*{\brwrange_c\cap nK\neq \emptyset}
\le \frac{1}{\sigma}\,  \N_{\coeffm^{-1}x}\Big(\RR \cap   (\coeffm^{-1}K)^{\varepsilon} \neq \emptyset\Big).
 \end{equation}
Indeed,  for any $\eta\in(0, \frac12)$, let $|x|$ be large enough such
that \eqref{K-x-eta} holds.  Applying the upper bound in
\eqref{eq:doubleeq} and using  the asymptotic of $g$ in
\eqref{c_g}, Equation~\eqref{inter-nK-up1} yields that 
\begin{align*}
  \limsup_{n\rightarrow\infty}\frac{\Bcap(nK)}{n^{d-4}}
  & \le \frac {1+\eta}
    {(1-\eta)\sigma c_{g}}\,
    \frac{\absolute*{x}_\theta^{d-2}}{\absolute*{\coeffm^{-1} x}^{d-2}}\,  
    \absolute*{\coeffm^{-1} x}^{d-2}
    \N_{\coeffm^{-1}x}\Big(\RR \cap   (\coeffm^{-1}K)^{\varepsilon}
    \neq \emptyset\Big).
\end{align*}
Recall that $\coeffm=\left(\frac{2}{\sigma}\right)^{1/2} \, \brwm^{1/2}$. Letting $x\rightarrow\infty$ and
then $\eta\rightarrow 0$, we deduce from Theorem \ref{p:snakecap} and
the scaling properties of $\BScap$, see \eqref{eq:BScap_scaling}, that 
\begin{equation}  \label{eq:BCapdouble}
\limsup_{n\rightarrow\infty}\frac{\Bcap(nK)}{n^{d-4}} \le  \frac {2}
{\sigma^2{c_{g}}}\,  \BScap(\brwm^{-1/2}\cnbd K\varepsilon).
\end{equation}
Then    let     $\varepsilon\to    0$     and    use     the    monotone
property~(\ref{it:cap-Kdec})    of   the    Choquet   capacity    to   get
\eqref{upper:nK}.

\medskip

It remains to show \eqref{inter-nK-up1}. 
For any $\delta \in (0,1)$, we have
\begin{align}
\P_{\lfloor nx\rfloor}\pars*{\brwrange_c \cap nK\neq \emptyset}
=&\Big(\sum_{j<  \delta n^4} + \sum_{\delta n^4 \le j \le \delta^{-1} n^4} + \sum_{j> \delta^{-1} n^4}\Big)\, \P_{\lfloor nx\rfloor}\pars*{\brwrange_c \cap nK \neq \emptyset , \,   \#\ttree c=j}
\nonumber\\
=: &\eqref{Tc3terms0}_1+ \eqref{Tc3terms0}_2+ \eqref{Tc3terms0}_3. \label{Tc3terms0}
\end{align}

We claim that when $\delta$ is small, both $\eqref{Tc3terms0}_1 $ and  $ \eqref{Tc3terms0}_3$ are negligible in the sense that they do not contribute to the $\limsup$ term in \eqref{inter-nK-up1}. 
Indeed,  by \eqref{eq:ctree},  we have 
$$  \eqref{Tc3terms0}_3
\le
\sum_{j> \delta^{-1} n^4} \P\pars*{\#\ttree c=j} 
\lesssim \sum_{j> \delta^{-1} n^4} j^{-3/2}  \lesssim \delta^{1/2} n^{-2}, $$
yielding that \begin{equation}  \limsup_{\delta\to 0+} \limsup_{n\to\infty} \big(n^2 \times \eqref{Tc3terms0}_3 \big)=0.  \end{equation}

 For $\eqref{Tc3terms0}_1$, we remark that
 \[
   \P_{\lfloor nx\rfloor}\pars*{\brwrange_c \cap nK \neq \emptyset , \,   \#\ttree c=j}
=\P_{0}\pars*{\brwrange_c \cap (nK- \lfloor nx\rfloor) \neq \emptyset , \,   \#\ttree c=j}.
\]
Since $K\subset \ball(0, |x|/2)$,  for any $y \in nK- \lfloor
nx\rfloor$, we have $|y|\ge n |x|/2 - \sqrt{d} >
n|x|/3$ for all  $n\ge n_0(x)$ and $n_0(x)$ large enough. Therefore, we have 
\begin{eqnarray} 
\eqref{Tc3terms0}_1 
&\le&  \sum_{j<  \delta n^4}  \P_{0}\pars*{\max_{z\in \brwrange_c} |z| \ge  \frac{n|x|}3, \, \#\ttree c =j}
 \nonumber \\
 &\lesssim &
\sum_{j=1}^{\delta n^4} j^{-3/2} \P_{0}\parsof*{\max_{z\in \brwrange_c} |z| \ge  \frac{n|x|}3}{ \#\ttree c=j}, \nonumber
\end{eqnarray}
where the last line follows from \eqref{eq:ctree}.
Using  \cite[Eq.~(4.24)]{bai2022convergence} and Markov's inequality, we see that for all $j$
\[
\P_{0}\parsof*{\max_{z\in \brwrange_c} |z| \ge \frac{n|x|}3}{ \#\ttree c=j} \lesssim (n|x|)^{-5} j^{5/4}.
\] 
We deduce that 
\[
\eqref{Tc3terms0}_1  \lesssim  \sum_{j=1}^{\delta n^4} j^{-3/2} \frac{j^{5/4}}{(n|x|)^5}
\lesssim \delta^{3/4} n^{-2} |x|^{-5}.
\]
This implies that
\begin{equation}
  \limsup_{\delta\to 0+} \limsup_{n\to\infty} \big(n^2 \times
  \eqref{Tc3terms0}_1 \big)=0.
\end{equation}

For $\eqref{Tc3terms0}_2$, 
by \eqref{eq:ctree} again,
we have 
\begin{equation}   \label{eq:Tc3terms0-up}
\eqref{Tc3terms0}_2= \pars*{\frac{1}{\sigma\sqrt{2\pi}}+o_n(1)} \,
\sum_{\delta n^4 \le j \le \delta^{-1} n^4} j^{-3/2} \P
\pars*{\brwrange_{(j)}\cap (nK-\lfloor 
  nx\rfloor) \neq \emptyset }.   
 \end{equation}

 By \eqref{thm:jm05}, we have
\[
  \max_{\delta n^4 \le j \le \delta^{-1} n^4} \P\pars*{d_H(j^{-1/4}
    \coeffm^{-1}\brwrange_{(j)},  \RR_*) > \varepsilon \delta^{1/4}/2}
  =o_n(1),
\]
where $o_n(1)$ may depend  on $\varepsilon, \delta$ but $o_n(1) \to 0$
as $n\to \infty$, and $\P$ is the coupling law that ensures \eqref{thm:jm05}. 
   To shorten notations, for $K, K'\subset \r^d$ compact, we write
   \[
     \dmin(K, K')= \min \{|x-x'|\,\colon\, x\in K, x'\in K'\}.
   \]
   Notice that  $\dmin$ is not a distance. We will use that
   for $K, K', K''\in \cK$,
  $x\in \r  ^d$ and $\varepsilon\geq 0$
   \begin{align*}
     &\dmin(K, K')\leq  \varepsilon \, \Longleftrightarrow\, 
       K^\varepsilon\cap K'\neq \emptyset \, \Longleftrightarrow\,  K \cap
       K'^\varepsilon\neq \emptyset,\\
     &    \dmin (K, K'+x)\leq  \dmin (K', K')+ |x|,\\
    &    \dmin (K, K'')\leq  \dmin (K', K'')+ d_H(K, K').
   \end{align*}
   Note that for all large $n$ and $\delta n^4 \le j \le \delta^{-1}
   n^4$,
\begin{multline*}
   \P\pars*{\brwrange_{(j)}\cap (nK -\lfloor nx\rfloor) \neq \emptyset }
\\ 
\begin{aligned}
   &=
\P\pars*{\dmin(  \coeffm^{-1} \brwrange_{(j)}, \coeffm^{-1}(nK-
  \lfloor nx\rfloor)) =0} 
\\
&\le
\P\pars*{\dmin (n^{-1}j^{1/4} \RR_*,    \coeffm^{-1}( K-x )) \le
  \varepsilon } +  \P\pars*{d_H(\coeffm^{-1}\brwrange_{(j)}, j^{1/4}
  \RR_*) > \varepsilon (\delta j)^{1/4}/2}\\ 
&=
\P\pars*{\dmin (n^{-1}j^{1/4} \RR_*,    \coeffm^{-1}( K-x )) \le \varepsilon } +  o_n(1),
\end{aligned}
\end{multline*}
where   in   the    first   inequality   we   have    used   that,   for
$R_j:=\coeffm^{-1}    \brwrange_{(j)}$, $ K':=
\coeffm^{-1}( nK-\lfloor nx\rfloor )$ and $   R_*:=j^{1/4}    \RR_*  $, we have
\[
  \dmin(R_j,K')=0 \implies \dmin (R_*, K') \leq  d_H(R_j, R_*),
\]
and thus on $\{\dmin(  \coeffm^{-1}
\brwrange_{(j)}, \coeffm^{-1}(nK- \lfloor nx\rfloor)) =0\}$, we have
for $\varepsilon n \geq 2 \sqrt{d}$
\begin{align*}
    d_H(\coeffm^{-1}\brwrange_{(j)}, j^{1/4}  \RR_*) \le \varepsilon
  (\delta j)^{1/4}/2
&  \implies
    \dmin (j^{1/4} \RR_*,    \coeffm^{-1}(
     nK-\lfloor nx\rfloor)) \le \varepsilon (\delta j)^{1/4}
   /2\\
  &  \implies
    \dmin (n^{-1}j^{1/4} \RR_*,    \coeffm^{-1}(
    K-x ))
    \le \varepsilon.
\end{align*}
In the same way,  we have
\begin{align}
\P\pars*{\brwrange_{(j)}\cap (nK^{2\varepsilon}-\lfloor nx\rfloor) \neq \emptyset }
 &= \P\pars*{\dmin(   \coeffm^{-1}\brwrange_{(j)},  \coeffm^{-1}( nK-
   \lfloor nx\rfloor)) \le 2\varepsilon }  
 \nonumber \\
& \ge
\P\pars*{\dmin(n^{-1}j^{1/4} \RR_*,     \coeffm^{-1}(K-x) ) \le
                \varepsilon }
   \nonumber \\
& \hspace{2cm}- \P\pars*{d_H(\coeffm^{-1}\brwrange_{(j)}, j^{1/4}  \RR_*) > \varepsilon (\delta j)^{1/4}/2}
\nonumber \\
&\ge 
\P\pars*{\dmin(n^{-1}j^{1/4} \RR_*,     \coeffm^{-1}(K-x) ) \le \varepsilon } + o_n(1). 
\label{lower:inter-2}
\end{align}
We will use \eqref{lower:inter-2} in the proof of  \eqref{lower:Open}. 
 Going back to \eqref{eq:Tc3terms0-up}, we obtain that 
\begin{align*} 
  \eqref{Tc3terms0}_2
  &\le
\frac{1}{\sigma\sqrt{2\pi}} \, \sum_{\delta n^4 \le j \le \delta^{-1}
    n^4} j^{-3/2}\, \P\pars*{\dmin(n^{-1}j^{1/4} \RR_*,
    \coeffm^{-1}( K-x )) \le \varepsilon }  + o(n^{-2}) \\
  &\le \frac{1}{\sigma\sqrt{2\pi}} \, \int_0^\infty t^{-3/2}
    \P\pars*{\dmin(n^{-1} t^{1/4} \RR_*,
    \coeffm^{-1}( K-x )) \le \varepsilon  } \, \rd t 
+ o(n^{-2}) ,
\end{align*}
where in the last inequality we have replaced the sum over $j$ by the integral over $t$ by monotonicity. 
Using a change of variables $t= n^4 s$ and that $\RR_*$ under $\P$
is distributed as $\RR$ under the normalized excursion measure $\N_0^{(1)}$, we get that 
\begin{align} 
\limsup_{n\to\infty} 
\big( n^2 \times \eqref{Tc3terms0}_2\big)   
  &\le \frac{1}{\sigma\sqrt{2\pi}}\,\int_0^\infty
    s^{-3/2}\,  \N^{(1)}_{0}\pars*{\dmin(s^{1/4} \RR,  \coeffm^{-1}( K-x ))
    \le \varepsilon  } \rd s 
\nonumber\\
&=
 \frac{1}{\sigma}\,  \N_0(\dmin(  \RR, \coeffm^{-1}( K-x )) \le \varepsilon)
 \label{eq:int=Nx}
 \\
 &=
 \frac{1}{\sigma}\,  \N_{\coeffm^{-1}x}(\RR \cap
   (\coeffm^{-1}K)^{\varepsilon} \neq \emptyset),  \nonumber
\end{align}
where \eqref{eq:int=Nx} is due to~\eqref{eq:scaling-ISE}. This completes the proof of \eqref{inter-nK-up1}, and then of \eqref{upper:nK}. 

\medskip
{\bf Proof of  \eqref{lower:Open}.} The proof proceeds in a similar way to that of \eqref{upper:nK}. We are going to show that for any $K$ compact set and $x\neq0$ satisfying $K\subset \ball(0, |x|/2)$   and any $\varepsilon>0$, 
\begin{equation} 
\liminf_{n\rightarrow\infty}n^2\, \P_{\lfloor nx\rfloor}\pars*{\brwrange_c \cap n\cnbd K{2\varepsilon}\neq \emptyset}
\ge \frac{1}{\sigma}  \N_{\coeffm^{-1}x}\Big(\RR \cap   (\coeffm^{-1}K)  \neq \emptyset\Big). \label{inter-nK-low1}
\end{equation}
Indeed, given \eqref{inter-nK-low1}, we let $x \to\infty$ and successively use \eqref{eq:doubleeq}, \eqref{c_g} and Theorem \ref{p:snakecap} to obtain that
\begin{equation}\label{eq:infcompact}  
\liminf_{n\rightarrow\infty}\frac{\Bcap(n\cnbd K{2\varepsilon})}{n^{d-4}} \ge \frac {2}
{\sigma^2{c_{g}}}\, \BScap(\brwm^{-1/2} K). 
\end{equation}
Let $\open \subset\r^d$ be open. For any compact $K\subset \open$, there
exists       $\varepsilon=\varepsilon(\open,K)>0$        such       that
$\cnbd   K{2\varepsilon}\subset   \open$.    Then,   the   lower   bound
\eqref{eq:infcompact} implies that
\begin{equation*}
\liminf_{n\rightarrow\infty}\frac{\Bcap(n \open)}{n^{d-4}}  \ge \frac {2}
{\sigma^2{c_{g}}}\, \BScap(\brwm^{-1/2} K).
\end{equation*}
Since $\BScap$ is a
Choquet capacity and since the Borel sets are capacitable, we
obtain~\eqref{lower:Open} as a consequence of~\eqref{eq:capacitable}.

It remains to prove \eqref{inter-nK-low1}. Let $\delta\in (0, 1/2)$. 
By \eqref{eq:ctree}, \eqref{Tc3terms0} and \eqref{lower:inter-2}, we
deduce  that
\begin{multline*}
 \P_{\lfloor nx\rfloor}\pars*{\brwrange_c \cap n\cnbd K{2\varepsilon}\neq
   \emptyset}\\
 \begin{aligned}
& \ge \pars*{\frac {1}{\sigma\sqrt{2\pi}}+o(1)} \, \sum_{\delta n^4 \le
  j \le \delta^{-1} n^4} j^{-3/2}
\P\pars*{\brwrange_{(j)}\cap (nK^{2\varepsilon}-\lfloor nx\rfloor) \neq \emptyset }
\\
&\ge
\frac {1}{\sigma\sqrt{2\pi}} \, \sum_{\delta n^4 \le j \le \delta^{-1}
  n^4} j^{-3/2} \P\pars*{\dmin(n^{-1}j^{1/4} \RR_*,
  \coeffm^{-1}(K-x) ) \le \varepsilon }  
+ o(n^{-2}) \\
&\ge
\frac {1}{\sigma\sqrt{2\pi}} \, \int_{\delta n^4+1}^{\delta^{-1} n^4}
t^{-3/2} \P\pars*{\dmin(n^{-1}t^{1/4} \RR_*,     \coeffm^{-1}(K-x) )
  \le \varepsilon }  \, \rd t  
+ o(n^{-2}) ,
\end{aligned}
\end{multline*}
 where for the last inequality we have used the monotonicity on $t$ of the probability term. Using the change of variables $t= n^4 s$ and that $\RR_*$ under $\P$
is distributed as $\RR$ under the normalized excursion measure $\N_0^{(1)}$,
we get that
\begin{multline*}
\liminf_{n\rightarrow\infty}n^2\, \P_{\lfloor
    nx\rfloor}\pars*{\brwrange_c
    \cap n\cnbd K{2\varepsilon}\neq \emptyset}
  \\
  \ge  \frac {1}{\sigma\sqrt{2\pi}}
  \int_\delta^{\delta^{-1}} s^{-3/2}\,  \N^{(1)}_{0}\pars*{\dmin (s^{1/4}
    \RR,  \coeffm^{-1}( K-x )) \le \varepsilon  } \, \rd s. 
\end{multline*}
We let $\delta$ goes to $0$ and apply the monotone convergence theorem
and then \eqref{eq:int=Nx}. Then we get \eqref{inter-nK-low1} which
completes the proof of  \eqref{lower:Open}. 
\end{proof}

\section{Hitting probabilities and the branching capacity: Proof of Theorem \ref{theo:compareBcap}} \label{s:appendix}

\subsection{Preliminaries}\label{sec:tadj} The material of this subsection is derived from Zhu \cite{zhu2016critical}. 
We  introduce an  adjoint  Galton-Watson  tree which  is  a random  tree
derived from the critical Galton-Watson  tree $\ttree{c}$, with the only
modification   being   made  at   the   root.   In  the   adjoint   tree
$\ttree{\text{adj}}$    the    root     has    offspring    distribution
$\widetilde\mu=(\widetilde\mu(k))_{k\ge  0}$  instead   of  $\mu$,  with
$$\widetilde{\mu}(k)=\sum_{j=k+1}^\infty   \mu(j),$$   while   all   other
vertices retain the original offspring distribution $\mu$.

We then construct  an infinite tree $\ttree \I$ as  follows. We begin by
constructing             a              semi-infinite             branch
$\{\varnothing_0,  \varnothing_1, ...,  \varnothing_n, ...\}$  rooted at
$\varnothing_0$,  then we  root  a tree  $\ttree{\text{adj}}^i$ on  each
$\varnothing_i$, $i\ge 0$,  where $(\ttree{\text{adj}}^i)_{i\geq 0}$ are
independent trees distributed as  $\ttree{\text{adj}}$.  To every finite
or infinite tree $\ttree  \alpha$, with $\alpha\in\{\text{adj},\I\}$, we
define the  random walk $\tbrw  \alpha{}$ indexed by $\ttree  \alpha$ in
the  same way  as the  random  walk $\tbrw  c{}$ is  indexed by  $\ttree
c$. Under $\P_x$ those BRW are started from $x\in \z^d$.
Furthermore, we consider the sub-tree
$\ttree -$ of $\ttree \I$ with root $\varnothing_1$ defined by
$\ttree -= \ttree \I \backslash \ttree{\text{adj}}^0$, as shown in Fig.~\ref{fig:sec4.1}. 
We also   consider the restriction of the  BRW $\tbrw I{}$ to $\ttree -$
which we denote by  $\tbrw -{}$.

We write  $\brwrange_\alpha$ the range of $\tbrw \alpha{}$  for $\alpha\in\{\text{adj},\I,-\}$;   this is consistent with the notation  $\brwrange_c$.

Recall $d\ge 5$. 
Let   $K\subset\z^d$  be   a   finite  and   nonempty   set.  
We shall consider the hitting probabilities from $x\in \z^d$ by
\begin{equation}\label{def-rK}
\begin{aligned}
\phit \alpha (x):=\mathbf P_x(\brwrange_\alpha\cap
K\ne\emptyset)\quad\text{for}\quad
\alpha\in\{c,I,\text{adj},-\}.
\end{aligned}
\end{equation}
Notice that
under $\P_x$, the BRW $\tbrw - {}$ is distributed as $\tbrw \I {}$
started at $S_1$ and thus
\begin{equation}
  \label{eq:pK-pKI}
\phit - (x)=\E_x[\phit \I (S_1)], 
\end{equation}
where we recall that $S_1$ is one step of a random walk, distributed as $\theta.$
We shall consider the 
the  escape probabilities of $K$ by  $V_-$ 
\[
  {\bf e}_K(x):=1-\phit - (x).
\]
We deduce from Benjamini and Curien \cite[Theorem~0.1]{BC-12} that ${\bf e}_K$ is not
identically zero on $K$.
It is also shown in Zhu \cite[Eq.~(1.1)]{zhu2016critical} that
\begin{equation}
  \label{Bcap-eK}
\Bcap(K)= \sum_{a\in K} {\bf e}_K(a). 
\end{equation}
Using the monotonicity and the translation
invariance of $\Bcap$, we deduce that for any $a\in K$,
\begin{equation}
  \label{bcap>0}
    \Bcap(K) \ge \Bcap(\{a\})= \Bcap(\{0\})>0. 
\end{equation} 

We also recall that there exists some positive constant $C$,  only depending on $d$,  such that for all $r\ge 1 $ and  $K \subset \ball(0, r)$, we have \begin{equation} \Bcap(K) \le \Bcap(\ball(0,r))\le C\,  r^{d-4}. \label{upp-bcapK}    \end{equation}

\begin{figure}
\includegraphics[height=3.3cm]{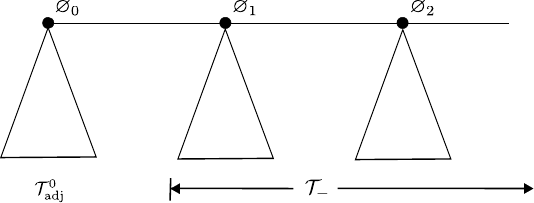}
\caption{\scriptsize
An illustration of $\ttree \I =\ttree
-\cup\ttree{\text{adj}}^0$.}
\label{fig:sec4.1}
\end{figure}

We remark that  for any $x\not\in K$,
\begin{equation}
\phit I (x) = \phit \adj (x) + (1- \phit \adj (x)) \phit
-(x).  \label{qrx}
\end{equation}

Moreover, these hitting probabilities are strongly related to a random walk $(S^\kappa)$ with killing rate $\phit\adj$. Denote by $\partial$ a cemetery point, and let $(S^\kappa)$ be a random walk on $\z^d \cup\{\partial\}$ such that for every $x\in \z^d$,  
\begin{equation}\label{eq:RW_with_killing}
\pp(S^\kappa_{n+1}=y | S^\kappa_n=x)= 
\begin{cases}    \phit\adj (x) , & \mbox{ if $y=\partial$},
\\
(1- \phit \adj (x))\theta(y-x) , & \mbox{ if $y\in \z^d$}.
\end{cases}
\end{equation}
The Green  function  of $S^\kappa$ is defined by
\begin{equation}  
G_K(x,y)
:=\sum_{n=0}^\infty \pp_x  (S^\kappa_n=y)
= \sum_{n=0}^\infty \ee_x \Big[{\bf 1}_{\{ S_n=y\}}\prod_{i=0}^{n-1}(1-\phit\adj{(S_i)})\Big].
\label{def-GK} \end{equation}

By \eqref{qrx}, we get that \begin{equation}    
\phit I(x)=\sum_{n=0}^\infty \ee_x \Big[ \phit\adj{(S_n)}\, \prod_{i=0}^{n-1}
(1-\phit\adj(S_i)) \Big] ,
\label{eq:q=gr} \end{equation}
with the convention $\prod_\emptyset:=1$. It is then immediate from
\eqref{eq:q=gr} and~\eqref{def-GK}
that
\begin{eqnarray} 
\phit I (x)= \pp_x((S^\kappa)\text{ survives})=\sum_{y\in\z^d}G_K(x,y) \phit\adj (y).  \label{eq:q=gr1} 
\end{eqnarray}
Moreover, an important relationship between $\phit c$ and $G_K$ has been obtained in Zhu \cite[Eq.~(5.4) and~(5.5)]{zhu2016critical}:
\begin{equation} 
\phit c(x)=\sum_{a\in K} G_K(x, a)= \sum_{n=0}^\infty
\pp_x(S^\kappa_n\in K)
\quad\text{for}\quad
x\in \z^d. \label{def:pKx}
\end{equation}

We summarize  some comparisons between the  probabilities $\phit \alpha (x)$ for $
\alpha\in\{c,I,\text{adj},-\}$. 
For $x\in K$, we note that $\phit \alpha (x)=1$ for  $
\alpha\in\{c,I,\text{adj}\}$, whereas $\phit -(x)\leq 1$.

\begin{lemma} \label{l:compar-palpha} Let $d\ge 5$ and $K\subset\z^d$ be
  a finite nonempty set.
    Under \eqref{hyp-tree} and    \eqref{hyp-brw},  
 for any $x\in \z^d\backslash K$, we have
\begin{align}
  \label{compar-pKr_K} 
    \frac{2( 1-\mu(0))}{\sigma^2}    \, \phit \adj (x)
    &\le   \phit c (x)
      \le \frac1{\mu(0)} \, \phit \adj (x),
    \\
     \label{eq:p->padj-pcpI}
   \phit c (x) \leq  \phit \I (x)
   &\quad\text{and}\quad
       \phit \adj (x)
       \leq  \frac{\sigma^2}{2} \,  \phit - (x),\\
\label{comp:pK-pKI}
\phit -(x) &\le    \phit I(x)
    \le \left(\frac{\sigma^2}{2}+1\right) \phit -(x).
 \end{align}
\end{lemma}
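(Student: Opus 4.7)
The plan is to reduce all comparisons to generating function identities on the tree root, combined with the functional equations satisfied by $\phit I$ and $\phit -$. First, decomposing the BRW at its root, for $x\notin K$ I would obtain
\[
\phit c(x)=1-f(t),\qquad \phit\adj(x)=1-\widetilde f(t)=\frac{f(t)-t}{1-t},
\]
where $t:=1-\E_x[\phit c(S_1)]$ and $f,\widetilde f$ are the probability generating functions of $\mu$ and $\widetilde\mu$. From $\widetilde\mu(k)=\sum_{j>k}\mu(j)$ one obtains $\widetilde f(s)=(1-f(s))/(1-s)$, which combined with the above yields the key identity $\phit c(x)=\E_x[\phit c(S_1)]\,(1-\phit\adj(x))$.

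For~\eqref{compar-pKr_K}, I would use elementary bounds on $f$. Convexity of $f$ on $[0,1]$ gives $f(t)\le \mu(0)(1-t)+t$, so $\phit c(x)\ge (1-\mu(0))(1-t)$; Taylor's formula at $1$ with $f'(1)=1$, $f''(1)=\sigma^2$ and monotonicity of $f''$ gives $f(t)-t\le \frac{\sigma^2}{2}(1-t)^2$, so $\phit\adj(x)\le \frac{\sigma^2}{2}(1-t)$. Their ratio produces the lower bound $2(1-\mu(0))/\sigma^2$. For the upper bound, convexity of $\widetilde f$ gives $\phit\adj(x)\ge \mu(0)(1-t)$, while the key identity yields $\phit c(x)\le 1-t$, hence $\phit c(x)\le \phit\adj(x)/\mu(0)$.

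For the first inequality in~\eqref{eq:p->padj-pcpI}, I would combine~\eqref{qrx} and~\eqref{eq:pK-pKI} to obtain $\phit I(x)=\phit\adj(x)+(1-\phit\adj(x))\E_x[\phit I(S_1)]$, then subtract the key identity to see that $\Delta:=\phit I-\phit c$ satisfies
\[
\Delta(x)=\phit\adj(x)+(1-\phit\adj(x))\,\E_x[\Delta(S_1)]\quad\text{on } K^c,
\]
with $\Delta\equiv 0$ on $K$. Since $\phit\adj(x)>0$ everywhere (by irreducibility of $\theta$ and non-triviality of $\mu$) and $\Delta(x)\to 0$ as $|x|\to\infty$ (by transience of the BRW ranges in $d\ge 5$), a maximum-principle argument at a hypothetical negative minimum $x_0$ of $\Delta$ produces a contradiction using $\phit\adj(x_0)>0$; hence $\Delta\ge 0$. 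The second inequality of~\eqref{eq:p->padj-pcpI} follows from combining the Taylor bound $\phit\adj(x)\le \frac{\sigma^2}{2}\E_x[\phit c(S_1)]$ from the previous step with $\phit c\le \phit I$ and the identity $\phit -(x)=\E_x[\phit I(S_1)]$ from~\eqref{eq:pK-pKI}.

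Finally,~\eqref{comp:pK-pKI} follows from~\eqref{qrx}: the lower bound reads $\phit I-\phit -=\phit\adj(1-\phit -)\ge 0$, and the upper bound uses $\phit I\le \phit\adj+\phit -\le (\frac{\sigma^2}{2}+1)\phit -$ by the just-proved $\phit\adj\le \frac{\sigma^2}{2}\phit -$. The only delicate step I anticipate is the maximum principle for $\phit c\le \phit I$, where establishing the decay $\Delta(x)\to 0$ at infinity may require some care.
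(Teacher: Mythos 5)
Your proposal is correct in substance, but it takes a genuinely different route from the paper at two points, and one of them has a soft spot. For \eqref{compar-pKr_K} and the bound $\phit\adj(x)\le\frac{\sigma^2}{2}\,\E_x[\phit c(S_1)]$, your generating-function computation (the identity $\widetilde f(s)=(1-f(s))/(1-s)$, convexity, and Taylor at $1$ with $f''(1)=\sigma^2$) is a clean, self-contained derivation of what the paper simply imports from Zhu's Eq.~(8.6) and (8.4); the paper's version of the second bound is a union bound over the $k_\varnothing$ children of the adjoint root, $\phit\adj(x)\le\sum_i i\,\widetilde\mu(i)\,\E_x[\phit c(S_1)]$, which gives the same constant $\sum_i i\,\widetilde\mu(i)=\sigma^2/2$. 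The real divergence is the inequality $\phit c\le\phit\I$. The paper gets it in one line from identities already displayed in its preliminaries: by \eqref{eq:q=gr1} and \eqref{def:pKx},
\[
\phit \I(x)=\sum_{y\in\z^d}G_K(x,y)\,\phit\adj(y)\;\ge\;\sum_{a\in K}G_K(x,a)\,\phit\adj(a)=\sum_{a\in K}G_K(x,a)=\phit c(x),
\]
since $\phit\adj\equiv 1$ on $K$. Your maximum-principle argument for $\Delta=\phit\I-\phit c$ is workable, but it genuinely needs the decay $\Delta(x)\to 0$ at infinity (without it, $\inf_x\phit\adj(x)=0$ and the contradiction does not close), and $\phit\I(x)\to 0$ is a nontrivial transience statement that you only gesture at. It can be supplied without circularity — from \eqref{eq:q=gr1}, $G_K\le g$, your own \eqref{compar-pKr_K}, and the decay $\phit c(x)\lesssim|x|^{2-d}$ implicit in \eqref{def-bcap}, one gets $\phit\I(x)\lesssim\sum_y g(x,y)(1+|y|)^{2-d}\lesssim|x|^{4-d}$ — but once you invoke \eqref{eq:q=gr1} you may as well use the one-line proof above. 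The remaining steps (the second half of \eqref{eq:p->padj-pcpI} via $\E_x[\phit c(S_1)]\le\E_x[\phit\I(S_1)]=\phit-(x)$, and \eqref{comp:pK-pKI} from \eqref{qrx}) coincide with the paper's.
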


\begin{proof}   The   claim   \eqref{compar-pKr_K}  follows   from   the
  construction      of     $\tbrw      {\text{adj}}{}$,     see      Zhu
  \cite[Eq.~(8.6)]{zhu2016critical}.       The     first      inequality
  of~\eqref{eq:p->padj-pcpI}   is   a  consequence   of~\eqref{eq:q=gr1}
  and~\eqref{def:pKx}.   We  deduce   then  from~\eqref{eq:pK-pKI}  that
  $\E_x[\phit  c (S_1)]\leq  \E_x[\phit \I  (S_1)]= \phit  - (x)  $.  To
  obtain   the   second   inequality   of~\eqref{eq:p->padj-pcpI},   use
  \cite[Eq.~(8.4)]{zhu2016critical} to  get for $x\not\in K$  that, with
  $k_\varnothing$   the   number   of   children   for   the   root   of
  $\ttree{\text{adj}}$  and, for $i\in \{1, \ldots, k_\varnothing\}$,    $\brwrange^i$  the   range  of   the  BRW
  $V_\text{adj}$ restricted to the descendants of the $i$-th child of the
  root,
\[
  \phit \adj (x)
= \P_x (\brwrange^i \cap K \neq \emptyset \quad\text{for $i\leq
    k_\varnothing$})
 \leq  \sum_{i\geq 0} i \widetilde \mu(i) \,  \E_x[\phit c (S_1)]
 \leq   \frac{\sigma^2}{2}\, \phit - (x) .
\]
Lastly,  the  inequalities~\eqref{comp:pK-pKI} are a direct consequence
of~\eqref{qrx} and~\eqref{eq:p->padj-pcpI}. 
\end{proof}

We shall consider the first {exit} times from $B\subset \z^d$ by  the
$\theta$-random walk $(S_n)$  and 
$(S^\kappa_n)$
\begin{equation}
   \label{eq:def-tau}
   \tau_{S}(B):=\inf\{n\ge 1\, \colon\,  S_n \not\in B\}
   \quad\text{and}\quad
   \tau_{S^\kappa}(B):=\inf\{n\ge 1\, \colon\,   S^\kappa_n \not\in B\}.
\end{equation}
Then, we  define the harmonic measure of $(S^\kappa)$ with respect to every
nonempty set $B\subset\z^d$ by, for $x, y\in \z^d$
\begin{align}
H^B_K(x,y)&:=
\sum_{n=0}^\infty \pp_x  \big(S^\kappa_n=y, \tau_{S^\kappa}(B) \ge n)     \label{def-HBK0}
 \\
 &=   \sum_{n=0}^\infty \ee_x \Big[\prod_{i=0}^{n-1}(1-\phit\adj(S_i)),
     S_n=y, \tau_{S}(B) \ge n\Big].  \label{def-HBK}
 \end{align}
We note that neither $x$ nor $y$ needs to be in $B$.
We also note that for $x\in B$ and $ y\not\in B$
\begin{equation}
   \label{eq:HxBBc}
    H^B_K(x,y) =\pp_x(S^\kappa_ { \tau_{S^\kappa}(B)}=y)
= \ee_x \Big[\prod_{i=0}^{\tau_S(B)-1}(1-\phit\adj(S_i)),
     S_{\tau_{S}(B)}=y\Big].
  \end{equation}

By the  Markov property  of $S^\kappa$, we  easily obtain  the following
first  entrance  and  the   last  exit  decomposition  (see  \cite[Lemma
2.1]{zhu2016critical}): for any $x\in B, y\not\in B$,
\begin{align}
  \label{exit1}
  G_K(x,y)
  &=\sum_{z\not\in B}H^B_K(x,z)G_K(z,y)=\sum_{z\in
    B}G_K(x,z)H_K^{B^c}(z,y),\\
  \label{exit2}
  G_K(y,x)
  &=\sum_{z\in B}H^{B^c}_K(y, z)G_K(z, x)=\sum_{z\not\in B}G_K(y, z)H_K^B(z, x).
\end{align}

\subsection{Hitting probabilities and branching capacity}
We provide a comparison result between the  hitting probabilities of a finite
set $K$ by the branching random walks and the branching capacity.

The   following    result   is   given   in    \cite[Theorem   1.3   and
Eq.~(8.6)]{zhu2016critical},
establishing 

\begin{proposition}\label{prop:4.1}
  Let $d\ge 5$, $\lambda>1$. Under \eqref{hyp-tree} and \eqref{hyp-brw},
  uniformly in  $r\ge 1$,  $K \subset \z^d\cap  \ball(0,r) $  nonempty ,
  $x\in \z^d$ with $|x|\ge \lambda r$, we have
  \begin{align}
    \label{eq:killing_upperbound1}
\phit c(x)\asymp \phit \adj(x)  & \asymp |x|^{2-d} {\Bcap(K)},\\
    \label{eq:killing_upperbound3}
  \phit -(x) \asymp \phit I(x) &\asymp |x|^{4-d}{\Bcap(K)}.
\end{align}
\end{proposition}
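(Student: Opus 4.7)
By Lemma~\ref{l:compar-palpha}, the comparison $\phit c \asymp \phit\adj$ from~\eqref{compar-pKr_K} together with $\phit I \asymp \phit -$ from~\eqref{comp:pK-pKI} reduces the proof to establishing
\[
\phit c(x) \asymp |x|^{2-d}\,\Bcap(K) \qquad\text{and}\qquad \phit I(x) \asymp |x|^{4-d}\,\Bcap(K),
\]
both uniformly over $r\geq 1$, $K\subset \ball(0,r)$ nonempty and $|x|\geq \lambda r$.

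\emph{First asymptotic.} The identity~\eqref{def:pKx} gives $\phit c(x) = \sum_{a\in K} G_K(x,a) = \pp_x(T^\kappa_K<\infty)$, where $T^\kappa_K$ denotes the first hit of $K$ by the killed walk $S^\kappa$: indeed, since $\phit\adj\equiv 1$ on $K$, the walk $S^\kappa$ dies upon first reaching $K$, so $G_K(a,a)=1$ and the visits to $K$ are disjoint. The plan is then the classical one for a transient Markov chain: for $|x|\to\infty$, the hitting probability of the bounded set $K$ by $S^\kappa$ is equivalent to $g(x)$ times the $S^\kappa$-capacity of $K$, and the identity~\eqref{Bcap-eK} should identify this capacity as $\Bcap(K)$, with $\mathbf{e}_K$ playing the role of the equilibrium measure. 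Concretely, I would fix $B = \ball(0,2r)\supset K$ and apply the last-exit decomposition~\eqref{exit2} to write
\[
G_K(x,a) = \sum_{z\in B} H^{B^c}_K(x,z)\, G_K(z,a).
\]
Combining Uchiyama's Green-function asymptotic~\eqref{c_g} with overshoot estimates for the symmetric $\theta$-walk, the harmonic measure $H^{B^c}_K(x,z)$ factors as $g(x)\asymp|x|^{2-d}$ times a bounded entrance distribution on $B$, uniformly in the allowed regime. Summing in $a$ and invoking $\Bcap(K) = \sum_a \mathbf{e}_K(a)$ then yields the announced order.

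\emph{Second asymptotic.} From~\eqref{eq:q=gr1}, $\phit I(x) = \sum_y G_K(x,y)\,\phit\adj(y)$. Splitting the sum at $|y|=\lambda r$ and substituting the first asymptotic $\phit\adj(y) \asymp |y|^{2-d}\Bcap(K)$ for $|y|\geq\lambda r$, together with the pointwise bound $G_K\leq g$, yields the upper estimate
\[
\phit I(x) \lesssim \Bcap(K)\sum_{|y|\geq \lambda r}|x-y|^{2-d}|y|^{2-d} + |x|^{2-d}\sum_{|y|\leq \lambda r}\phit\adj(y).
\]
The first sum is a discrete Riesz composition, comparable to $|x|^{4-d}$ for $d\geq 5$; the second is controlled by $r^2\Bcap(K)$ via the first asymptotic combined with~\eqref{upp-bcapK} and Green-function estimates, and the prefactor $|x|^{2-d}$ together with $|x|\geq\lambda r$ converts this into $|x|^{4-d}\Bcap(K)$. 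The matching lower bound is obtained by restricting~\eqref{eq:q=gr1} to a shell $|y|\asymp |x|$ where $G_K(x,y)\gtrsim g(x,y)$, which holds because the killed walk survives its trip from $x$ to that shell with positive probability (the integrated killing rate remains bounded along a typical path, since both endpoints are far from $K$).

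\emph{Main difficulty.} The hardest step is the lower bound in the first asymptotic. The naive inequality $G_K\leq g$ produces only $\phit c(x)\lesssim |K|\cdot |x|^{2-d}$, which can exceed $\Bcap(K)\cdot|x|^{2-d}$ by a factor as large as $|K|/\Bcap(K)\lesssim r^4$ (cf.~\eqref{upp-bcapK} with $|K|\asymp r^d$ in the extreme case). Extracting the sharp constant $\Bcap(K)$ thus requires genuinely identifying $\mathbf{e}_K$ as the equilibrium measure of $K$ for the killed walk $S^\kappa$, via a careful iteration of the last-exit decomposition for $S^\kappa$ that channels the killing back into the escape probability; this variational step, already present in Zhu's original argument, is the technical core of the proof.
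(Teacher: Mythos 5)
Your overall architecture matches the paper's: the comparisons $\phit c\asymp\phit \adj$ and $\phit -\asymp\phit I$ come from Lemma~\ref{l:compar-palpha}, and for the first asymptotic $\phit c(x)\asymp|x|^{2-d}\Bcap(K)$ the paper does exactly what you do, namely cite Zhu, so deferring the equilibrium-measure identification there is acceptable. Your lower bound for $\phit I$ (restricting \eqref{eq:q=gr1} to a shell $|y|\asymp|x|$ where $G_K(x,y)\asymp g(x,y)$) is also the paper's argument (Lemma~\ref{lem:q=gp}).

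The gap is in your upper bound for $\phit I(x)$, which is precisely the one piece the paper identifies as remaining to be proved. Your claim that $\sum_{|y|\le\lambda r}\phit\adj(y)\lesssim r^2\,\Bcap(K)$ is false: take $K=\ball(0,r)\cap\z^d$, so that $\phit\adj\equiv1$ on $K$ and the left-hand side is at least $\#K\asymp r^d$, while $r^2\,\Bcap(K)\lesssim r^{d-2}$ by \eqref{upp-bcapK}. Your near-field term is therefore only $\lesssim|x|^{2-d}\,\#K$, which exceeds the target $|x|^{4-d}\Bcap(K)\asymp|x|^{4-d}r^{d-4}$ throughout the range $\lambda r\le|x|\le r^2$. (A secondary problem: $g(x,y)\lesssim|x|^{2-d}$ also fails for $|y|\le\lambda r$ when $y$ is close to $x$.) The information you discard by replacing $G_K(x,y)$ with $g(x,y)$ --- that the killed walk rarely reaches points deep inside $K$ alive --- is exactly what makes the near-field contribution small. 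The paper keeps this weight: with $B=\ball(0,\lambda|x|)$ it proves
\[
\sum_{y\in B}G_K(x,y)\,\phit c(y)\ \lesssim\ (1+\diam(B))\,\sqrt{\phit I(x)\,\phit c(x)},
\]
by writing the left side as $\sum_n\ee_x[L^\kappa_n(B)\ind_{\{S^\kappa_n\in K\}}]$, applying Cauchy--Schwarz, and controlling the second moment $\sum_n\ee_x[L^\kappa_n(B)^2\ind_{\{S^\kappa_n\in K\}}]$ through \eqref{qKx-1} and Zhu's Lemma~\ref{lem:ZhuII4.1} (see Lemma~\ref{lem:pg_to_gamma4} and Corollary~\ref{lem:gp_small}). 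Feeding this into \eqref{qKx-1} yields the self-improving inequality $\phit I(x)\lesssim|x|\sqrt{\phit I(x)\,\phit c(x)}+|x|^{4-d}\Bcap(K)$, which is then solved for $\phit I(x)$ using $\phit c(x)\lesssim|x|^{2-d}\Bcap(K)$. This second-moment step is the technical core of the proposition and is absent from your proposal.
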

Equation~\eqref{eq:killing_upperbound1} is   given   in   Zhu \cite[Theorem   1.3   and
Eq.~(8.6)]{zhu2016critical} and
Equation~\eqref{eq:killing_upperbound3} is also given in
Zhu \cite{zhu-2} when the displacements given by  $\theta$ are bounded. 
Notice that  the   first  part  of~\eqref{eq:killing_upperbound1}
and~\eqref{eq:killing_upperbound3} are   given
by~\eqref{compar-pKr_K} and~\eqref{comp:pK-pKI}
for  $x \not\in K$.

So    we   are    left   to    the    proof   of    the   second    part
of~\eqref{eq:killing_upperbound3}.      Under    the     general    case
\eqref{hyp-brw},     one      direction     of     the      proof     of
\eqref{eq:killing_upperbound3} is easy.

\begin{lemma}\label{lem:q=gp}
Let $d\ge 5$. Under \eqref{hyp-tree} and \eqref{hyp-brw}, 
uniformly in   $K\subset \z^d$ finite and nonempty,  and  $x \in \z^d$,  we have
\begin{equation}    
 \phit I(x)\asymp \sum_{y\in\z^d}G_K(x,y) \phit c(y). \label{qKx-1}
\end{equation}
Moreover, for any $\lambda>1$,  uniformly in $r\ge 1$,
$K\subset\ball(0,r)$ nonempty  and  $|x|\ge \lambda r$, we have
\begin{equation}
   \label{qKx:lower}
    \phit I(x)
   \gtrsim |x|^{4-d}\Bcap(K).
\end{equation}
\end{lemma}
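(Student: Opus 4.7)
The equivalence~\eqref{qKx-1} is immediate: by~\eqref{eq:q=gr1} we have
$\phit I(x)=\sum_y G_K(x,y)\,\phit\adj(y)$, while
$\phit\adj(y)\asymp \phit c(y)$ for every $y\in\z^d$ (trivially on $K$, where both
equal $1$, and by~\eqref{compar-pKr_K} off $K$), which makes the two sums
in~\eqref{qKx-1} comparable.

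For~\eqref{qKx:lower}, using~\eqref{qKx-1} it suffices to prove
$\sum_y G_K(x,y)\,\phit c(y) \gtrsim |x|^{4-d}\Bcap(K)$.
I would localize the sum to the ball $B:=\ball(x,\varepsilon|x|)$, picking
$\varepsilon \in (0,(\lambda-1)/(2\lambda))$ small in terms of $\lambda$:
this guarantees $|y|\ge \lambda'r$ for $\lambda'=(\lambda+1)/2>1$ and every $y\in B$.
Applying~\eqref{eq:killing_upperbound1} with parameter $\lambda'$ then yields
$\phit c(y)\asymp |y|^{2-d}\Bcap(K)\asymp |x|^{2-d}\Bcap(K)$
uniformly on $B$, and the task reduces to the Green function estimate
\[
\sum_{y\in B} G_K(x,y) \gtrsim \varepsilon^{2}\, |x|^{2}.
\]

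This estimate is obtained by showing that the killing barely
affects a walk confined to $B$. Let $\tau=\tau_S(B)$; standard random walk
estimates give $\ee_x[\tau]\asymp \varepsilon^{2}|x|^{2}$ and
$\ee_x[\tau^2]\asymp \varepsilon^{4}|x|^{4}$. Since
$\sup_{y\in B}\phit\adj(y)\lesssim |x|^{2-d}\Bcap(K)$
by~\eqref{eq:killing_upperbound1}, and
$|x|^{4-d}\Bcap(K)\lesssim \lambda^{4-d}$ by~\eqref{upp-bcapK} combined
with $|x|\ge \lambda r$, the cumulative killing obeys
\[
\ee_x\Big[\sum_{i<\tau}\phit\adj(S_i)\Big]
\le \ee_x[\tau]\,\sup_{y\in B}\phit\adj(y)
\lesssim \varepsilon^{2}\lambda^{4-d}.
\]
Taking $\varepsilon$ small enough (in terms of $\lambda$), Markov's inequality
makes the event $E:=\{\sum_{i<\tau}\phit\adj(S_i)\le 1/4\}$ satisfy
$\pp(E)\ge 3/4$, and on $E$ one has $\prod_{i<n}(1-\phit\adj(S_i))\ge 3/4$
for every $n\le\tau$. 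Restricting the expression~\eqref{def-GK} of
$G_K(x,y)$ to indices $n<\tau$ on the event $E$ then yields
$\sum_{y\in B}G_K(x,y)\ge \tfrac{3}{4}\,\ee_x[\tau\ind_E]$, and a
Cauchy--Schwarz bound on $\ee_x[\tau\ind_{E^c}]$ using
$\pp(E^c)\lesssim \varepsilon^{2}\lambda^{4-d}$ and the second moment of $\tau$
forces $\ee_x[\tau\ind_E]\gtrsim \ee_x[\tau]$. The main obstacle is
precisely this Green function lower bound, relying crucially on the
a priori capacity bound $\Bcap(K)\lesssim r^{d-4}$ from~\eqref{upp-bcapK}
to keep the cumulative killing bounded on a set of positive probability.
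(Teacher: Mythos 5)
Your treatment of \eqref{qKx-1} is exactly the paper's: it follows at once from \eqref{eq:q=gr1} together with $\phit\adj\asymp\phit c$ (equality to $1$ on $K$, and \eqref{compar-pKr_K} off $K$). For \eqref{qKx:lower}, however, you take a genuinely different route. The paper localizes the sum $\sum_y G_K(x,y)\phit\adj(y)$ to the distant annulus $\{2|x|\le|y|\le3|x|\}$ and invokes Zhu's pointwise comparison $G_K(x,y)\asymp|x-y|^{2-d}$ for $|x|,|y|\ge\lambda r$ (\cite[Lemma~12.3]{zhu2016critical}, quoted as \eqref{GK-largexy}), after which the computation is a one-line Riemann sum. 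You instead localize to the small ball $B=\ball(x,\varepsilon|x|)$ and prove only the averaged lower bound $\sum_{y\in B}G_K(x,y)\gtrsim\varepsilon^2|x|^2$, by controlling the accumulated killing $\sum_{i<\tau_S(B)}\phit\adj(S_i)$ via \eqref{eq:killing_upperbound1}, \eqref{upp-bcapK}, the first two moments of the exit time, Markov's inequality and Cauchy--Schwarz. Your argument is correct (the choice $\varepsilon<(\lambda-1)/(2\lambda)$ does keep $B$ inside $\{|y|\ge\lambda' r\}$ so that \eqref{eq:killing_upperbound1} applies uniformly on $B$, and the product bound $\prod_{i<n}(1-\phit\adj(S_i))\ge1-\sum_{i<n}\phit\adj(S_i)$ on the event $E$ is valid), and it has the merit of being self-contained: it avoids citing Zhu's Green function asymptotics and only uses the soft fact that the killing rate is $O(|x|^{2-d}\Bcap(K))$ near $x$. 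The price is a longer argument that needs second-moment exit-time estimates and a smallness parameter $\varepsilon=\varepsilon(d,\lambda)$; the paper's proof is shorter but leans on an external lemma that the rest of Section~4 relies on anyway (e.g.\ in the proof of Lemma~\ref{lem:ZhuII6.2}).
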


\begin{proof} First, \eqref{qKx-1} follows immediately from
  \eqref{eq:q=gr1} and~\eqref{compar-pKr_K}.
By \cite[Lemma 12.3]{zhu2016critical},   uniformly in
$K\subset\ball(0,r)$ and $|x|, |y|\ge \lambda r$, we get, as
$|x|_\theta \asymp |x|$, that
\begin{equation}
  G_K(x, y) \asymp |x-y|^{2-d}. \label{GK-largexy}
\end{equation}
Then by \eqref{eq:q=gr1} and \eqref{eq:killing_upperbound1}, we obtain that
when $|x|\ge \lambda r$,
\begin{align}
\phit I(x) \ge &\sum_{2|x|\le |y|\le 3|x|}G_K(x,y)\phit\adj(y) \nonumber\\
\asymp&\sum_{2|x|\le |y|\le 3|x|}|x-y|^{2-d}\,|y|^{2-d}\, \Bcap(K) \nonumber
\\ 
\asymp& \, |x|^{4-d}\Bcap(K), \label{pKI>}
\end{align} 
proving \eqref{qKx:lower}.
\end{proof}

The other  direction of the proof of \eqref{eq:killing_upperbound3}  will be presented at the end  of this section after  some preliminary
results.  We  first cite a result  from \cite{zhu-2}, and note  that the
proof provided there, under the condition that the displacements given
by $\theta$ are bounded,  can be adapted to our case word by
word. 
 
\begin{lemma}{\cite[Eq.~(4.1)]{zhu-2}}\label{lem:ZhuII4.1}
Let $d\ge 5$. Under \eqref{hyp-tree} and \eqref{hyp-brw}, uniformly in $K\subset \z^d$ finite and nonempty,  $x\in \z^d$ and $B\subset \z^d$,
\[
\sum_{y\in B}G_K(x,y)\phit I(y)\lesssim (\diam(B)+1)^2\,  \phit I(x).
\]
\end{lemma}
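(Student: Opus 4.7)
My plan is to first reduce to the case $x\in B$ via a first-entrance decomposition, then exploit the representation $\phit I(y)=\sum_z G_K(y,z)\phit\adj(z)$ from~\eqref{eq:q=gr1} to transform the statement into a pointwise Green-function inequality, which is finally established by case analysis using the Riesz composition formula.

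For $x\notin B$, let $\tau=\inf\{n\ge 0: S^\kappa_n\in B\}$ be the first entry of $S^\kappa$ into $B$. The strong Markov property at $\tau$ yields
\[
\sum_{y\in B} G_K(x,y)\phit I(y) = \sum_{z\in B} H^{B^c}_K(x,z)\sum_{y\in B}G_K(z,y)\phit I(y),
\]
while another application of the Markov property to $\phit I$ (conditioning on whether the walk enters $B$ before being killed) gives $\sum_{z\in B}H^{B^c}_K(x,z)\phit I(z)\le\phit I(x)$. Assuming the lemma for $z\in B$, the case $x\notin B$ follows.

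For $x\in B$, I substitute $\phit I(y)=\sum_z G_K(y,z)\phit\adj(z)$ and swap sums, reducing the lemma to the key pointwise estimate
\[
\sum_{y\in B} G_K(x,y)\,G_K(y,z) \;\lesssim\; (\diam(B)+1)^2\, G_K(x,z),
\]
uniformly in $x\in B$ and $z\in\z^d$ (summing over $z$ weighted by $\phit\adj(z)$ and reapplying~\eqref{eq:q=gr1} then yields the lemma). To establish this, I use $G_K\le G$ and the asymptotic $g(w)\asymp(1\vee|w|)^{2-d}$ from~\eqref{c_g}, writing $r=\diam(B)+1$. In the far regime $|x-z|\ge 2r$, uniformly in $y\in B$ we have $|y-z|\asymp|x-z|$, hence $G_K(y,z)\asymp G_K(x,z)$ by~\eqref{GK-largexy}; meanwhile $\sum_{y\in B}G_K(x,y)\le\sum_{y\in B}g(y-x)\lesssim r^2$ for $x\in B$. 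In the near regime $|x-z|<2r$, the Riesz composition formula $\int_{\r^d}|y-x|^{2-d}|y-z|^{2-d}\rd y = C_d|x-z|^{4-d}$ (valid for $d\ge 5$) together with $|x-z|^{4-d}\le(2r)^2|x-z|^{2-d}$ yields the required bound modulo a matching lower bound $G_K(x,z)\gtrsim(1\vee|x-z|)^{2-d}$.

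The main obstacle is the near regime $|x-z|<2r$ when $K$ lies between $x$ and $z$, where the lower bound on $G_K(x,z)$ used above may fail because the killing rate $\phit\adj$ along typical paths can be large. To handle this, instead of bounding pointwise in $z$, one controls the full sum $\sum_z \phit\adj(z)(\cdots)$ directly, using the estimates $\phit I(y)\asymp|y|^{4-d}\Bcap(K)$ and $\phit\adj(y)\asymp|y|^{2-d}\Bcap(K)$ from Proposition~\ref{prop:4.1} away from $K$, together with $\Bcap(K)\lesssim r^{d-4}$ from~\eqref{upp-bcapK} to treat the near-$K$ contribution. This is precisely the argument given in~\cite{zhu-2} for bounded displacements, and since the Green-function asymptotics~\eqref{c_g}, \eqref{GK-largexy} and Proposition~\ref{prop:4.1} hold under~\eqref{hyp-brw}, the proof adapts verbatim.
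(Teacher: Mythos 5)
First, a point of comparison: the paper does not actually prove this lemma. It cites \cite[Eq.~(4.1)]{zhu-2} and asserts that Zhu's proof, written for bounded displacements, ``can be adapted to our case word by word.'' So there is no in-paper argument to match yours against; what matters is whether your added material constitutes a valid standalone proof. Your two reductions are correct and match the natural route: the first-entrance decomposition \eqref{exit2} together with $\sum_{z\in B}H^{B^c}_K(x,z)\phit I(z)\le \phit I(x)$ legitimately reduces to $x\in B$, and substituting \eqref{eq:q=gr1} and swapping sums correctly reduces the lemma to the pointwise inequality $\sum_{y\in B}G_K(x,y)G_K(y,z)\lesssim(\diam(B)+1)^2 G_K(x,z)$.

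However, you do not establish that pointwise inequality, and the two ways you try to close it both fail. (i) In the far regime you invoke \eqref{GK-largexy} to get $G_K(y,z)\asymp G_K(x,z)$, but \eqref{GK-largexy} requires \emph{both} arguments to be at distance $\gtrsim r_K$ from $K$ (it is a statement about $|x|,|y|\ge\lambda r_K$ with $K\subset\ball(0,r_K)$, not about $|y-z|\asymp|x-z|$). Here $z$ ranges over all of $\z^d$ weighted by $\phit\adj(z)$, which is largest on and near $K$, precisely where \eqref{GK-largexy} says nothing; moreover $B$ is arbitrary and may intersect $K$, in which case the killing inside $B$ destroys any naive comparison of $G_K(y,z)$ with $G_K(x,z)$ for $x,y\in B$. (ii) Your fallback for the near regime invokes the estimate $\phit I(y)\asymp|y|^{4-d}\Bcap(K)$ from Proposition~\ref{prop:4.1}. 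In this paper that upper bound (\eqref{qKx:upp}, hence the second half of \eqref{eq:killing_upperbound3}) is proved via Lemma~\ref{lem:ZhuII6.2}, which rests on Corollary~\ref{lem:gp_small}, which rests on the very lemma you are proving. Using it here is circular; it is available independently only in \cite{zhu-2} under the bounded-jump hypothesis that the present paper is precisely trying to remove. Since your final sentence then defers to ``the proof adapts verbatim,'' the net content of the proposal beyond the paper's own citation is the reduction to the Green-function convolution bound, which remains unproved; the genuinely hard step --- a uniform bound such as $\ee_x[L^\kappa_\infty(B)\,;\,T<\infty]\lesssim(\diam(B)+1)^2\,\pp_x(T<\infty)$ for the killed walk, valid even when $K$ meets $B$ and with only the tail condition in \eqref{hyp-brw} --- is still missing.
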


We set
\[
  L^\kappa_n(B):= \sum_{i=0}^n \mathbf 1_{\{S^\kappa_i\in
    B\}}.
  \label{def-L} 
\]
By the Markov property, we get  that for any $B\subset \z^d$,
\begin{eqnarray}   
  \sum_{y\in B}G_K(x,y) \phit c(y)
  &=& 
\sum_{y\in B} \sum_{i=0}^\infty \pp_x(S^\kappa_i=y) \sum_{j=0}^\infty
      \pp_y(S^\kappa_j\in K) \nonumber 
\\
&=& \sum_{n=0}^\infty   \ee_x\left[L^\kappa_n(B) \, {\bf 1}_{\{
    S^\kappa_n\in K\}}\right].
    \label{eq:pg_to_gamma2}
\end{eqnarray}
In particular, for  $B=\z^d$ in \eqref{eq:pg_to_gamma2},  we obtain
\begin{equation}  \sum_{y\in \z^d }G_K(x,y) \phit c (y)
=\sum_{n=0}^\infty (n+1) \pp_x\Big(S^\kappa_n\in K\Big).  \label{eq:pg_to_gamma3} \end{equation}

\begin{lemma}
  \label{lem:pg_to_gamma4}
Let $d\ge 5$. Under \eqref{hyp-tree} and \eqref{hyp-brw}, uniformly in $K\subset \z^d$ finite and nonempty,  $x\in \z^d$ and $B\subset \z^d$,
\begin{equation}
  \sum_{y\in B} G_K(x,y) \phit I(y) 
  \gtrsim \sum_{n=0}^\infty
  \ee_x\Big[ L^\kappa_n(B)^2 \mathbf 1_{\{ S^\kappa_n\in K\}}\Big].
  \label{eq:pg_to_gamma4}
\end{equation}
  \end{lemma}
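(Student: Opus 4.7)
The strategy is to unfold $\phit I(y)$ by chaining two Green-function identities so that $\sum_{y\in B}G_K(x,y)\phit I(y)$ becomes a triple sum over trajectories of the killed walk $S^\kappa$, and then to recognize the combinatorial multiplicity produced by this triple decomposition as at least a constant times the squared local time $L^\kappa_n(B)^2$ on the event $\{S^\kappa_n\in K\}$.

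First, I would start from \eqref{eq:q=gr1} to write $\phit I(y)=\sum_{z\in\z^d} G_K(y,z)\phit\adj(z)$, use the lower bound $\phit\adj(z)\ge \mu(0)\phit c(z)$ from \eqref{compar-pKr_K} (which holds on $K^c$ and is trivial on $K$ since both sides equal $1$), and then substitute $\phit c(z)=\sum_{a\in K}G_K(z,a)$ from \eqref{def:pKx}. This yields
\[
\sum_{y\in B}G_K(x,y)\,\phit I(y)\;\gtrsim\; \sum_{a\in K}\ \sum_{y\in B,\,z\in\z^d} G_K(x,y)\,G_K(y,z)\,G_K(z,a).
\]

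Second, I would expand each Green function via \eqref{def-GK} as $G_K(u,v)=\sum_n \pp_u(S^\kappa_n=v)$ and apply the Markov property of $S^\kappa$ to rewrite the triple sum as
\[
\sum_{a\in K}\sum_{i,j,k\ge 0}\pp_x\bigl(S^\kappa_i\in B,\ S^\kappa_{i+j+k}=a\bigr)\;=\;\ee_x\Bigl[\sum_{n\ge 0}\sum_{i=0}^n (n-i+1)\,\mathbf 1_{\{S^\kappa_i\in B,\ S^\kappa_n\in K\}}\Bigr],
\]
where I set $n=i+j+k$ and used that the number of nonnegative pairs $(j,k)$ with $j+k=n-i$ is exactly $n-i+1$. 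Rewriting the factor $n-i+1$ as $\#\{j:\ i\le j\le n\}$, this takes the shape
\[
\ee_x\Bigl[\sum_{n\ge 0}\mathbf 1_{\{S^\kappa_n\in K\}}\sum_{0\le i\le j\le n}\mathbf 1_{\{S^\kappa_i\in B\}}\Bigr].
\]

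Finally, I would conclude with the elementary bound
\[
L^\kappa_n(B)^2=\sum_{i,j=0}^n \mathbf 1_{\{S^\kappa_i\in B\}}\mathbf 1_{\{S^\kappa_j\in B\}}\le 2\!\!\sum_{0\le i\le j\le n}\!\!\mathbf 1_{\{S^\kappa_i\in B\}}\mathbf 1_{\{S^\kappa_j\in B\}}\le 2\!\!\sum_{0\le i\le j\le n}\!\!\mathbf 1_{\{S^\kappa_i\in B\}},
\]
which is exactly what is needed to match the previous display. The only step that calls for genuine care is the combinatorial bookkeeping in the second paragraph (the factor $n-i+1$ and its reinterpretation as a sum over $j\in[i,n]$); everything else is a straightforward chaining of identities already recorded in Section~\ref{sec:tadj}, so no real obstacle is expected.
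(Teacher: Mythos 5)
Your proof is correct and follows essentially the same route as the paper's: unfold $\phit I$ via \eqref{eq:q=gr1} and \eqref{compar-pKr_K} into $\sum_z G_K(\cdot,z)\phit c(z)$, expand $\phit c$ by \eqref{def:pKx}, use the Markov property of $S^\kappa$ to get the weight $n-i+1$, and bound $\sum_{i=0}^n(n-i+1)\mathbf 1_{\{S^\kappa_i\in B\}}\ge\tfrac12 L^\kappa_n(B)^2$ by the same symmetrization. The only cosmetic differences are that the paper cites \eqref{qKx-1} directly instead of spelling out the one-sided bound with the constant $\mu(0)$, and writes $\phit c(z)=\sum_\ell\pp_z(S^\kappa_\ell\in K)$ rather than $\sum_{a\in K}G_K(z,a)$.
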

  
  \begin{proof}   
Using successively \eqref{qKx-1}, \eqref{def:pKx} and the Markov
property, we get that
\begin{eqnarray}   
 \sum_{y\in B} G_K(x,y) \phit I(y)  
 &\asymp& 
\sum_{y\in B}G_K(x,y) \sum_{z\in\z^d} G_K(y,z) \phit c(z)\nonumber
\\
&=&
\sum_{y\in B}G_K(x,y) \sum_{z\in\z^d} G_K(y,z) \sum_{\ell=0}^\infty \pp_z(S^\kappa_\ell\in K) \nonumber
\\
&=&  \sum_{y\in B} \sum_{i, j, \ell=0}^\infty \pp_x\Big(S^\kappa_i=y,  S^\kappa_{i+j+\ell}\in K\Big) \nonumber
\\
&=&
  \sum_{n=0}^\infty \sum_{i=0}^n (n-i+1) \pp_x\Big(S^\kappa_i \in B,  S^\kappa_n\in K\Big).  \label{eq:pg_to_gamma3-4} 
 \end{eqnarray}
 Note that
\[
  \sum_{i=0}^n (n-i+1) {\bf 1}_{\{S^\kappa_i \in B\}}
  = \sum_{j=0}^n\sum_{i=0}^j  {\bf 1}_{\{S^\kappa_i \in B\}}
\geq  \frac{1}{2}  \sum_{j=0}^n\sum_{i=0}^n
    {\bf 1}_{\{S^\kappa_j \in B\}} {\bf 1}_{\{S^\kappa_i \in B\}}
=  \frac{1}{2} L^\kappa_n(B)^2. 
\]
 
\noindent Then \eqref{eq:pg_to_gamma4} follows from \eqref{eq:pg_to_gamma3-4}. 
\end{proof}

 \begin{corollary}\label{lem:gp_small} 
 Let $d\geq 5$. Under \eqref{hyp-tree} and \eqref{hyp-brw}, uniformly in $K\subset \z^d$ finite and nonempty,  $x\in \z^d$ and $B\subset \z^d$,
 $$\sum_{y\in B}G_K(x,y) \phit c(y)
 \lesssim
 (1+\diam(B))  \, \sqrt{ \phit I(x)\, \phit c(x)}.
 $$ 
 \end{corollary}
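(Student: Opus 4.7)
The plan is to apply Cauchy--Schwarz to the identity \eqref{eq:pg_to_gamma2}, which rewrites the left-hand side as a sum over $n$ of $\ee_x\bracks{L^\kappa_n(B)\,\mathbf 1_{\{S^\kappa_n\in K\}}}$. The idea is to split the factor $L^\kappa_n(B)\,\mathbf 1_{\{S^\kappa_n\in K\}}$ as $\bigl(L^\kappa_n(B)\,\mathbf 1_{\{S^\kappa_n\in K\}}\bigr)^{1/2}\cdot \mathbf 1_{\{S^\kappa_n\in K\}}^{1/2}$, so that after applying Cauchy--Schwarz (both inside the expectation, to produce $L^\kappa_n(B)^2$, and also for the sum over $n$) we can identify the two resulting factors with quantities already controlled.

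Concretely, I would proceed as follows. First, by Cauchy--Schwarz applied to the sum in $n$,
\[
\sum_{n=0}^\infty \ee_x\bracks[\big]{L^\kappa_n(B)\,\mathbf 1_{\{S^\kappa_n\in K\}}}
\le \pars*{\sum_{n=0}^\infty \ee_x\bracks[\big]{L^\kappa_n(B)^2\,\mathbf 1_{\{S^\kappa_n\in K\}}}}^{1/2}
\pars*{\sum_{n=0}^\infty \pp_x\pars*{S^\kappa_n\in K}}^{1/2}.
\]
The second factor is exactly $\sqrt{\phit c(x)}$ by~\eqref{def:pKx}. For the first factor, Lemma~\ref{lem:pg_to_gamma4} gives
\[
\sum_{n=0}^\infty \ee_x\bracks[\big]{L^\kappa_n(B)^2\,\mathbf 1_{\{S^\kappa_n\in K\}}} \lesssim \sum_{y\in B} G_K(x,y)\,\phit I(y),
\]
and Lemma~\ref{lem:ZhuII4.1} bounds the right-hand side by a multiple of $(1+\diam(B))^2\,\phit I(x)$. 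Taking square roots and combining with~\eqref{eq:pg_to_gamma2} yields the desired estimate.

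The main obstacle is really just making sure the chain of inequalities lines up cleanly: one has to recognize that the square of the counting functional $L^\kappa_n(B)$ appearing in Lemma~\ref{lem:pg_to_gamma4} matches exactly what Cauchy--Schwarz produces from \eqref{eq:pg_to_gamma2}, and that Lemma~\ref{lem:ZhuII4.1} is what converts the resulting sum $\sum_{y\in B}G_K(x,y)\phit I(y)$ into a quantity depending only on $\phit I(x)$ and $\diam(B)$. No new probabilistic input is needed; the proof is essentially a Cauchy--Schwarz interpolation between the first and second moment identities that have already been established in \eqref{eq:pg_to_gamma2} and Lemma~\ref{lem:pg_to_gamma4}.
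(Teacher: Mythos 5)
Your proof is correct and takes essentially the same route as the paper's: both rewrite the left-hand side via \eqref{eq:pg_to_gamma2}, apply Cauchy--Schwarz with respect to the measure $\sum_{n}\ee_x[\,\cdot\,]$, identify the second factor as $\sqrt{\phit c(x)}$ by \eqref{def:pKx}, and control the first factor by Lemmas~\ref{lem:pg_to_gamma4} and~\ref{lem:ZhuII4.1}. (Only your informal description of the splitting is slightly off --- it should be $L^\kappa_n(B)\,\mathbf 1_{\{S^\kappa_n\in K\}}=\bigl(L^\kappa_n(B)\,\mathbf 1_{\{S^\kappa_n\in K\}}^{1/2}\bigr)\cdot\mathbf 1_{\{S^\kappa_n\in K\}}^{1/2}$ --- but the displayed inequality you actually use is the correct one.)
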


\begin{proof}  By \eqref{eq:pg_to_gamma2} and the Cauchy-Schwarz inequality, \begin{eqnarray*}  
\sum_{y\in B}G_K(x,y) \phit c(y)
&=& \sum_{n=0}^\infty   \ee_x\Big[L^\kappa_n(B) \, {\bf 1}_{\{ S^\kappa_n\in K\}}\Big]
\\
&\le&
\sqrt{\sum_{n=0}^\infty   \ee_x\Big[L^\kappa_n(B)^2 \, {\bf 1}_{\{ S^\kappa_n\in K\}}\Big] } \sqrt{\sum_{n=0}^\infty   \P_x( S^\kappa_n\in K)}.
  \end{eqnarray*}  
  We    conclude    by    \eqref{def:pKx} and
  Lemmas~\ref{lem:ZhuII4.1} 
  and~\ref{lem:pg_to_gamma4}.
\end{proof}

\begin{lemma}\label{lem:ZhuII6.2}
Let $d\ge 5$, $\lambda>1$. Under \eqref{hyp-tree} and \eqref{hyp-brw},
uniformly in  $r\ge 1$, $K\subset\ball(0,r)$  and $|x|\ge \lambda r$, we
have
\begin{equation}   
\phit I(x)\lesssim |x|^{4-d}\Bcap(K). \label{qKx:upp}
 \end{equation}
\end{lemma}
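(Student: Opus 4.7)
The plan is to leverage the equivalence $\phit I(x)\asymp \sum_{y\in\z^d} G_K(x,y)\phit c(y)$ from~\eqref{qKx-1}, and to bound its right-hand side by splitting the sum according to the set $B:=\z^d\cap\ball(0,\lambda r)$, which satisfies $\diam(B)\asymp r$ and contains $K$. Thus it suffices to control the two pieces $\sum_{y\in B} G_K(x,y)\phit c(y)$ and $\sum_{y\notin B} G_K(x,y)\phit c(y)$ separately.

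For the far range $y\notin B$, both $|x|$ and $|y|$ exceed $\lambda r$, so~\eqref{GK-largexy} gives $G_K(x,y)\asymp |x-y|^{2-d}$ and~\eqref{eq:killing_upperbound1} gives $\phit c(y)\lesssim |y|^{2-d}\Bcap(K)$. Comparing the lattice sum to its integral counterpart and using the Riesz convolution identity $|\cdot|^{2-d}*|\cdot|^{2-d}(x)\asymp |x|^{4-d}$ on $\r^d$ (which is finite since $d\ge 5>4$), I would obtain
\[
\sum_{y\notin B} G_K(x,y)\phit c(y)\lesssim |x|^{4-d}\Bcap(K).
\]
For the near range $y\in B$, Corollary~\ref{lem:gp_small} directly yields
\[
\sum_{y\in B} G_K(x,y)\phit c(y)\lesssim (1+\diam(B))\sqrt{\phit I(x)\,\phit c(x)}\lesssim r\,|x|^{(2-d)/2}\sqrt{\Bcap(K)\,\phit I(x)},
\]
where I used~\eqref{eq:killing_upperbound1} once more to bound $\phit c(x)$. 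Plugging both estimates into~\eqref{qKx-1} and using $r\le |x|/\lambda$ to rewrite $r\,|x|^{(2-d)/2}\lesssim |x|^{(4-d)/2}$, the resulting inequality reads, with $A:=|x|^{4-d}\Bcap(K)$ and $v:=\sqrt{\phit I(x)}$,
\[
v^2\le C_1 A+C_2\sqrt{A}\,v,
\]
a quadratic inequality in $v$ whose solution forces $v^2\lesssim A$, hence $\phit I(x)\lesssim |x|^{4-d}\Bcap(K)$.

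The main obstacle is the near-field sum: there $\phit c(y)$ may be of order one for $y$ close to $K$, so the crude pointwise asymptotics used in the far regime are unavailable. Corollary~\ref{lem:gp_small}, itself a Cauchy--Schwarz estimate on the local times of the killed walk $S^\kappa$, circumvents this by trading a factor $r=\diam(B)$ against a factor $\sqrt{\phit I(x)}$, which is exactly what the self-bootstrapping quadratic inequality absorbs. One also has to verify the harmless points that the lattice-to-integral comparison in the far regime is valid (the integrand is locally integrable near $y=x$ since $2-d>-d$, and the single term $y=x$ contributes at most $G_K(x,x)\,\phit c(x)\lesssim |x|^{2-d}\Bcap(K)$, which is absorbed in the final bound).
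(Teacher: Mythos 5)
Your proof is correct and follows essentially the same route as the paper's: split $\sum_y G_K(x,y)\,\phit c(y)$ into a near and a far part, control the near part via Corollary~\ref{lem:gp_small}, and close with the quadratic bootstrap inequality in $\sqrt{\phit I(x)}$. The only cosmetic difference is that the paper cuts at radius $\lambda|x|$ (so the far range is a simple tail sum and the near factor is $\diam(B)\asymp|x|$), whereas you cut at $\lambda r$ and therefore need the full Riesz convolution estimate for the far range; both choices yield the same final bound.
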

\begin{proof}
  Let $B:=\ball(0,\lambda|x|)$. By  \eqref{eq:killing_upperbound1} and
  the fact that $G_K(x,y) \le g(x, y)$, we get
\begin{eqnarray}
\sum_{y\not\in B}G_K(x,y) \phit c(y) 
&\lesssim& \sum_{y\not\in B}|y|^{2-d}|y|^{2-d}\Bcap(K)  \nonumber
\\
&\asymp & |x|^{4-d}\Bcap(K). \label{eq:C1}
\end{eqnarray}
Then by \eqref{qKx-1} and  Corollay \ref{lem:gp_small}, we get that 
\begin{eqnarray*}
  \phit I(x)
  &\lesssim&  \sum_{y \in B}G_K(x,y) \phit c(y) + |x|^{4-d}\Bcap(K) 
\\
  &\lesssim&
             (1+\diam(B))  \, \sqrt{\phit I(x)\, \phit c(x)} +
           \sum_{y\not\in B}G_K(x,y) \phit c(y)  
\\
  &\lesssim&
             |x|\,\sqrt{ \phit I(x)\, \phit c(x)} + |x|^{4-d}\Bcap(K) .
\end{eqnarray*}
By \eqref{eq:killing_upperbound1}, $\phit c(x)\lesssim |x|^{2-d}\Bcap(K)$. 
The conclusion follows easily.
\end{proof}

\begin{proof}[Proof of Proposition~\ref{prop:4.1}.] Recall we only need
  to prove   the   second    part
of~\eqref{eq:killing_upperbound3}. 
Now, this  is a direct consequence of~\eqref{qKx:lower} and 
  Lemma~\ref{lem:ZhuII6.2}. 
  \end{proof}

\subsection{Proof of Theorem \ref{theo:compareBcap}}
In this section, we prove Theorem \ref{theo:compareBcap} following the the same ideas as \cite[Proposition 2.4, Lemma 2.5]{zhu-2} and \cite[Lemma 6.1, Lemma 7.1]{zhu2016critical}, while keeping track of the error terms at each step to obtain the exact asymptotic.  

We give below  an expression of the branching  capacity $\Bcap(K)$ using
$\phit -$  and the harmonic measure  $H_K^B$. In the proof,  we shall us
that  the  first  exit  time  $ \tau_S(B)$,  of  $B$  for  $(S_n)$,  see
by~\eqref{eq:def-tau}, is a.s. finite.

\begin{lemma}     \label{l:Bcap-eK}      Let     $d\geq      5$.     Let
  $K\subset B \subset \z^d$ be nonempty finite sets. We have
\begin{equation*}  
  \Bcap(K)= \sum_{a\in K} \sum_{b \not\in B} H^B_K(b, a) \,  {\bf e}_K (b).
\end{equation*}  
\end{lemma}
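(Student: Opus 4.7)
By~\eqref{Bcap-eK}, it suffices to establish the pointwise identity
\begin{equation}\label{eq:pointBcap}
  {\bf e}_K(a)=\sum_{b\notin B}H^B_K(b,a)\,{\bf e}_K(b)
  \qquad\text{for each }a\in K,
\end{equation}
and then to sum over $a\in K$. My plan to prove~\eqref{eq:pointBcap} combines a spine decomposition of $\tbrw-{}$ with a time-reversal argument that exploits the symmetry of $\theta$ from~\eqref{hyp-brw}.

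Let $(X_i)_{i\ge 0}$ be the $\theta$-walk with $X_0=a$. Since under $\pp_a$ the BRW $\tbrw-{}$ is distributed as $\tbrw I{}$ started at $S_1$ (see~\eqref{eq:pK-pKI}), its range is the union of independent adjoint ranges rooted at the spine positions $X_1,X_2,\ldots$, each of which misses $K$ with probability $1-\phit\adj(X_i)$. This gives the two identities
\[
{\bf e}_K(a)=\ee_a\Big[\prod_{i\ge 1}(1-\phit\adj(X_i))\Big],
\qquad
\ee_y\Big[\prod_{i\ge 0}(1-\phit\adj(X_i))\Big]=(1-\phit\adj(y))\,{\bf e}_K(y).
\]
Set $\sigma:=\inf\{i\ge 1:X_i\notin B\}$, which is $\pp_a$-a.s.\ finite because $B$ is finite and the walk transient. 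Splitting the product at $\sigma$ via the strong Markov property and applying the second identity at $y=X_\sigma$ yields
\[
{\bf e}_K(a)=\sum_{n\ge 1}\sum_{b\notin B}\ee_a\Big[\mathbf 1_{\{X_n=b,\,X_i\in B,\,1\le i\le n-1\}}\prod_{i=1}^{n-1}(1-\phit\adj(X_i))\Big](1-\phit\adj(b))\,{\bf e}_K(b).
\]

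Next I apply the reversal $Y_k:=X_{n-k}$ to each admissible path $a=X_0,\ldots,X_n=b$. Symmetry of $\theta$ preserves the one-step probabilities, and the constraint $X_i\in B$ for $1\le i\le n-1$ becomes $Y_k\in B$ for the same range of $k$. The killing weight transforms as
\[
\prod_{i=1}^{n-1}(1-\phit\adj(X_i))=\prod_{k=1}^{n-1}(1-\phit\adj(Y_k))=\frac{1}{1-\phit\adj(b)}\prod_{k=0}^{n-1}(1-\phit\adj(Y_k)),
\]
and the last product is precisely the killing weight of $S^\kappa$ appearing in~\eqref{def-HBK}. Summing over $n\ge 1$, the inner expectation becomes $H^B_K(b,a)/(1-\phit\adj(b))$ (the $n=0$ term is zero since $b\ne a$), and the factor $(1-\phit\adj(b))^{-1}$ cancels the $(1-\phit\adj(b))$ from the Markov decomposition, producing~\eqref{eq:pointBcap}.

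The main obstacle is the bookkeeping of killing weights under reversal: the forward product $\prod_{i=1}^{n-1}$ excludes \emph{both} endpoints of the path, whereas the $S^\kappa$-weight $\prod_{k=0}^{n-1}$ in~\eqref{def-HBK} excludes only the terminal state. The resulting mismatch produces exactly the factor $(1-\phit\adj(b))^{-1}$ needed to convert $(1-\phit\adj(b))\,{\bf e}_K(b)$ back to ${\bf e}_K(b)$; everything else reduces to routine applications of the Markov property and the spine construction of $\tbrw-{}$.
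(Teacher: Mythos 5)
Your proposal is correct and follows essentially the same route as the paper: reduce to the pointwise identity for each $a\in K$ via~\eqref{Bcap-eK}, express ${\bf e}_K(a)$ as the expectation of $\prod_{i\ge 1}(1-\phit\adj(S_i))$ along the spine, split at the first exit time of $B$ by the strong Markov property, and reverse the path using the symmetry of $\theta$ to recognize $H^B_K(b,a)$. The only difference is cosmetic bookkeeping of the endpoint factor $(1-\phit\adj(b))$, which you track explicitly and the paper absorbs by keeping the product up to $\tau_S(B)$ inclusive.
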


\begin{proof}  
Let $a\in K$. By construction, we have
\begin{eqnarray}     
  {\bf e}_K (a)
  &=&   \ee_a \Big[\prod_{i=1}^\infty (1- \phit\adj(S_i)) \Big] \nonumber
 \\
 &=& \sum_{b \not\in B} \ee_a \Big[\prod_{i=1}^{\tau_S(B)} (1-
     \phit\adj(S_i)) , S_{\tau_S(B)}=b \Big] \, {\bf e}_K (b) ,
     \label{eKab}
 \end{eqnarray} 
where we  used the  strong Markov  property of  $S$ for 
 the last equality.  For $b\not \in B$, we have
\begin{multline}
\ee_a \Big[\prod_{i=1}^{\tau_S(B)} (1- \phit\adj(S_i)) , S_{\tau_S(B)}=b
\Big]   \\
\begin{aligned}
   &= \sum_{n=1}^\infty \ee_a \Big[\prod_{i=1}^n (1- \phit\adj(S_i)) ,
   \tau_S(B)= n, S_n=b \Big] \\
  &=  \sum_{n=1}^\infty \ee_b \Big[\prod_{i=0}^{n-1} (1- \phit\adj(S_i))
  , \tau_S(B)\geq  n, S_n=a \Big]\\
  &=H^B_K(b, a),
\end{aligned}
 \end{multline}
 where we used the symmetry of $\theta$ for the second equality
 and~\eqref{def-HBK} for the last. 
Then use~\eqref{Bcap-eK} and~\eqref{eKab} to conclude. 
\end{proof}

\begin{lemma}\label{l:hbk} Let $d\geq 5$. Let $K\subset  B$ be two finite nonempty subsets of  $\z^d$ and $x\in B$.  We have
$$\sum_{a\not\in B}H^{B}_{K}(x,a) \ge  (1-\phit\adj(x))  {\bf e}_K (x). $$
\end{lemma}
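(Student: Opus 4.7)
The plan is to unfold both sides of the inequality as expectations with respect to the $\theta$-random walk $(S_n)$ started at $x$, then compare pointwise, using that each factor $1-\phit\adj(S_i)$ lies in $[0,1]$ and that $\tau_S(B)$ is $\pp_x$-a.s.\ finite (since $B$ is finite and $d\ge 5$ implies $S$ is transient).

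First, I would handle the left-hand side. Summing~\eqref{eq:HxBBc} over all $a\not\in B$ gives directly
\[
\sum_{a\not\in B}H^B_K(x,a)=\ee_x\Big[\prod_{i=0}^{\tau_S(B)-1}(1-\phit\adj(S_i))\Big].
\]
Next, I would handle the right-hand side. Recalling ${\bf e}_K(x)=1-\phit -(x)$ together with~\eqref{eq:pK-pKI}, we have ${\bf e}_K(x)=\ee_x[1-\phit\I(S_1)]$. By~\eqref{eq:q=gr} (or equivalently~\eqref{def-GK} combined with~\eqref{eq:q=gr1}), the quantity $1-\phit\I(y)$ is the probability that the killed walk $S^\kappa$ started at $y$ never gets killed, namely $\ee_y[\prod_{i=0}^\infty(1-\phit\adj(S_i))]$ (with $S_0=y$). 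Applying the Markov property of $S$ at time $1$ yields
\[
{\bf e}_K(x)=\ee_x\Big[\prod_{i=1}^\infty(1-\phit\adj(S_i))\Big],
\]
so that, since $1-\phit\adj(x)=1-\phit\adj(S_0)$ is deterministic under $\pp_x$,
\[
(1-\phit\adj(x))\,{\bf e}_K(x)=\ee_x\Big[\prod_{i=0}^\infty(1-\phit\adj(S_i))\Big].
\]

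Finally, I would conclude by a pointwise monotonicity argument: since $\tau_S(B)<\infty$ a.s.\ and each factor $1-\phit\adj(S_i)\in[0,1]$, truncating the product at $\tau_S(B)-1$ can only increase it, i.e.
\[
\prod_{i=0}^{\tau_S(B)-1}(1-\phit\adj(S_i))\;\ge\;\prod_{i=0}^\infty(1-\phit\adj(S_i))\quad\pp_x\text{-a.s.,}
\]
and taking expectations gives the stated inequality.

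I do not anticipate any serious obstacle: the content of the lemma is really that truncating the survival product of $S^\kappa$ at the exit time $\tau_S(B)$ overestimates the full survival probability. The only mild point to keep track of is the off-by-one shift between the $i=0$ and $i=1$ starting indices when rewriting ${\bf e}_K$ via the Markov property, which the above decomposition handles cleanly.
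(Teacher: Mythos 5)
Your proof is correct and is essentially the paper's argument in analytic form: the paper obtains the same inequality by writing $(1-\phit\adj(x))\,{\bf e}_K(x)=1-\phit \I(x)$ via \eqref{qrx} and bounding it by the probability that only the adjoint BRWs attached to the spine before time $\tau_S(B)$ miss $K$, which, conditionally on the spine, is exactly your truncated product $\prod_{i=0}^{\tau_S(B)-1}(1-\phit\adj(S_i))$. The only cosmetic difference is that you reach the identity $(1-\phit\adj(x))\,{\bf e}_K(x)=\ee_x\big[\prod_{i=0}^\infty(1-\phit\adj(S_i))\big]$ through \eqref{eq:pK-pKI} and \eqref{eq:q=gr} rather than directly from \eqref{qrx}, and you phrase the comparison as a product truncation for the killed walk $S^\kappa$ instead of an inclusion of events for the branching random walk.
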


\begin{proof}  
Recall the construction of $\ttree \I$ using the trees
$(\ttree{\text{adj}}^i)_{i\geq 0}$. We denote by  $\brwrange^{i}_{\text{adj}}$ 
 the range of the restriction of the  BRW $\tbrw I{}$ to $\ttree{\text{adj}}^i$. 
Thanks to~\eqref{qrx}, 
we have for $x\in B$
\begin{align*}
(1-\phit\adj(x))\,  {\bf e}_K (x)
=& 1-\phit I(x)\\
\le&\,
\P_x((\cup_{0\le i <\tau_S(B)}  \brwrange^{i}_{\text{adj}})\cap K=\emptyset) 
\\
=&
\sum_{a\not \in B} \P_x((\cup_{0\le i <\tau_S(B)}  \brwrange^{i}_{\text{adj}})\cap K=\emptyset, S_{\tau_S(B)}=a) 
\\
=& \sum_{a\not \in B} \ee_x \Big[ \prod_{i=0}^{\tau_S(B)-1} (1-\phit\adj(S_i)), S_{\tau_S(B)}=a  \Big]  
\\
=& \sum_{a\not\in B}H^{B}_{K}(x,a), \end{align*}

\noindent where the last equality follows from~\eqref{eq:HxBBc} and the
fact that $x\in B$. This ends the proof.
\end{proof}

\begin{figure}
\includegraphics[height=5cm]{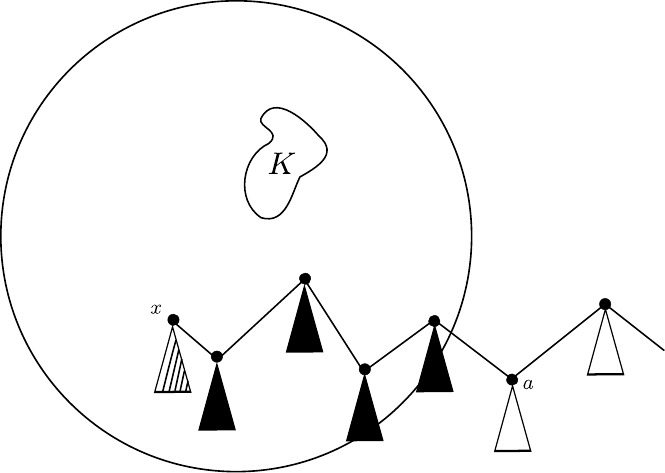}
\caption{\scriptsize
In this picture, we have a random walk starting at $x$ and exiting $B$ at the point $a$. On each point we attach an adjoint branching random walk, illustrated with the small triangles.
The quantity $\sum_{a\not\in B}H^{B}_{K}(x,a)$ is the probability that the shaded and solid triangles do not hit $K$,  ${\bf e}_K (x)=1-\phit-(x)=\P_x(\brwrange_-\cap K=\emptyset)$ is the probability that the solid and hollow triangles do not hit $K$,  $1-\phit\adj(x)=\P_x(\brwrange_{\text{adj}} \cap K=\emptyset)$ is the probability that the shaded triangle does not hit $K$, and  $1-\phit I(x)=\P_x(\brwrange_{\I}\cap K=\emptyset)$ is the probability that no triangle hits $K$.}
\label{fig:tree_xa}
\end{figure}

The main ingredient in the proof of Theorem \ref{theo:compareBcap} is the following estimate on the Green  function:

\begin{lemma}\label{lem:true_rs}
Let $d\ge 5$. Under \eqref{hyp-tree} and \eqref{hyp-brw}, there exists
some  constants $\alpha= \alpha(d), c=c(d)>0$ such that uniformly in $r\ge 1$,   $K \subset \ball(0,r)\cap\z^d$ nonempty, and   $x,y\in \z^d\backslash K$, we have 
\begin{align}
  \label{compareGkg}
  0\le 1-\frac{G_{K}(x,y)}{g(x,y)}\le c \, s^{-\alpha }
  \quad\text{with}\quad 
s=\frac{1}{r}\min\{|x|,|y|\}. 
\end{align}
\end{lemma}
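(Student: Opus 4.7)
The plan is first to observe the trivial direction and then to attack the upper bound through a resolvent-type identity. The lower bound $0\le 1-G_K(x,y)/g(x,y)$ holds termwise: each factor $1-\phit\adj(S_i)\in[0,1]$ appearing in~\eqref{def-GK} gives $G_K(x,y)\le g(x,y)$ at once when compared with $g(x,y)=\sum_n\pp_x(S_n=y)$. The content of the lemma is thus in the upper bound.

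For the upper bound, the first step is to establish the resolvent-type identity
\[
g(x,y)-G_K(x,y)=\sum_{z\in\z^d}G_K(x,z)\,\phit\adj(z)\,\bigl(g(z,y)-\ind_{\{z=y\}}\bigr),
\]
which I would prove by applying the telescoping $1-\prod_{i=0}^{n-1}(1-a_i)=\sum_{i=0}^{n-1}a_i\prod_{j<i}(1-a_j)$ to the killing factors in the difference $g-G_K$ and then using the Markov property of $(S_n)$ at step $i$. Since $\phit\adj(y)G_K(x,y)\ge 0$ and $G_K\le g$, this yields the workable inequality
\[
g(x,y)-G_K(x,y)\le\sum_{z\in\z^d}g(x,z)\,\phit\adj(z)\,g(z,y).
\]

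Since the target estimate is trivial when $s$ is bounded, I may assume $s$ is as large as needed, and, WLOG, $|x|\le|y|$, setting $R:=|x|$. The plan is then to split the sum over $z$ at the scale $\lambda r$. For $|z|\le\lambda r$ I would bound $\phit\adj(z)\le 1$ and use~\eqref{c_g} together with $|x-z|\gtrsim|x|$ and $|y-z|\gtrsim|y|$ (valid because $R\gg r$) to get $g(x,z)g(z,y)\lesssim|x|^{2-d}|y|^{2-d}$; a volume count then bounds this piece by $r^d|x|^{2-d}|y|^{2-d}$. For $|z|>\lambda r$ I would use the asymptotic $\phit\adj(z)\asymp|z|^{2-d}\Bcap(K)$ from Proposition~\ref{prop:4.1} together with the capacity upper bound~\eqref{upp-bcapK} to get $\phit\adj(z)\lesssim r^{d-4}|z|^{2-d}$, which reduces the task to estimating the triple Riesz-type lattice sum
\[
r^{d-4}\sum_{|z|>\lambda r}|x-z|^{2-d}\,|z|^{2-d}\,|z-y|^{2-d}.
\]
I would handle this via a dyadic decomposition of $\z^d$ into annuli $\{2^k\lambda r\le|z|<2^{k+1}\lambda r\}$, with sub-regions depending on whether $z$ lies close to $x$, close to $y$, or far from both, guided by the continuum Riesz-composition identity $\int|z|^{2-d}|z-w|^{2-d}\,\rd z\asymp|w|^{4-d}$ valid for $d\ge 5$. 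The upshot should be a bound of the form $\lesssim R^{4-d}|x-y|^{2-d}$ after summing across the regimes.

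Combining the two pieces and dividing by $g(x,y)\asymp|x-y|^{2-d}$ from~\eqref{c_g} then yields the desired inequality with $\alpha=d-4$ (any smaller positive exponent also suffices). The main obstacle will be the regime $|x-y|\ll R$, where $g(x,y)$ is of order $|x-y|^{2-d}$ and the continuum Riesz integral diverges near $z\in\{x,y\}$; there the discrete lattice cut-offs must be tracked carefully to ensure that the local contributions near $z=x$ and $z=y$ remain controlled by $R^{4-d}|x-y|^{2-d}$, which is the delicate technical step in the regime analysis.
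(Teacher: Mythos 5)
Your lower bound and the resolvent identity
\[
g(x,y)-G_K(x,y)=\sum_{z\in\z^d}G_K(x,z)\,\phit\adj(z)\,\bigl(g(z,y)-\ind_{\{z=y\}}\bigr)
\]
are correct, and the resulting reduction to $\sum_z g(x,z)\phit\adj(z)g(z,y)$ is a genuinely different route from the paper's (which truncates the Green function to paths that stay outside a ball $\ball(0,|x|s^{-\upsilon})$, so that the killing rate is uniformly small along every surviving path, and then handles $|y|\gg|x|$ by a first-entrance decomposition). However, your treatment of the region $|z|\le\lambda r$ has a genuine gap. The bound $r^d|x|^{2-d}|y|^{2-d}$ you obtain there from $\phit\adj(z)\le 1$ and a volume count is \emph{not} $O(s^{-\alpha})\,g(x,y)$ uniformly in $r\ge 1$: with $|x|\le|y|$, $|x|=rs$ and $|x-y|\asymp|y|$ (e.g.\ $y=-x$), one has $g(x,y)\asymp|y|^{2-d}$, so the ratio of your bound to $g(x,y)$ is of order $r^d|x|^{2-d}=r^2s^{2-d}$, which blows up as $r\to\infty$ with $s$ fixed. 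The loss is real and comes from replacing $G_K(x,z)$ by $g(x,z)$ near $K$: for $K=\ball(0,r)\cap\z^d$ one has $\sum_{z\in K}g(x,z)\asymp r^d|x|^{2-d}$, whereas $\sum_{z\in K}G_K(x,z)=\phit c(x)\asymp|x|^{2-d}\Bcap(K)\lesssim r^{d-4}|x|^{2-d}$ by \eqref{def:pKx}, \eqref{eq:killing_upperbound1} and \eqref{upp-bcapK} --- a discrepancy of order $r^4$.

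The near region can be repaired, but only by keeping $G_K(x,z)$ and invoking the a priori estimates already established: by \eqref{eq:q=gr1} and \eqref{eq:killing_upperbound3},
\[
\sum_{|z|\le\lambda r}G_K(x,z)\phit\adj(z)\le\phit I(x)\lesssim|x|^{4-d}\Bcap(K)\lesssim s^{4-d},
\]
while $g(z,y)\lesssim|y|^{2-d}\lesssim g(x,y)$ on that region; this yields the needed $s^{-(d-4)}g(x,y)$ and is not circular, since Lemma~\ref{lem:ZhuII6.2} does not use the present lemma. Two further remarks. First, your claimed bound $R^{4-d}|x-y|^{2-d}$ for the far region is too strong by a factor $r^{d-4}$: already the annulus $\lambda r<|z|\le|x|/2$ contributes $r^{d-4}|x|^{4-d}|y|^{2-d}=s^{4-d}|y|^{2-d}$; what one actually gets is $s^{4-d}|x-y|^{2-d}$, which still suffices. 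Second, the paper only obtains $\alpha=(d-4)/(d+2)$ in this lemma, so your target $\alpha=d-4$ would be an improvement if the repaired argument (including a careful dyadic treatment of the regime $|x-y|\ll|x|$) is carried out in full.
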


\begin{proof} 
  By definition $G_K(x,y)\le g(x,y)$, see~\eqref{def-GK}, 
  so it is enough to show that the upper bound \eqref{compareGkg} holds for all $s\ge s_0$ where $s_0=s_0(d)>1$ denotes a large constant whose value only depends on $d$.

 By using \eqref{eq:killing_upperbound1} with
  $\lambda=s_0$ there and the fact $\Bcap(K) \lesssim r^{d-4}$ (see
  \eqref{upp-bcapK}), we get that $\phit\adj(x) \lesssim s^{2-d} r^{-2}
  \le s^{2-d}$;  the same holds for $\phit\adj(y)$. Since $\theta$ is symmetric, we have  
\begin{equation*}  \frac{G_K(x, y)}{1- \phit\adj(x)} = \frac{G_K(y,
    x)}{1- \phit\adj(y)}, 
\end{equation*}  
so  that uniformly  in  $r, K,  x,  y$  such that  $s\ge  s_0$, we  have
$G_K(y,   x)=  G_K(x,   y)   (1+   O(s^{2-d}))$.   Therefore,   provided
$\alpha\leq d-2$, it is enough to prove the lemma for $|x|\le |y|$.

So we assume that $|x|\le |y|$ and thus  $r s =|x|$.
Moreover, let $\upsilon=\upsilon(d)\in (0, \frac{d-4}{d})$
 be a small constant whose value will be fixed later, see Remark \ref{rmk:alpha1}. We distinguish two cases depending on  $|x|$ and $|y|$: 
\begin{align}\label{eq:1.1_0a}
|x| \le |y|\le |x| s^{\upsilon},
\end{align}
and
\begin{align}\label{eq:1.1_1a}
|y|> |x|  s^{\upsilon}.
\end{align}

{\bf Case  \eqref{eq:1.1_0a}: $|x|\le  |y| \le |x|  s^{\upsilon}$.}  See
the picture on  the left in Fig.~\ref{fig:prop4.8}. We  first modify the
Green   function   $g(x,y)$   of    the   random   walk   $S$   (without
killing).   For simplicity we write
\[
  B:=\ball(0, |x| s^{-\upsilon})\quad\text{and}\quad
  \tauB:=\tau_S(B^c)= \inf\{ n\geq 1 \, \colon\, S_n\in B\}
\]
the first
  exit time of $B^c$, see~\eqref{eq:def-tau}. 
Let     \begin{equation}      \widetilde     g(x,     y):=
  \sum_{n=0}^{s^\upsilon    (1+|x-y|)^2}     \pp_x(S_n=y,    \tauB    >
  n).
  \label{g-tilde}
\end{equation}
We claim that
\begin{equation}
  0\le  1- \frac{\widetilde g(x, y)}{g(x,y)} =O(s^{-\upsilon (d-2)/2}).
  \label{compar-g-tilde}
\end{equation}

  Indeed, we have
  \begin{eqnarray}  g(x,y)- \widetilde g(x, y)
    &\le& 
\sum_{n\ge s^\upsilon (1+|x-y|)^2} \pp_x(S_n=y) + \sum_{n=0}^\infty
          \pp_x(S_n=y, \tauB \le  n) \nonumber 
\\
    &=: &\eqref{g-tilde2cas}_1+ \eqref{g-tilde2cas}_2.
          \label{g-tilde2cas}
  \end{eqnarray}

By the local limit theorem for the random walk $S$ (see
\cite[Proposition 2.4.4]{lawler2010random}), we have $\P_x(S_n=y)\lesssim
n^{-d/2}$ uniformly in $x,y \in \z^d$. This gives that
\begin{equation}
  \eqref{g-tilde2cas}_1 \lesssim  s^{-\upsilon (d-2)/2} (1+|x-y|)^{2-d}
  \lesssim  s^{-\upsilon (d-2)/2} g(x,y).
  \label{g-tilde2cas-1}
\end{equation}

For $\eqref{g-tilde2cas}_2$, we use the Markov property at $\tauB$
to  get that
\begin{equation}
  \eqref{g-tilde2cas}_2
  = \sum_{a\in B}   \sum_{j=0}^\infty \pp_x(\tauB=j, S_j=a) g(a, y) 
  \le   \max_{a\in B} g(a, y) \, \pp_x(\tauB<\infty).
  \label{g-tilde2cas-2} \end{equation}

Since $g$ is the Green function,  $(g(S_n))_{n\in \n}$ is a supermartingale. Thus as
$x\neq 0$, we get
using~\eqref{c_g} that
\[
  g(x)\ge\ee_x[\mathbf 1_{\{\tauB<\infty\}}g(S_{\tauB})]
  \geq  \P_x( \tauB <\infty ) \, \min _{z\in B} g(z)
  \gtrsim \pp_x(\tauB<\infty)|x|^{2-d}s^{-\upsilon(2-d)},
\]
hence
\begin{equation}  \pp_x(\tauB<\infty) \lesssim s^{- \upsilon(d-2)}.  \end{equation}

Note                                                                that
$\min_{a\in  B}   |a-y|  \ge   |y|-  |x|   s^{-\upsilon}  \ge   |y|  (1-
s^{-\upsilon}),$                                                   hence
$ \min_{a\in B}\frac{|a-y|}{1+|x-y|}\ge  \frac 13, $ for  all $s\ge s_0$
and $s_0$  large enough  (depending only  on $\upsilon$).   This implies
that   $\max_{a\in    B}   g(a,   y)    \lesssim   g(x,   y),    $   and
therefore  \begin{equation} \eqref{g-tilde2cas}_2  \lesssim s^{-\upsilon
    (d-2)} g(x,y).\end{equation}

\noindent This and \eqref{g-tilde2cas-1} imply the claim in \eqref{compar-g-tilde}.

Now we go back to the proof of  the upper bound \eqref{compareGkg}. By \eqref{def-GK}, we have 
\begin{eqnarray}  G_K(x, y)
  &\ge&
\sum_{n=0}^{s^\upsilon (1+|x-y|)^2} \ee_x
        \Big[\prod_{i=0}^{n-1}(1-\phit\adj(S_i)), S_n=y, \tauB > n\Big].
        \nonumber
    \end{eqnarray}

    \noindent  By \eqref{upp-bcapK}  and \eqref{eq:killing_upperbound1},
    we                            deduce                            that
    $\phit\adj(S_i)\lesssim  r^{d-4} |S_i|^{2-d}  \le r^{d-4}  |x|^{2-d}
    s^{\upsilon(d-2)}$     for     any     $i    <     \tauB$.     Since
    $|x-y|\le  2|y|\le  2 |x|  s^{\upsilon}$,  there  are some  positive
    constants  $C,  C'$  (depending  only  on $d$)  such  that  for  all
    $n\le s^\upsilon (1+|x-y|)^2$ and $s\ge s_0$, on $\{\tauB > n\}$,
\[
   \prod_{i=0}^{n-1}(1-\phit\adj(S_i)) \ge (1- C r^{d-4} |x|^{2-d}
   s^{\upsilon(d-2)})^{s^\upsilon (1+|x-y|)^2} \ge 1- C' s^{- (d-4) +
     \upsilon (d+1)}.
 \]
Using \eqref{compar-g-tilde}, we see that  
\begin{equation}\label{eq:alpha1}
\begin{aligned}
G_K(x, y) &\ge  (1- C' s^{- (d-4) + \upsilon (d+1)})\,\widetilde g(x,y) 
\\
&\ge  (1-C'' s^{- (d-4) + \upsilon (d+1) }- C'' s^{-   \upsilon (d-2)/2}) g(x,y).
\end{aligned}
\end{equation}
This proves \eqref{compareGkg} in   the case \eqref{eq:1.1_0a} with
$\alpha $ at most equal to
\[
  \alpha_1:=\min (d-4 -v (d+1), v(d-2)/2).
\]

\begin{figure}
\includegraphics[height=5cm]{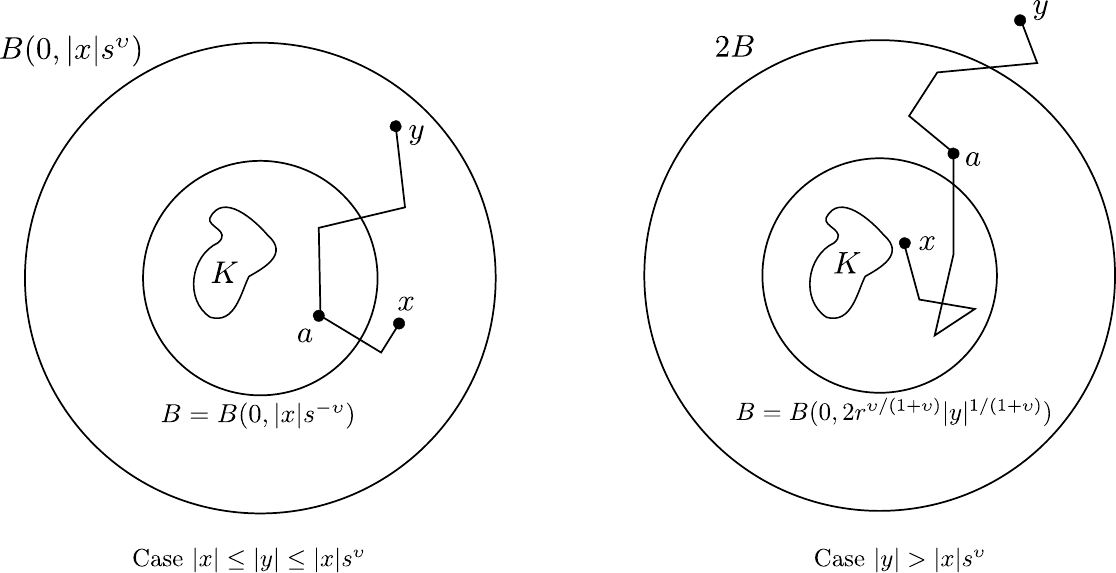}
\caption{\scriptsize
Relative positions of $a,x,y$ and $B$ in the proof of Lemma
\ref{lem:true_rs}. }
\label{fig:prop4.8}
\end{figure}

{\bf Case \eqref{eq:1.1_1a}: $|y|> |x| s^{\upsilon}$.} See the picture
on the right in Fig.~\ref{fig:prop4.8}.  We choose $R:= 2
r^{\upsilon/(1+\upsilon)}\, |y|^{1/(1+\upsilon)}$ and let $B:= \ball(0,
R)$, $2B=\ball(0, 2R)$. Note that $R \geq  2 r s=2|x|$. By taking $s\ge
s_0$ large enough (depending only on $v$), we have  for any
$a \in (2B)\backslash B$ that
\[
  |a| \le |y|\le |a| \pars*{\frac{|a|}{r}}^\upsilon.
\]
This means that  we may apply the already  proved case \eqref{eq:1.1_0a}
to $a$ and  $y$, and get from~\eqref{eq:alpha1} that  
\begin{equation*}
G_K(a, y) \ge (1- c' s^{-\alpha_1}) g(a,y) \ge  (1- c''
s^{-\alpha_1\wedge\upsilon}) g(x, y),  \qquad \forall a \in
(2B)\backslash B,
\end{equation*}

\noindent
where the last inequality follows from 
\eqref{c_g} and the fact that $\frac{|y|}{|x|} \ge 2\frac{|y|}{|a|}\ge
\frac{|y|}{R} \ge \frac1 2 s^{\upsilon}$
 so that  $|x-y|_\theta=|a-y|_\theta(1+ O(s^{-v}))$ and $|x-y|^{-1} +
|a-y|^{-1} \lesssim s^{-(v+1)}$.   Thus using \eqref{exit1}, we
get
\begin{equation}\label{eq:alpha2}
\begin{aligned}
G_{K}(x,y)
&=\sum_{a\not\in B}H^{B}_{K}(x,a)G_{K}(a,y)\\
&\ge \sum_{a\in (2B)\setminus B}H^{B}_{K}(x,a)G_{K}(a,y)\\
&\ge (1- c'' s^{-\alpha_1\wedge\upsilon}) g(x,y) \sum_{a\in (2B)\setminus B}H^{B}_{K}(x,a).
\end{aligned}
\end{equation}

The proof of \eqref{g-tilde2cas} will be complete once we show the existence of constants $\alpha_2 > 0$ and $c > 0$ such that  \begin{equation}    \sum_{a\in (2B)\setminus B}H^{B}_{K}(x,a) \ge 1- c s^{-\alpha_2}. \label{lower-HBK}\end{equation}
 
To get \eqref{lower-HBK}, we first deduce from Lemma \ref{l:hbk}  that
for all large $s\ge s_0$,
\begin{eqnarray}
  \sum_{a\not\in B}H^{B}_{K}(x,a)
  &\ge& (1-\phit\adj(x))  (1-\phit -(x))
\nonumber\\
&\ge&
 \pars*{1-C\frac{r^{d-4}}{(rs)^{d-2}}}\left({1-C\frac{r^{d-4}}{(rs)^{d-4}}}\right) 
\nonumber\\
&\ge& 1- C' s^{-(d-4)}, \label{sumnotB1}
\end{eqnarray}

\noindent where the second inequality follows from
\eqref{eq:killing_upperbound1}, \eqref{eq:killing_upperbound3}
and~\eqref{bcap>0}. Finally, by definition of $H^B_K$ in \eqref{def-HBK0} and
\eqref{hyp-brw}, we get
\begin{align*}    
  \sum_{a\not\in (2B)}H^{B}_{K}(x,a)
  &\le \sum_{a\not\in (2B)} \sum_{z\in B} \sum_{n=1}^\infty \P_x \Big(S^\kappa_n=a,  S^\kappa_{n-1} =z\Big)
  \\
   &\le
     \sum_{z\in
    B}\sum_{a\not\in(2B)}g(x,z)\theta(a-z) 
    \\
   &\lesssim R^{-(d-2)}\\
  &\le
  s^{-(d-2)}.
\end{align*}
Together  with  \eqref{sumnotB1},   this  imply  \eqref{lower-HBK}  with
$\alpha_2:=   d-4$   there.   This   completes   the   proof  of   Lemma
\ref{lem:true_rs}.
  \end{proof}

\begin{remark}\label{rmk:alpha1}
In the proof of Lemma \ref{lem:true_rs}, by \eqref{eq:alpha1}, \eqref{eq:alpha2} and \eqref{lower-HBK}, we need
\[
  \alpha\le
\min(\alpha_1, v, \alpha_2)=
  \min(d-4-\upsilon (d+1),\upsilon(d-2)/2,\upsilon,d-4).
\]
Hence the optimal choice for $\alpha$ in Lemma \ref{lem:true_rs} is
\[
\alpha=\upsilon=\frac{d-4}{d+2}\cdot
\]
\end{remark}

Now we are entitled to give the proof of Theorem  \ref{theo:compareBcap}.

\begin{proof}[Proof    of     Theorem    \ref{theo:compareBcap}]    Let
  $s:=    \frac{|x|}{r}\geq     \lambda$    and    fix     a    constant
  $\upsilon\in (0, \frac{d-4}{d})$.    By \eqref{eq:killing_upperbound1}, we may assume without loss of generality that $s\ge s_0$ with a large constant $s_0$ which only depends on $\lambda$ and $d$.     Let
  $R:= |x| s^{-\upsilon}=rs^{1-\upsilon}$ and $B:= \ball(0, R)$.
Notice that $ R <  |x|$ and $K \subset B$.
By \eqref{def:pKx} and \eqref{exit2}, we get
\begin{align}
  \P_x(\brwrange_c\cap K\neq \emptyset)
  =\phit c (x)
&=\sum_{a\in K} G_K(x, a) \nonumber
\\
&=\sum_{a\in K}\sum_{b\not\in B}G_K(x,b)H^B_K(b,a)
\nonumber
\\
&= (1+O(s^{-{(1-\upsilon)(d-4)}/{(d+2)}})) \sum_{a\in K}\sum_{b\not\in B} g(x,b)H^B_K(b,a), \label{sumKB1}
\end{align}
using  Lemma \ref{lem:true_rs} (with $s$
replaced by $s^{1-\upsilon}$) and Remark \ref{rmk:alpha1} for the last equality.

By \eqref{eq:killing_upperbound3}, uniformly in $b \not\in B$, 
\begin{align*}
\phit -(b) \lesssim \frac{\Bcap(K)}{R^{d-4}} \lesssim
  \Big(\frac{r}{R}\Big)^{d-4}
  = s^{-(1-\upsilon)(d-4)},
\end{align*}
which in view of Lemma \ref{l:Bcap-eK} yields that 
\begin{equation}  1 + O\big( s^{-(1-\upsilon)(d-4)}\big)=\frac{\Bcap(K)}{\sum_{a\in K}\sum_{b\not\in B}H^B_K(b,a)}\le 1 . \label{Bcap-HBK1}  
\end{equation}

Comparing \eqref{Bcap-HBK1}  with \eqref{sumKB1},  the proof  of Theorem
\ref{theo:compareBcap} reduces to  show that \begin{equation} \sum_{a\in
    K}\sum_{b\not\in  B} g(x,b)H^B_K(b,a)  = (1+  O(s^{-\alpha_3})) g(x)
  \sum_{a\in K}\sum_{b\not\in B} H^B_K(b,a), \label{gxb1} \end{equation}
for some constant $\alpha_3>0$.

To prove \eqref{gxb1}, we decompose both  sides of it into the sums over
$b \in  (2B)\backslash B$ and  $b\not\in (2B)$, with  $2B=\ball(0, 2R)$,
and  show that  those over  $b\not\in (2B)$  are negligible.  Indeed, by
\eqref{c_g},  uniformly  in  $x$,  $r$ and  $b  \in  (2B)\backslash  B$,
$g(x, b)= g(x) (1+ O(s^{-\upsilon}))$ as $\upsilon\leq 1$. Then, we have
\begin{equation}   \sum_{a\in K}\sum_{b\in (2B)\backslash B} g(x,b)H^B_K(b,a) =g(x) (1+ O(s^{-\upsilon}))\sum_{a\in K}\sum_{b\in (2B)\backslash B}  H^B_K(b,a).  \label{eq:b<2B}\end{equation}

Recall that for any $a\in K$,  $\Bcap(K)\ge
  \Bcap(\{a\})=\Bcap(\{0\})>0$.  
By \eqref{Bcap-HBK1} and \eqref{eq:b<2B}, \eqref{gxb1} follows once we establish the following two claims: 
\begin{align}
  \sum_{a\in K}\sum_{b\not\in (2B)}  H^B_K(b,a)
  & = O(s^{-\alpha_3})   ,
    \label{sumH>2B} \\
  \sum_{a\in K}\sum_{b\not\in (2B)} g(x,b)H^B_K(b,a)
  &= O(s^{-\alpha_3}) g(x)  .
    \label{sumgH>2B} 
\end{align}

\begin{figure}
\includegraphics[height=5cm]{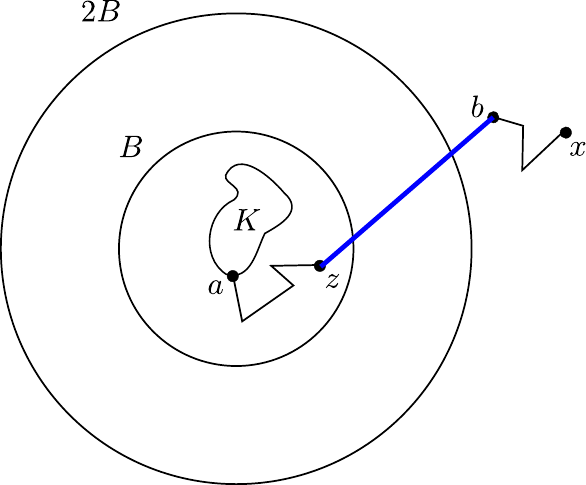}
\caption{\scriptsize
An illustration for the case $b\not\in (2B)$. Note that by definition of $ H^B_K(b,a)$ in \eqref{def-HBK0}, $S^\kappa_1$ makes a large jump (in blue) from  $b\not\in (2B)$ to some point $z\in B$,  so that $ H^B_K(b,a)\le \sum_{z\in B} \theta(z-b)G_K(z,a)$.}
\label{fig:prop1.1}
\end{figure}

Let $b\not\in (2B)$. To begin with, as illustrated in Fig.~\ref{fig:prop1.1}, we deduce from the definition of $H^B_K(b,a)$ in \eqref{def-HBK0} and  \eqref{def:pKx} that
  \begin{align}
    \sum_{a\in K}H^B_K(b,a)
    &\le \sum_{a\in K} \sum_{n=1}^\infty
      \P_b\Big(S^\kappa_n=a, S^\kappa_1\in B\Big) \nonumber
\\
    &=  \sum_{a\in K} \sum_{z\in B} \sum_{n=1}^\infty
      \P_b(S^\kappa_1=z) \, \P_z(S^\kappa_{n-1}=a) \nonumber
\\ 
&\le
\sum_{z\in B} \theta(z-b)\, \phit c(z). \label{eq:b_2B}
  \end{align}

\noindent This gives
\begin{equation}   \sum_{a\in K}\sum_{b\not\in (2B)}  H^B_K(b,a) 
\le
 \sum_{z\in B} \sum_{b\not\in (2B)}  \theta(z-b)\phit c(z).  \label{sumK2B-1}\end{equation}

We are going to show that 
\begin{equation} 
  \sum_{z\in B} \sum_{b\not\in (2B)}  \theta(z-b)\phit c(z)
  \le
  O(s^{- \alpha_4}),   \label{sumBpK22}  \end{equation}
  for some $\alpha_4\leq \alpha_3$, which in view of \eqref{sumK2B-1} yields \eqref{sumH>2B}.

 To show \eqref{sumBpK22}, we use  \eqref{hyp-brw} to see that uniformly in  $z\in B$,
\begin{align}\label{eq:d_moment}
\sum_{b\not\in (2B)} \theta(z-b)\lesssim R^{-d}.
\end{align}
It remains to show that
\begin{equation}
  R^{-d} \sum_{z\in B}    \phit c(z) = O(s^{-\alpha_4}).
  \label{sumBpK}
\end{equation}
To  this end,  we  decompose  $\sum_{z\in B}  $  in \eqref{sumBpK}  into
$\sum_{z\in              \ball(0,rs^{(1-\upsilon)/2})}$              and
$\sum_{z\in         B\setminus\ball(0,rs^{(1-\upsilon)/2})}$.        For
$|z| \ge rs^{(1-\upsilon)/2}$, we  use Proposition \ref{prop:4.1} to see
that    $\phit     c(z)    \lesssim    |z|^{2-d}     \Bcap(K)$.    Since
$K \subset \ball(0, r)$ and $\Bcap(\ball(0,r)) \lesssim r^{d-4}$, we get
that
\[
  \sum_{z\in B\setminus\ball(0,rs^{(1-\upsilon)/2})} \phit c(z)
\lesssim r^{d-4} \sum_{z\in B\setminus\ball(0,rs^{(1-\upsilon)/2})}
|z|^{2-d} \lesssim r^{d-4} R^2 = R^{d-2} s^{-(d-4)(1-\upsilon)}.
\]
For  the   sum  $\sum_{z\in  \ball(0,rs^{(1-\upsilon)/2})}$,   we  bound
$ \phit c(z)$ by $1$ and get that
\[ 
\sum_{z\in \ball(0,rs^{(1-\upsilon)/2})} \phit c(z) \lesssim r^d
s^{d(1-\upsilon)/2} = R^d s^{-d(1-\upsilon)/2},
\]
proving \eqref{sumBpK}, hence \eqref{sumBpK22}. The proof of
\eqref{sumH>2B} is   complete with $\alpha_3\leq  \alpha_4:=\min(d/2,
(d-4)(1-\upsilon))$. 

We now prove \eqref{sumgH>2B} in a similar way. By \eqref{eq:b_2B}, we
have
$$\sum_{a\in K}\sum_{b\not\in (2B)} g(x,b)H^B_K(b,a)
\le
\sum_{z\in B}\sum_{b\not\in (2B)}g(x,b)\theta(z-b)\phit c(z).
$$

\noindent Let $\eta>0$ be small (depending only on $d$). We decompose
the above sum $\sum_{b\not\in (2B)}$ into $\sum_{b\not\in (2B),  |b-x|\le
  |x| s^{-\eta}} $ and  $\sum_{b\not\in (2B),  |b-x|> |x| s^{-\eta}}
$. For $ |b-x|> |x| s^{-\eta}$, we have $g(x, b) \lesssim |x|^{2-d}
s^{\eta(d-2)} \lesssim g(x)s^{\eta(d-2)} $. This implies that
\begin{eqnarray}    
\sum_{z\in B}\sum_{\substack{b\not\in (2B)\\  |b-x|> |x| s^{-\eta}}}
  g(x,b)\theta(z-b)\phit c(z) 
&\lesssim&
g(x)s^{\eta(d-2)} \sum_{z\in B}\sum_{b\not\in (2B)}\theta(z-b)\phit c(z)\nonumber
\\
&\lesssim&
g(x)s^{\eta(d-2)- \alpha_4 } , \label{sumgH>2B1}
\end{eqnarray}

\noindent where the last inequality is due to \eqref{sumBpK22}.

For the sum $\sum_{b\not\in (2B),  |b-x|\le |x| s^{-\eta}} $, we bound
$\phit c(z)$ by $1$ and use $g(x, b) \lesssim (1+|x-b|)^{2-d}$. Then we
have
$$
  \sum_{z\in B} \sum_{\substack{b\not\in (2B)\\  |b-x|\leq   |x| s^{-\eta}}}
  g(x,b)\theta(z-b)\phit c(z) 
\lesssim   
\sum_{z\in B} \sum_{\substack{b\not\in (2B)\\  |b-x|\leq   |x|
    s^{-\eta}}}
(1+|x-b|)^{2-d}  \theta(z-b).
$$

\noindent Note that for any $ |b-x|\le |x| s^{-\eta}$ and $z\in B$, we
have $|z-b|\ge |x|/2$ (assuming $s_0^{-\upsilon} +s_0^{-\eta}\leq  1/2$), hence $\sum_{z\in B} \theta(z-b) \le {\mathbf
  P}_0(|S_1|\ge |x|/2) \lesssim |x|^{-d}$ by \eqref{hyp-brw}.  It
follows that
\begin{eqnarray*}
  \sum_{z\in B} \sum_{\substack{b\not\in
  (2B)\\  |b-x|\leq   |x| s^{-\eta}}}
  g(x,b)\theta(z-b)\phit c(z)
&\lesssim&
|x|^{-d} \sum_{|b-x|\le |x| s^{-\eta}} (1+|x-b|)^{2-d}  
\\
&\lesssim& 
|x|^{-d} \, (|x| s^{-\eta})^2
\\
&\lesssim& g(x) s^{-2 \eta}. \end{eqnarray*}

\noindent
Let $\eta:= \min(d/2, (d-4))\, (1-\upsilon)/d$. This combined with
\eqref{sumgH>2B1} yields \eqref{sumgH>2B} with
\begin{equation}\label{eq:alpha3}
\alpha_3:=\min (\alpha_4- \eta(d-2),  2\eta).
\end{equation}
This completes the proof of Theorem  \ref{theo:compareBcap}.
\end{proof}

\begin{remark}
By \eqref{sumKB1}, \eqref{eq:b<2B}, \eqref{sumBpK22} and \eqref{eq:alpha3},
we have
\[
  \alpha\le\min\pars*{
    \frac{(d-4)(1-\upsilon)}{d+2},
    (d-4)(1-\upsilon), 
    \upsilon,
    (1-\upsilon),
    \frac{2(d-4)(1-\upsilon)}{d}
 }
\]
for some $0<\upsilon<\frac{d-4}d$ (independent from the parameter
$\upsilon$ in Remark \ref{rmk:alpha1}). The optimal choice is 
\[
 \alpha=\upsilon=\frac{d-4}{2(d-1)}\cdot 
\]
\end{remark}

\subsection{Consequences of \texorpdfstring{Theorem  \ref{theo:compareBcap}}{}}
As a by-product, we can approximate $\Bcap(K)$ in terms of $\phit \alpha
(x)$ for $\alpha\in \{ \I, \adj, -\}$  with the same precision as in
Theorem \ref{theo:compareBcap}. These approximations hold  under an
additional third-moment assumption on the offspring distribution $\mu$. 

\begin{proposition}\label{c:compareBcap} Let $d\ge 5$ and
  $\lambda>1$. Assume \eqref{hyp-tree}, \eqref{hyp-brw} and that $\mu$
  has a finite third moment.   There exists a positive constant  $C=C(d,
  \lambda)$ such that, uniformly in $r\ge 1$, $K \subset
  \ball(0,r)\cap\z^d$ nonempty, $x\in \z^d$ such that
$|x|\ge \lambda r$,  we have, with $\alpha$ as in Theorem~\ref{theo:compareBcap},
\begin{equation*} 
\Big|\frac{ \phit\adj(x)}{g(x)} - \frac{\sigma^2}{2}\Bcap(K) \Big|  \le  C \pars*{\frac{r}{|x|}}^{\alpha}\Bcap(K).
\end{equation*}
\end{proposition}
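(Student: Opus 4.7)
The plan is to combine two ingredients: a generating function identity for the adjoint tree and the exact harmonicity of $g$ for the random walk $S$.

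First, decomposing $\ttree \adj$ at its root and using the independence of the subtrees rooted at its children, we obtain
\[
1-\phit\adj(x)= \widetilde G\bigl(1-\E_x[\phit c(S_1)]\bigr),
\]
where $\widetilde G(s)=\sum_{k\ge 0}\widetilde\mu(k) s^k$ is the generating function of $\widetilde\mu$. Using $\widetilde\mu(k)=\P(X>k)$ with $X\sim \mu$, a direct computation gives $\widetilde G'(1)=\tfrac12 \E[X(X-1)]=\sigma^2/2$ and $\widetilde G''(1)=\tfrac13\E[X(X-1)(X-2)]$, which is finite precisely under the additional third-moment assumption. A Taylor expansion of $\widetilde G$ at $1$ then yields
\[
\phit\adj(x)=\frac{\sigma^2}{2}\,\E_x[\phit c(S_1)]+R(x),\qquad |R(x)|\lesssim \bigl(\E_x[\phit c(S_1)]\bigr)^2.
\]

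Next, since $\theta$ is symmetric, the Green function $g$ is harmonic outside the origin: $\E_x[g(S_1)]=g(x)$ for every $x\neq 0$. Combined with Theorem \ref{theo:compareBcap}, this gives
\[
\E_x[\phit c(S_1)]=\Bcap(K)\,g(x)+\E_x\bigl[\phit c(S_1)-\Bcap(K) g(S_1)\bigr],
\]
and it remains to bound the residual expectation by $C (r/|x|)^\alpha g(x)\Bcap(K)$. Assume first $|x|\ge 2\lambda r$. On $\{|S_1|\ge\lambda r\}$, Theorem \ref{theo:compareBcap} gives the pointwise bound $|\phit c(S_1)-\Bcap(K)g(S_1)|\le C(r/|S_1|)^\alpha\Bcap(K)g(S_1)$; we split further into $\{|S_1|\ge|x|/2\}$, where $(r/|S_1|)^\alpha\asymp(r/|x|)^\alpha$ and the contribution is $\lesssim (r/|x|)^\alpha \E_x[g(S_1)]\Bcap(K)=(r/|x|)^\alpha g(x)\Bcap(K)$, and $\{\lambda r\le|S_1|<|x|/2\}$, which forces $|S_1-x|\ge|x|/2$ and hence has probability $\lesssim |x|^{-d}$ by the polynomial tail in \eqref{hyp-brw}. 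On the complementary event $\{|S_1|<\lambda r\}$, we use the crude bounds $\phit c(S_1)\le 1$, $g(S_1)\lesssim 1$, $\Bcap(K)\lesssim r^{d-4}$ from \eqref{upp-bcapK}, and again $\P_x(|S_1|<\lambda r)\lesssim |x|^{-d}$. A direct comparison with the target $(r/|x|)^\alpha g(x)\Bcap(K)\gtrsim (r/|x|)^\alpha|x|^{2-d}$ shows that both contributions are acceptable.

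Finally, to handle $R(x)$, Proposition \ref{prop:4.1} gives $\E_x[\phit c(S_1)]\lesssim g(x)\Bcap(K)$, which together with $g(x)\Bcap(K)\lesssim(r/|x|)^{d-4}$ (from \eqref{c_g}, \eqref{upp-bcapK}) implies $R(x)\lesssim (r/|x|)^{d-4}g(x)\Bcap(K)\le (r/|x|)^\alpha g(x)\Bcap(K)$ since $\alpha\le d-4$. The intermediate regime $\lambda r\le|x|<2\lambda r$, where the tail bounds above are not useful, is trivial: there $(r/|x|)^\alpha\asymp 1$, and by Proposition \ref{prop:4.1} both $\phit\adj(x)$ and $\tfrac{\sigma^2}{2}g(x)\Bcap(K)$ are $\asymp g(x)\Bcap(K)$, so their difference is automatically controlled with a possibly larger constant. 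The main difficulty lies in the bookkeeping of Steps 4--5: one must propagate the error bound from Theorem \ref{theo:compareBcap} through one walk step without losing the polynomial rate, and the crucial technical input is the \emph{exact} harmonicity $\E_x[g(S_1)]=g(x)$ (not merely an asymptotic equivalence), together with the tail condition in \eqref{hyp-brw} which ensures that the step where $S_1$ enters the ball of radius $\lambda r$ around $K$ contributes only an $O(|x|^{-d})$ correction.
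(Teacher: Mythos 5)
Your argument is essentially the paper's: the same root decomposition of $\ttree{\text{adj}}$ giving $\phit\adj(x)=1-\E\big[(1-\E_x[\phit c(S_1)])^{\widetilde X}\big]$ with $\widetilde X\sim\widetilde\mu$, the same second-order control of the generating function (the paper uses the elementary inequality $1-\widetilde X t\le(1-t)^{\widetilde X}\le 1-\widetilde X t+\widetilde X(\widetilde X-1)t^2$ rather than a Taylor expansion of $\widetilde G$, but it is the same estimate, and your identification $\widetilde G''(1)=\tfrac13\E[X(X-1)(X-2)]$ correctly locates where the third moment of $\mu$ enters), the same reliance on the exact harmonicity $\E_x[g(S_1)]=g(x)$ for $x\neq0$, and the same splitting of $\E_x[\phit c(S_1)-\Bcap(K)g(S_1)]$ into a main term and a large-jump term controlled by the polynomial tail in \eqref{hyp-brw}. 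The only structural difference is the cutoff: the paper splits on $\{|S_1-x|>s^{-\upsilon}|x|\}$ with an optimized $\upsilon$, while you split on $\{|S_1|\ge|x|/2\}$, $\{\lambda r\le|S_1|<|x|/2\}$ and $\{|S_1|<\lambda r\}$; both work, and yours reaches the rate $(r/|x|)^\alpha$ slightly more directly.

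There is one bookkeeping slip. On $\{|S_1|<\lambda r\}$ you bound the integrand by combining $\phit c(S_1)\le1$, $g(S_1)\lesssim1$ and $\Bcap(K)\lesssim r^{d-4}$, i.e.\ by $Cr^{d-4}$, and multiply by the probability $\lesssim|x|^{-d}$; comparing $r^{d-4}|x|^{-d}$ with the target $(r/|x|)^\alpha|x|^{2-d}$ (obtained from $\Bcap(K)\gtrsim1$) gives a ratio $r^{d-4-\alpha}|x|^{\alpha-2}$, which is \emph{unbounded} for $d\ge7$ when $|x|\asymp r\to\infty$. The repair is to keep $\Bcap(K)$ as a common factor on both sides instead of majorizing it by $r^{d-4}$: the integrand is $\lesssim\Bcap(K)$, since $\phit c\le1\lesssim\Bcap(\{0\})\le\Bcap(K)$ by \eqref{bcap>0} and $g$ is bounded, and then $|x|^{-d}\Bcap(K)\le(r/|x|)^\alpha|x|^{2-d}\Bcap(K)$ because $|x|^{-2}\le(r/|x|)^\alpha$ for $r\ge1$ and $\alpha\le2$. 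The same remark applies to the regime $\{\lambda r\le|S_1|<|x|/2\}$, where Theorem \ref{theo:compareBcap} bounds the integrand by $C\Bcap(K)g(S_1)\lesssim\Bcap(K)r^{2-d}$ and the comparison then closes at once. With this adjustment the proof is complete and matches the paper's.
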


\begin{proof}     Let    $s:=\frac{ |x|}{r}\geq    \lambda$.     By
  \eqref{eq:killing_upperbound1},   we  may   assume  without   loss  of
  generality that $s\ge  s_0$ with some large constant  $s_0$ which only
  depends on $d$ and $\lambda$.  Let $\widetilde X$ be distributed according to
  $\widetilde \mu$, that is, the number of children of the root in
  $\ttree {\text{adj}}$. Since $\mu$ has finite third moment we deduce
  that $\E[\widetilde X^2]\le \sum_{k\in \n} k^3 \mu(k)$ is finite. 
 By definition  of $\tbrw{\text{adj}}x$, we
have
\[
  \phit   \adj(x)   =   1-   \E\left[(1-   \E_x[\phit c(S_1)])^{\widetilde
      X}\right].
\]
Since  $1- \widetilde X t\leq  (1-t)^{\widetilde 
  X}\leq 1- \widetilde X t
+  \widetilde X(\widetilde X-1) t^2$  for $t\in [0, 1]$, we
deduce that
\begin{equation}    
0\le \E[\widetilde X] -\frac{\phit \adj(x)}{\E_x[\phit c(S_1)]} \le
\E[\widetilde X^2]
\, \E_x[\phit c(S_1)],
\qquad \forall \, x\in \z^d.
\label{rK-t(x)}
\end{equation} 
Since  $\E[\widetilde X]=\sigma^2/2$ and $g(x)=O(s^{2-d})$, the proof is complete once we
prove that for $|x|=rs$ and some $\alpha'<d-2$
\begin{equation}   
   \E_x[\phit c(S_1)]
   = g(x) \Bcap(K) \Big(1+ O\big(s^{-\alpha'}\big)\Big).
   \label{eq:t(x)}
 \end{equation}
We decompose the left hand side according to $|S_1-x|$ less or larger than $s^{-\upsilon} |x|$,
with $\upsilon\in (0,1)$. 
By \eqref{hyp-brw}, we get 
\begin{equation}
   \label{eq:a'-grand}
   \E_x[\phit c(S_1) 1_{\{|S_1-x|> s^{-\upsilon}|x|\}}]\leq   \P_x(|S_1-x|> s^{-\upsilon}|x|)
   \lesssim s^{\upsilon d } |x|^{-d}
    \lesssim   s^{\upsilon d - 2}  \, g(x).
 \end{equation} 
Under $\P_x$, on $\{|S_1-x|\le s^{-\upsilon} |x|\}$, we have $|S_1|\geq (1-
 s^{-\upsilon})|x|\geq \lambda' r$ for some $\lambda'>1$. Using  Theorem
 \ref{theo:compareBcap} and $\E_x[g(S_1)]=g(x)$ for $x\neq 0$, we get
\begin{multline*}
    \Big|\frac{\E_x[\phit c(S_1) 1_{\{|S_1-x|\leq s^{-\upsilon} |x|\}}]}{\Bcap(K)} -g(x) \Big|\\
\begin{aligned}
&=   \Big|\frac{\E_x[\phit c(S_1) 1_{\{|S_1-x|\leq s^{-\upsilon} |x|\}}]}{\Bcap(K)}
-\E_x[g(S_1) ] \Big|\\ 
  &\lesssim  \E_x[g(S_1)1_{\{|S_1-x|> s^{-\upsilon} |x|\}}] + 
    \E_x\big[g(S_1) \big(\frac{r}{|S_1|}\big)^\alpha 1_{\{|S_1-x|\leq
  s^{-\upsilon} |x|\}} \big ]\\
   &\lesssim   \P_x(|S_1-x|>s^{-\upsilon} |x|) +  s^{-\alpha} g(x)\\
  &\lesssim s^{-\alpha'}  g(x),
     \end{aligned}      
\end{multline*}
where we used~\eqref{eq:a'-grand}  and $\alpha'\leq \min (2-\upsilon d , \alpha)$  for the last
equality. Taking, $\upsilon$ such that $2 - \upsilon d= \alpha$, we
deduce that~\eqref{eq:t(x)} holds with $\alpha'=\alpha$.
\end{proof}

\begin{proposition}\label{c:compareBcap2}   Let $d\ge 5$ and
  $\lambda>1$. Assume \eqref{hyp-tree}, \eqref{hyp-brw} and that $\mu$
  has a finite third moment.   There exists a positive constant  $C=C(d,
  \lambda)$ such that, uniformly in $r\ge 1$, $K \subset
  \ball(0,r)\cap\z^d$ nonempty, $x\in \z^d$ such that
$|x|\ge \lambda r$,  we have, with $\alpha$ as in
Theorem~\ref{theo:compareBcap},

\begin{equation}
  \label{qK-G}
  \Big|\frac{\phit I(x)}{G(x)\Bcap(K)} - \frac{\sigma^2}{2} \Big|
  \le  C \pars*{\frac{r}{|x|}}^\alpha
  \quad\text{and}\quad
  \Big|\frac{\phit -(x)}{G(x)\Bcap(K)} - \frac{\sigma^2}{2} \Big|
  \le  C \pars*{\frac{r}{|x|}}^\alpha, 
\end{equation}
 where 
\[
G(x)=\sum_{y\in\z^d}g(x,y)g(y)\asymp (1+|x|)^{4-d}.
\]
\end{proposition}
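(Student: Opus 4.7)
The plan is to start from the representation $\phit I(x)=\sum_y G_K(x,y)\phit\adj(y)$ in~\eqref{eq:q=gr1} and to substitute the sharp approximations from Lemma~\ref{lem:true_rs} ($G_K\approx g$) and Proposition~\ref{c:compareBcap} ($\phit\adj\approx \tfrac{\sigma^2}{2}g\cdot\Bcap(K)$). The leading term becomes $\tfrac{\sigma^2}{2}\Bcap(K)\sum_y g(x,y)g(y)=\tfrac{\sigma^2}{2}\Bcap(K)G(x)$. The auxiliary asymptotic $G(x)\asymp(1+|x|)^{4-d}$ is obtained by splitting $\sum_y g(x,y)g(y)$ into the regions $|y|\le|x|/2$, $|y-x|\le|x|/2$, and the complement, each contributing $\asymp|x|^{4-d}$ in view of \eqref{c_g}.

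Fix $\lambda'\in(1,\lambda/2)$ and let $B=\ball(0,\lambda'r)$, so that $x\notin B$. For $y\notin B$ both approximations apply; using $\min(|x|,|y|)^{-\alpha}\le|x|^{-\alpha}+|y|^{-\alpha}$, the combined error is dominated by $r^\alpha\Bcap(K)\sum_y g(x,y)g(y)|y|^{-\alpha}$. One checks by the same three-region decomposition that $\sum_y g(x,y)g(y)|y|^{-\alpha}\lesssim |x|^{-\alpha}G(x)\asymp|x|^{4-d-\alpha}$, yielding an error $O((r/|x|)^\alpha G(x)\Bcap(K))$. The contribution from $y\in B$ is handled in three pieces: (i) for $y\in K$, \eqref{def:pKx} and Proposition~\ref{prop:4.1} give $\sum_{y\in K}G_K(x,y)\phit\adj(y)\le\phit c(x)\lesssim|x|^{2-d}\Bcap(K)$, and the elementary inequality $|x|^{2-d}\lesssim (r/|x|)^\alpha|x|^{4-d}$ (valid since $\alpha<2$ and $|x|,r\ge 1$) absorbs it into the error; (ii) for $y\in B\setminus K$, \eqref{compar-pKr_K} gives $\phit\adj\lesssim\phit c$, and Corollary~\ref{lem:gp_small} combined with Proposition~\ref{prop:4.1} yields
\[
\sum_{y\in B\setminus K}G_K(x,y)\phit\adj(y)\lesssim r\sqrt{\phit I(x)\phit c(x)}\lesssim r|x|^{3-d}\Bcap(K)=\tfrac{r}{|x|}|x|^{4-d}\Bcap(K),
\]
which is absorbed since $\alpha\le 1$; (iii) the ``continuous'' analogue $\sum_{y\in B}g(x,y)g(y)\lesssim r^2|x|^{2-d}\lesssim(r/|x|)^2 G(x)$ is likewise negligible. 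These estimates combine to yield the first half of~\eqref{qK-G}.

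For the second inequality, use $\phit-(x)=\E_x[\phit I(S_1)]$ from~\eqref{eq:pK-pKI}. The tail condition in~\eqref{hyp-brw} gives $\P_x(|S_1|<\lambda' r)\lesssim|x|^{-d}$, and since $\Bcap(K)\ge\Bcap(\{0\})>0$ this contribution is negligible. On $\{|S_1|\ge\lambda' r\}$, apply the estimate for $\phit I$ just proved (with $\lambda$ replaced by $\lambda'$): harmonicity of $g$ away from $0$ yields $\E_x[g(S_1-y)]=g(x-y)$ for $x\ne y$, so $\E_x[G(S_1)]=G(x)-g(x)$, and the correction $g(x)\lesssim|x|^{2-d}\lesssim(r/|x|)^\alpha G(x)$ is absorbed. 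The error from the $\phit I$ substitution is $\E_x[(r/|S_1|)^\alpha G(S_1);|S_1|\ge\lambda' r]\Bcap(K)\lesssim r^\alpha|x|^{4-d-\alpha}\Bcap(K)$ via a direct splitting of the expectation according to $|S_1-x|\le|x|/2$ versus the complement, using the finite $d$-th moment in~\eqref{hyp-brw}. This completes the proof.

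The main obstacle is the control of the near-$K$ contribution $\sum_{|y|<\lambda' r}G_K(x,y)\phit\adj(y)$. The crude bound $G_K\le g$ together with a volume estimate of $\sum_{|y|<\lambda' r}\phit\adj(y)$ is too weak, since the latter can be as large as $|K|\asymp r^d$ in the worst case (e.g.\ $K=\ball(0,r)\cap\z^d$), exceeding the desired $r^2\Bcap(K)$. The remedy is the combination $\phit\adj\lesssim\phit c$ followed by Corollary~\ref{lem:gp_small}, whose Cauchy--Schwarz-type improvement produces exactly the extra factor $r/|x|$ needed to beat the target error.
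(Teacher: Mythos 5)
Your proof of the first inequality in \eqref{qK-G} is essentially the paper's argument: the same representation \eqref{eq:q=gr1}, the same split of the sum at $\ball(0,\lambda' r)$, the substitution of Lemma~\ref{lem:true_rs} and Proposition~\ref{c:compareBcap} on the far region, and the combination of $\phit\adj\lesssim\phit c$ with Corollary~\ref{lem:gp_small} on the near region to produce the crucial factor $r|x|^{3-d}\Bcap(K)$. In fact you are slightly more careful than the paper in aggregating the far-region error: the pointwise error $(r/\min(|x|,|y|))^\alpha$ is not uniformly $O((r/|x|)^\alpha)$ near $|y|\asymp r$, and your bound $\sum_y g(x,y)g(y)|y|^{-\alpha}\lesssim |x|^{-\alpha}G(x)$ is exactly what is needed to justify the paper's ``$(1+O(s^{-\alpha}))$'' in the aggregate. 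Where you genuinely diverge is the second inequality: the paper deduces it in one line from the algebraic identity \eqref{qrx}, $\phit I=\phit\adj+(1-\phit\adj)\phit-$, together with $\phit\adj(x)/\phit I(x)\lesssim |x|^{-2}\lesssim (r/|x|)^\alpha$; you instead go through $\phit-(x)=\E_x[\phit I(S_1)]$, the exact identity $\E_x[G(S_1)]=G(x)-g(x)$ (valid by symmetry of $\theta$ and the resolvent equation for $g$), and a tail estimate from \eqref{hyp-brw}. Your route is correct but longer, and it requires applying the already-proved first inequality at the random point $S_1$ with a smaller aperture constant, which is legitimate since the statement holds for every $\lambda>1$. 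Two cosmetic points: the interval $(1,\lambda/2)$ for $\lambda'$ is empty when $\lambda\le 2$ (take $\lambda'=(1+\lambda)/2$ as in the paper, or reduce first to $|x|/r\ge s_0$ large via Proposition~\ref{prop:4.1}); and your separate treatment of $y\in K$ is unnecessary, since Corollary~\ref{lem:gp_small} already covers all of $B$.
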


We mention Schapira \cite[Lemma 2.5]{Schapira} for an estimate on
$\phit I(x)$ 
similar to \eqref{qK-G}, in the case when $\theta$ is  uniformly distributed among the $2d$ unit vectors in $\z^d$.

\begin{proof}  Let $s:= \frac{|x|}{r}\geq \lambda$. By \eqref{eq:killing_upperbound3}, we may assume without loss of generality that $s\ge s_0$ with some large constant $s_0$ which only depends on $d$ and  $\lambda$.  
Let $\lambda':= (1+\lambda)/2 >1$. By \eqref{eq:q=gr1}, we have
\begin{align*}
\phit I(x)
=&\sum_{y\in\z^d}G_K(x,y)\phit\adj(y)\\
=&\sum_{y\in\ball(0,\lambda' r)}G_K(x,y)\phit\adj(y)+\sum_{y\not\in\ball(0,\lambda' r)}G_K(x,y)\phit\adj(y).
\end{align*}

\noindent Using successivley \eqref{compar-pKr_K}, Corollary \ref{lem:gp_small}, \eqref{eq:killing_upperbound1} and \eqref{qKx:upp}, we have
\begin{eqnarray*}    \sum_{y\in\ball(0,\lambda' r)}G_K(x,y)\phit\adj(y)
&\lesssim &
\sum_{y\in\ball(0,\lambda' r)}G_K(x,y)\phit c(y)
\\
&\lesssim & r  \sqrt{\phit c(x) \phit I(x)}
\\
&\lesssim & r|x|^{3-d}\Bcap(K)=s^{-1}|x|^{4-d}\Bcap(K).
\end{eqnarray*}

For the sum $\sum_{y\not\in\ball(0,\lambda' r)}$, we deduce from Lemma \ref{lem:true_rs} and Proposition \ref{c:compareBcap} that
\begin{align*}
\sum_{y\not\in\ball(0,\lambda' r)}G_K(x,y)\phit\adj(y)
&=(1+O(s^{-\alpha}))\frac{\sigma^2}{2}\Bcap(K)\sum_{y\not\in \ball(0,r)}g(x,y)g(y)\\
&=(1+O(s^{-\alpha}+s^{-2}))\frac{\sigma^2}{2}\Bcap(K)G(x),
\end{align*}
where we used that
\[
G(x)-\sum_{y\not\in \ball(0,r)}g(x,y)g(y)
=\sum_{y\in \ball(0,r)}g(x,y)g(y)\lesssim |x|^{2-d}\sum_{y\in\ball(0,r)}g(y)\lesssim |x|^{4-d}s^{-2}.
\]
This proves the first part of~\eqref{qK-G}.

Finally, by \eqref{qrx} and the fact that $\frac{\phit\adj(x)}{\phit
  I(x)}\lesssim |x|^{-2}$, $\phit\adj(x) \lesssim g(x) \Bcap(K) \lesssim
|x|^{2-d} r^{d-4},$ we immediately conclude the second part
of~\eqref{qK-G} from  its first part. 
\end{proof}

{\bf Funding.}
Yueyun Hu was partially supported by ANR LOCAL.


\begin{thebibliography}{10}

\bibitem{asselah2023local}
A.~Asselah, B.~Schapira, and P.~Sousi.
\newblock Local times and capacity for transient branching random walks.
\newblock {\em arXiv preprint arXiv:2303.17572}, 2023.

\bibitem{AOSS}
A.~Asselah, I.~Okada, B.~Schapira, and P.~Sousi.
\newblock Branching random walks and Minkowski sum of random walks.
\newblock {\em arXiv preprint arXiv:2308.12948}, 2023.


\bibitem{bai2022convergence}
T.~Bai and Y.~Hu. 
\newblock Convergence in law for the capacity of the range of a critical
  branching random walk.
\newblock {\em Ann. Appl. Probab.} {\bf 33} 4964--4994,  2023.

\bibitem{bdh-5d}
T.~Bai, J.-F.~Delmas and Y.~Hu. Branching capacity of  a random walk in $\z^5$. \newblock {\em In preparation. }

\bibitem{BC-12} I.~Benjamini  and N.~Curien.   Recurrence of the $\z^d$-valued infinite snake via unimodularity. {\it Electron. Commun. Probab.} {\bf 17} 1--10, 2012.




\bibitem{dellacherie-meyer}
C.~Dellacherie and P.-A. Meyer.
\newblock {\em Probabilit\'{e}s et potentiel}, volume No. XV of {\em
  Publications de l'Institut de Math\'{e}matique de l'Universit\'{e} de
  Strasbourg}.
\newblock Hermann, Paris, 1975.
\newblock Chapitres I \`a IV.

\bibitem{Delmas99}
J.-F. Delmas.
\newblock Some properties of the range of super-{B}rownian motion.
\newblock {\em Probab. Theory Related Fields}, 114(4):505--547, 1999.

\bibitem{dlg1997}
J.-S. Dhersin and J.-F. Le~Gall.
\newblock Wiener's test for super-{B}rownian motion and the {B}rownian snake.
\newblock {\em Probab. Theory Related Fields}, 108(1):103--129, 1997.

\bibitem{dwass}
M.~Dwass.
\newblock The total progeny in a branching process and a related random walk.
\newblock {\em J. Appl. Probability}, 6:682--686, 1969.

\bibitem{dynkin91}
E.~B. Dynkin.
\newblock A probabilistic approach to one class of nonlinear differential
  equations.
\newblock {\em Probab. Theory Related Fields}, 89(1):89--115, 1991.

\bibitem{fremlin4}
D.~H. Fremlin.
\newblock {\em Measure theory. {V}ol. 4}.
\newblock Torres Fremlin, Colchester, 2006.
\newblock Topological measure spaces. Part I, II, Corrected second printing of
  the 2003 original.

\bibitem{iscoe}
I.~Iscoe.
\newblock On the supports of measure-valued critical branching {B}rownian
  motion.
\newblock {\em Ann. Probab.}, 16(1):200--221, 1988.

\bibitem{JM05}
S.~Janson and J.-F. Marckert.
\newblock Convergence of discrete snakes.
\newblock {\em J. Theor. Probab.}, 18(3):615--645, 2005.

\bibitem{landkof72}
N.~S. Landkof.
\newblock {\em Foundations of modern potential theory}, volume Band 180 of {\em
  Die Grundlehren der mathematischen Wissenschaften}.
\newblock Springer-Verlag, New York-Heidelberg, 1972.
\newblock Translated from the Russian by A. P. Doohovskoy.

\bibitem{lawler2010random}
G.~F. Lawler and V.~Limic.
\newblock {\em Random walk: a modern introduction}, volume 123.
\newblock Cambridge University Press, 2010.

\bibitem{LeGall1994}
J.-F. Le~Gall.
\newblock Hitting probabilities and potential theory for the {B}rownian
  path-valued process.
\newblock {\em Ann. Inst. Fourier (Grenoble)}, 44(1):277--306, 1994.


\bibitem{MR1341844}
J.-F. Le~Gall.
\newblock A path-valued {M}arkov process and its connections with partial
  differential equations.
\newblock In {\em First {E}uropean {C}ongress of {M}athematics, {V}ol. {II}
  ({P}aris, 1992)}, volume 120 of {\em Progr. Math.}, pages 185--212.
  Birkh\"{a}user, Basel, 1994.

\bibitem{LeGall1995}
J.-F. Le~Gall.
\newblock The {B}rownian snake and solutions of {$\Delta u=u^2$} in a domain.
\newblock {\em Probab. Theory Related Fields}, 102(3):393--432, 1995.

\bibitem{LeGall1999}
J.-F. Le~Gall.
\newblock {\em Spatial branching processes, random snakes and partial
  differential equations}.
\newblock Springer Science \& Business Media, 1999.

\bibitem{legall_lowdim}
J.-F. Le~Gall and S.~Lin.
\newblock The range of tree-indexed random walk in low dimensions.
\newblock {\em The Annals of Probability}, 43(5):2701--2728, 2015.


\bibitem{LeGall-Lin-range} J.-F. Le~Gall and S.~Lin.    The range of tree-indexed random walk. {\it J. Inst. Math. Jussieu}  {\bf 15} 271--317, 2016.


\bibitem{norberg}
T.~Norberg.
\newblock Random capacities and their distributions.
\newblock {\em Probab. Theory Relat. Fields}, 73(2):281--297, 1986.

\bibitem{Schapira}
B.~Schapira. 
\newblock Branching capacity of a random walk range.
\newblock {\em arXiv:2303.17830}, 2023.

\bibitem{MR1625467}
K.~Uchiyama.
\newblock Green's functions for random walks on {${\bf Z}^N$}.
\newblock {\em Proc. London Math. Soc. (3)}, 77(1):215--240, 1998.

\bibitem{zhu2016critical}
Q.~Zhu.
\newblock On the critical branching random walk i: branching capacity and
  visiting probability.
\newblock {\em arXiv preprint arXiv:1611.10324}, 2016.

\bibitem{zhu-2}
Q.~Zhu.
\newblock On the critical branching random walk ii: Branching capacity and
  branching recurrence.
\newblock {\em arXiv preprint arXiv:1612.00161}, 2016.

\bibitem{zhu2021critical}
Q.~Zhu.
\newblock On the critical branching random walk iii: The critical dimension.
\newblock In {\em Ann. Inst. Henri Poincar{\'e} (B)}, volume~57, pages 73--93,  2021.

\end{thebibliography}
\end{document}